\documentclass[12pt, twoside]{article}
\usepackage{amsmath,amsthm,amssymb,color}
\usepackage{times}
\usepackage{enumerate}

\usepackage{mathrsfs}
\usepackage{graphics}

\pagestyle{myheadings}
\def\titlerunning#1{\gdef\titrun{#1}}
\makeatletter
\def\author#1{\gdef\autrun{\def\and{\unskip, }#1}\gdef\@author{#1}}
\def\address#1{{\def\and{\\\hspace*{18pt}}\renewcommand{\thefootnote}{}%
\footnote {#1}}%
\markboth{\autrun}{\titrun}}
\makeatother
\def\email#1{e-mail: #1}
\def\subjclass#1{{\renewcommand{\thefootnote}{}%
\footnote{\emph{Mathematics Subject Classification (2010):} #1}}}

\newtheorem{thm}{Theorem}[section]
\newtheorem{cor}[thm]{Corollary}
\newtheorem{prop}[thm]{Proposition}

\theoremstyle{definition}
\newtheorem{defin}[thm]{Definition}
\newtheorem{rem}[thm]{Remark}
\newtheorem{exa}[thm]{Example}
\newtheorem{exas}[thm]{Examples}

\numberwithin{equation}{section}

\frenchspacing
\textwidth=15cm
\textheight=23cm
\parindent=16pt
\oddsidemargin=-0.5cm
\evensidemargin=-0.5cm
\topmargin=-0.5cm


\newcommand{\hah}{\hat h}
\newcommand{\hag}{\hat g}
\newcommand{\ol}{\bar}
\newcommand{\tq}{\tilde q}
\newcommand{\Ra}{\hbox{$\quad\Rightarrow\quad$}}
\newcommand{\ext}{\raise.5pt\hbox{$\bigwedge$}\kern-1pt}
\newcommand{\Gr}{\mathbb{G}\mathrm{r}}
\newcommand{\vep}{{\varepsilon}}
\newcommand{\hsr}{\hh\setminus\rr}
\newcommand{\ds}{\displaystyle}
\renewcommand{\Im}{\mathrm{Im}}
\newcommand{\Id}{\mathrm{id}}
\renewcommand{\Re}{\mathrm{Re}}
\newcommand{\cp}{\mathbb{CP}}
\newcommand{\hp}{\mathbb{HP}}
\newcommand{\hh}{\mathbb{H}}
\newcommand{\cc}{\mathbb{C}}
\renewcommand{\ll}{\mathbb{\ell}}
\newcommand{\mm}{m}
\newcommand{\nn}{\mathbb{N}}
\newcommand{\rr}{\mathbb{R}}
\newcommand{\zz}{\mathbb{Z}}
\newcommand{\s}{\mathbb{S}}
\newcommand{\jj}{\mathbb{J}}
\newcommand{\sK}{\mathscr{K}}
\newcommand{\sO}{\mathscr{O}}
\newcommand{\sQ}{\mathscr{Q}}
\newcommand{\sS}{\mathscr{S}}

\newcommand{\frs}[2]{\hbox{\large$\frac{#1}{#2}$}}
\renewcommand{\ge}{\geqslant}
\renewcommand{\le}{\leqslant}
\newcommand{\vv}{\mathbf{v}}
\newcommand{\ww}{\mathbf{w}}
\newcommand{\tF}{\mathcal{F}}

\newcommand{\pgamma}{\gamma}
\newcommand{\pGamma}{\Gamma}
\newcommand{\PGamma}{\hbox{\large$\mathfrak{G}$}}

\begin{document}

\renewcommand{\oddsidemargin}{20pt}
\renewcommand{\evensidemargin}{20pt}

\titlerunning{Twistor transforms of quaternionic functions}

\title{Twistor transforms of quaternionic functions\\
and orthogonal complex structures}

\author{Graziano Gentili \and Simon Salamon \and Caterina Stoppato}

\date{}

\maketitle

\address{G.~Gentili: Universit\`a di Firenze, Dipartimento di Matematica e Informatica``U. Dini'', Viale Morgagni 67/A, 50134 Firenze, Italy;\\
  \email{gentili@math.unifi.it} 
\and
S.~Salamon: Department of Mathematics, King's College London, Strand, London WC2R 2LS, UK;\\
\email{simon.salamon@kcl.ac.uk}
\and
C.~Stoppato: Istituto Nazionale di Alta Matematica, Unit\`a di Ricerca di Firenze c/o DiMaI ``U. Dini'' Universit\`a di Firenze, Viale Morgagni 67/A, 50134 Firenze, Italy;\\
\email{stoppato@math.unifi.it}}

\subjclass{53C28, 30G35; 53C55, 14J26}

\vspace{-200pt} 
\centerline{\color[rgb]{.5,.5,.5}
\sf To appear in the Journal of the European Mathematical Society}
\vspace{150pt}

\begin{abstract}
  The theory of slice-regular functions of a quaternion variable is
  applied to the study of orthogonal complex structures on domains
  $\Omega$ of $\rr^4$. When $\Omega$ is a symmetric slice domain, the
  twistor transform of such a function is a holomorphic curve in the
  Klein quadric. The case in which $\Omega$ is the complement of a
  parabola is studied in detail and described by a rational quartic
  surface in the twistor space $\cp^3$.
\end{abstract}

\section{Introduction}

It is well known that quaternions can be used to describe orthogonal
complex structures on $4$-dimensional Euclidean space, since their
imaginary units parametrize isomorphisms $\rr^4\cong\cc^2.$ The
resulting theory is invariant by the group $SO(5,1)$ of conformal
automorphisms of $S^4=\hp^1$, locally isomorphic to the group
$SL(2,\hh)$ of linear fractional transformations (see
section~\ref{induced}). Moreover, it crops up naturally in the
description of certain dense open subsets of $\rr^4=\hh$, which
possess complex structures that are induced from algebraic surfaces in
the twistor space $\cp^3$ that fibres over $S^4.$

We begin with the general definition of this class of structures.

\begin{defin}
  Let $(M^4, g)$ be a $4$-dimensional oriented Riemannian manifold. An
  \emph{almost complex structure} is an endomorphism $J : TM \to TM$
  satisfying $J^2 = -\Id$. It is said to be \emph{orthogonal} if it is
  an orthogonal transformation, that is, $g(J\vv,J\ww) = g(\vv,\ww)$
  for every $\vv,\ww \in T_pM$. In addition, we shall assume that such
  a $J$ preserves the orientation of $M$. An orthogonal almost complex
  structure is said to be an \emph{orthogonal complex structure},
  abbreviated to \emph{OCS}, if $J$ is integrable.
\end{defin}

Every element $I$ in the $2$-sphere
\begin{equation}\label{sph}
\s = \{q\in\hh:q^2 = -1\}
\end{equation} 
of imaginary unit quaternions can be regarded as a complex structure
on $\hh$ in a tautological fashion as follows. After identifying each
tangent space $T_p\hh$ with $\hh$ itself, we define the complex
structure by left multiplication by $I$, i.e.\ $J_p\vv=I\vv$ for all
$\vv\in T_p\hh\cong\hh$. Such a complex structure is called
\emph{constant} and it is clearly orthogonal: if we choose $I'\in \s$
such that $I\perp I'$ then $J_p$ is associated to the matrix
$$
\left(\begin{array}{cccc}
0 & -1 & 0 & 0\\
1 & 0 & 0 & 0\\
0 & 0 & 0 & -1\\
0 & 0 & 1 & 0
\end{array}\right)
$$ with respect to the basis $1,I,I',II'$.

Any OCS defined globally on $\hh\lower10pt\hbox{}$ is known to be
constant; see \cite{wood}. More generally,

\begin{thm} [{\rm\cite{viaclovsky}}] \label{Hau}
Let $J$ be an OCS of class $C^1$ on $\rr^4 \setminus \Lambda$, where
$\Lambda$ is a closed set having zero $1$-dimensional Hausdorff
measure: $\mathcal{H}^1(\Lambda) = 0$. Then either $J$ is constant or
$J$ can be maximally extended to the complement $\rr^4\setminus \{p\}$
of a point. In both cases, $J$ is the push-forward of the standard OCS
on $\rr^4$ under a conformal transformation.
\vphantom{\lower4pt\hbox{$p$}}
\end{thm}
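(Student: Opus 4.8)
The plan is to convert $J$ into a complex-analytic surface in the twistor space of $S^4$, extend it across $\Lambda$, and read off the dichotomy from an elementary intersection count. Since $\rr^4$, hence $S^4$, is conformally flat it is (anti-)self-dual, so by the theorem of Atiyah, Hitchin and Singer its twistor space is the complex $3$-fold $\cp^3$ with the standard twistor fibration $\pi\colon\cp^3\to\hp^1=S^4$ (see Section~\ref{induced}); its fibres $\ell_x=\pi^{-1}(x)$ are pairwise disjoint projective lines, namely the lines preserved by the fixed-point-free antiholomorphic involution $j$ of $\cp^3$. Under this correspondence an OCS $J$ on an open set $\Omega\subseteq S^4$ is the same as a holomorphic section $s\colon\Omega\to\cp^3$ of $\pi$, whose graph $\Sigma:=s(\Omega)$ is a complex-analytic surface in $\pi^{-1}(\Omega)$ meeting each fibre $\ell_x$, $x\in\Omega$, transversally in the single point $s(x)$. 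This persists at $C^1$ regularity: $\Sigma$ is a $C^1$ real $4$-submanifold of $\cp^3$ all of whose tangent spaces are complex subspaces --- which is precisely the integrability condition $N_J=0$ --- and such a submanifold is locally the graph of a $C^1$ function annihilated by $\bar\partial$, hence complex-analytic. With $\Omega=S^4\setminus(\Lambda\cup\{\infty\})$ we obtain such a $\Sigma$ inside $\cp^3\setminus A$, where $A:=\pi^{-1}(\Lambda\cup\{\infty\})$ is closed and $\Sigma$ is closed in $\cp^3\setminus A$.

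Because $\mathcal H^1(\Lambda)=0$ and the fibres of $\pi$ are $2$-spheres, a covering argument gives $\mathcal H^3(A)=0$; so Shiffman's removable-singularity theorem for analytic sets applies and $\overline\Sigma\subseteq\cp^3$ is a pure $2$-dimensional analytic subset, which is algebraic by Chow's theorem. Since $\rr^4\setminus\Lambda$ is connected, $\Sigma$ and hence $\overline\Sigma$ is irreducible; set $X:=\overline\Sigma$. By continuity of $s$, $\Sigma=X\setminus A$, so $\Sigma$ is an open submanifold of $X$, in particular contained in the smooth locus of $X$, and $\overline\Sigma\setminus\Sigma\subseteq A$. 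Fix any $x\in\rr^4\setminus\Lambda$. Since twistor lines are pairwise disjoint, $\ell_x$ is disjoint from every $\ell_y$ with $y\in\Lambda\cup\{\infty\}$, hence from $A$, so $X\cap\ell_x=\Sigma\cap\ell_x=\{s(x)\}$; moreover $\ell_x\not\subseteq X$, since otherwise $\ell_x\subseteq X\cap\pi^{-1}(\rr^4\setminus\Lambda)=\Sigma$, contradicting injectivity of $\pi|_\Sigma$. As $s$ is a holomorphic section, $X$ and $\ell_x$ meet transversally at the smooth point $s(x)$, hence with multiplicity $1$; by B\'ezout, $\deg X=1$, i.e.\ $X$ is a hyperplane $\cp^2\subset\cp^3$.

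No hyperplane of $\cp^3$ is $j$-invariant: such an $X$ would carry the fixed-point-free involution $j|_X$ of $\cp^2$, impossible as $\chi(\cp^2)=3$ is odd. Hence $X\cap j(X)$ is a single $j$-invariant line, i.e.\ a twistor line $\ell_{x_0}$, and it is the only twistor line on $X$. Thus $\pi|_X\colon\cp^2\to S^4$ collapses $\ell_{x_0}\cong\cp^1$ to $x_0$ and is elsewhere a biholomorphism $\cc^2=X\setminus\ell_{x_0}\to S^4\setminus\{x_0\}$; the OCS $\widetilde J$ it induces on $S^4\setminus\{x_0\}$ extends $J$ and does not extend across $x_0$, because $S^4$ carries no almost-complex structure. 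Since $S^4\setminus\{x_0\}$ is conformally equivalent to $\rr^4$, pulling $\widetilde J$ back by a conformal diffeomorphism $\rr^4\to S^4\setminus\{x_0\}$ yields an OCS on $\rr^4$, which by \cite{wood} is constant; hence $\widetilde J$, and therefore $J$, is the push-forward of the standard OCS on $\rr^4$ under a conformal transformation. Finally $x_0\in\Lambda\cup\{\infty\}$, since $\ell_y\not\subseteq X$ for every $y\in\rr^4\setminus\Lambda$. If $x_0=\infty$, then $\widetilde J$ is an OCS on all of $\rr^4$, hence constant by \cite{wood}; if $x_0=p\in\rr^4$, then, since a constant OCS extends over every point of $\rr^4$ while $\widetilde J$ does not extend over $p$, the maximal domain of $J$ is $\rr^4\setminus\{p\}$. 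This proves the theorem.

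The step needing the most care is the first one --- running the twistor correspondence under only the $C^1$ hypothesis, so as to be sure the lift $\Sigma$ is a genuine complex-analytic surface to which Shiffman's theorem applies. Once this and the identification of the twistor space of $S^4$ with $\cp^3$ are in hand, the remaining ingredients (the Hausdorff-measure estimate, Shiffman's and Chow's theorems, the B\'ezout count, and Wood's theorem) are short.
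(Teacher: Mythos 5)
This theorem is not proved in the paper at all: it is imported verbatim from \cite{viaclovsky}, the text only remarking that the tautological correspondence between OCS's and holomorphic graphs in $\cp^3$ ``underlies the proof''. Your reconstruction is essentially that argument, and it coincides in method with the paper's own proof of Theorem~\ref{nonex}: lift $J$ to a closed complex surface in $\cp^3\setminus\pi^{-1}(\Lambda\cup\{\infty\})$, use $\mathcal{H}^1(\Lambda)=0\Rightarrow\mathcal{H}^3(\pi^{-1}(\Lambda\cup\{\infty\}))=0$ to remove the singularity (Shiffman here, Bishop in the paper), invoke Chow/Mumford for algebraicity, and get degree one because a generic twistor line is met transversally in a single point, so the closure is a plane containing exactly one twistor fibre $\ell_{x_0}$ with $x_0\in\Lambda\cup\{\infty\}$, whence $J$ is conformally constant. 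The proposal is correct; the only step I would write out more explicitly is the maximality claim, where you should note that any OCS extending $J$ across $p$ must agree with $\widetilde J$ near $p$ (both are continuous and coincide on the dense set $\rr^4\setminus\Lambda$), so that gluing it to $\widetilde J$ really does produce an almost complex structure on all of $S^4$, giving the contradiction.
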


If the real axis $\rr$ is removed from $\hh$ then a totally different
structure $\jj$ can be constructed, thanks to the property of $\hh$
that we are about to describe. If, for all $I\in\s$, we set $L_I = \rr
+ I \rr$, then $L_I \cong \cc$ and
$$\hh = \bigcup_{I \in \s} L_I,\qquad \bigcap_{I \in \s} L_I = \rr.$$ In
fact, every $q \in \hh \setminus \rr$ can be uniquely expressed in
terms of its real part $\Re(q)$ and imaginary part $\Im(q) = q-\Re(q)$ as
$q= \Re(q)+I_q |\Im(q)|$ where
$$I_q = \frac{\Im(q)}{|\Im(q)|} \in \s.$$
For all $q \in \hh \setminus \rr$ and for all $\vv\in T_q(\hh \setminus \rr) \cong
\hh$, we now define $\jj_q\vv = I_q\vv$. 

Observe that $\jj$ is integrable. Indeed, when we express
\begin{equation}\label{product}
\hh \setminus \rr\ =\ \rr + \s \cdot
\rr^+\ \cong\ \mathbb{CP}^1\times \cc^+,
\end{equation}
as the product of the Riemann sphere and the upper half-plane
$\cc^+=\{x+iy \in \cc: y>0\}$, then $\jj$ is none other than the
product complex structure on the two factors. Furthermore, $\jj$ is
orthogonal: this fact can be proven reasoning pointwise.

In general, an orthogonal almost complex structure $J$ defined on an open subset
$\Omega$ of $\rr^4$ is simply a map $\Omega\to\s$. It is well known
that $J$ is integrable if and only if this map is holomorphic as a
mapping from the almost-complex manifold $(\Omega,J)$ to the Riemann
sphere (provided the orientation of the latter is chosen correctly,
see \cite{EellsSalamon}). It follows that $J$ is integrable if and
only if the associated section over $\Omega$ of the trivial bundle
$\rr^4\times\s$ is holomorphic for the natural twisted complex
structure on this product. This complex manifold can be compactified
(by adding a projective line $\cp^1$) to the ``twistor space''
$\cp^3$, which fibres over $\s^4=\rr^4\cup\{\infty\}$. In practice
then, an OCS $J$ on an open subset $\Omega$ of $S^4$ arises from a
\emph{holomorphic} submanifold of $\cp^3$
\cite{Salamon1985,deBartolomeisNannicini}. This tautological principle
underlies the proof of Theorem~\ref{Hau}, as well as our study of the
properties of $\jj$.
In section~\ref{induced}, we shall write down complex coordinates for
the OCS $\jj$, and use these in section~\ref{twist} to show that $\jj$
arises from a quadric in the twistor space $\cp^3$ (see
proposition~\ref{quadric}). This fact was the basis of the next
result.

\begin{thm} [{\rm\cite{viaclovsky}}] \label{thmjj}
Let $J$ be an OCS of class $C^1$ on $\rr^4\setminus \Lambda$ where
$\Lambda$ is a round circle or a straight line, and assume that $J$ is
not conformally equivalent to a constant OCS. Then $J$ is unique up to
sign, and $\rr^4\setminus \Lambda$ is a maximal domain of definition
for $J$.
\end{thm}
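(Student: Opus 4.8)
The plan is to put the removed set into standard position by a conformal change, transfer the problem to the twistor space $\cp^3$, and there identify the relevant complex surface with the quadric of proposition~\ref{quadric}.

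\emph{Reduction.} The conformal group of $S^4=\hp^1$, which acts as $SL(2,\hh)$ (see section~\ref{induced}), is transitive on the family of round circles and straight lines, so after composing $J$ with a suitable conformal transformation we may assume $\Lambda=\rr$; then $J$ is an OCS on $\hh\setminus\rr=S^4\setminus C$ with $C=\rr\cup\{\infty\}$. Both $\jj$ and $-\jj$ are OCS of this form --- replacing $J$ by $-J$ alters neither orthogonality, nor the orientation, nor the Nijenhuis tensor --- and neither is conformally equivalent to a constant OCS: by proposition~\ref{quadric} the twistor lift of $\jj$ is a smooth quadric $\sQ\subset\cp^3$, while a constant OCS lifts to a hyperplane $\cp^2\subset\cp^3$, and the conformal group acts on $\cp^3$ by projective transformations, hence preserves the degree of an algebraic surface. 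It thus remains to prove that any such $J$ coincides with $\pm\jj$ and that $\hh\setminus\rr$ is its maximal domain.

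\emph{Twistor lift and its extension.} By the tautological principle recalled in the introduction, integrability makes the graph $q\mapsto(q,J_q)\in S^4\times\s$ a holomorphic section of the twistor fibration $\pi:\cp^3\to S^4$ over $\hh\setminus\rr$, with image a smooth complex surface $S\subset\cp^3$. The essential point --- and, I expect, the main obstacle --- is to show that the closure $\ol S$ is a complex-algebraic surface $\Sigma$. Here the standard removable-singularity theorems for pure two-dimensional analytic sets (Bishop, Remmert--Stein, Shiffman) are not directly available, because the set across which one must extend, $\pi^{-1}(C)$, has real codimension only $3$ in $\cp^3$ and hence carries positive $\mathcal H^3$-measure. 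One has instead to argue locally near $\pi^{-1}(C)$: using the $C^1$-regularity of $J$ and conformal inversions carrying a point of $C$ to $\infty$, one extends $S$ holomorphically across a dense open part of $\pi^{-1}(C)$, and only afterwards removes the residual thin (complex-codimension $\ge 1$) set by Remmert--Stein, concluding algebraicity via Chow's theorem.

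\emph{Identification.} Let $\sigma$ be the fibrewise antipodal involution of $\cp^3$; it is antiholomorphic, fixed-point-free, covers the identity of $S^4$, and carries $S$ to the graph of $-J$, so $\sigma\Sigma$ is again an algebraic surface. If $\Sigma\neq\sigma\Sigma$, then $S$ is not contained in the curve $\Sigma\cap\sigma\Sigma$, and examining the intersection of $\Sigma$ with a generic twistor line forces $\deg\Sigma=1$; then $\Sigma$ would be a hyperplane and $J$ conformally equivalent to a constant OCS, against hypothesis. Hence $\Sigma=\sigma\Sigma$ is a real surface. A parity argument --- $\sigma$ permutes the sheets of the cover $\pi|_\Sigma$ without fixing any, having no fixed point on a fibre --- shows this cover has even degree, and the non-constancy hypothesis together with the fact that $C$ is only one-dimensional rules out everything but degree $2$ (this is where one follows the method of \cite{viaclovsky} most closely). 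Thus $\Sigma$ is a $\sigma$-invariant quadric; it must be irreducible and smooth, since $\sigma$-invariance would otherwise yield a linear ``sheet'' (making $J$ conformally constant) or a $\sigma$-fixed vertex. So $\Sigma\cong\cp^1\times\cp^1$, and $\Sigma\setminus\pi^{-1}(C)$ is the disjoint union of the graphs of $\pm J$, two connected $4$-manifolds; since removing a set of real codimension $\ge 2$ cannot disconnect a connected manifold, $\Sigma\cap\pi^{-1}(C)$ is $3$-dimensional, which --- a line meeting a quadric in at most two points unless contained in it --- forces $\ell_p\subset\Sigma$ for all $p\in C$. The $\sigma$-invariant twistor lines over $C$ then form a real circle of lines on $\Sigma$; classifying the free antiholomorphic involutions of $\cp^1\times\cp^1$, $\sigma$ acts as a conjugation on one ruling factor and as the antipodal map on the other, the fixed circle of the conjugation projecting onto $C$. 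With $C=\rr\cup\{\infty\}$ fixed this pins $\Sigma$ down to $\sQ$ up to interchanging the two hemispheres, i.e. up to the sign of the OCS, so $J=\pm\jj$. Finally, because $\Sigma$ contains the entire fibre over each point of $C$, no holomorphic sheet of $\Sigma$ lies over any point of $C$, so $J$ admits no holomorphic extension past $\hh\setminus\rr=\rr^4\setminus\Lambda$, which is therefore maximal.
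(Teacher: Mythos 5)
Your skeleton (reduce to $\Lambda=\rr$, lift the graph of $J$ to a surface $S\subset\cp^3\setminus\pi^{-1}(\rr\cup\{\infty\})$, extend, then identify with the quadric $\sQ$) points in the right direction, but the central step is a genuine gap, and in fact aims at a false statement. You propose to show that the closure $\overline S$ is a complex-algebraic surface $\Sigma$, by ``extending $S$ across a dense open part of $\pi^{-1}(C)$ using the $C^1$-regularity of $J$ and conformal inversions, then Remmert--Stein and Chow''. No mechanism is given for that extension, and none can be: for the structures $J=\pm\jj$, which are exactly the ones allowed by the hypotheses, the closure of the graph is $\cp^1\times\overline{\cc^+}$ inside $\sQ\cong\cp^1\times\cp^1$, i.e.\ half of the quadric together with the $3$-dimensional set $\pi^{-1}(\rr\cup\{\infty\})$ --- a manifold with boundary, not an analytic set. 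Your later identification step inherits this problem and is internally inconsistent: since the graph of a continuous $J$ is closed over its domain, $\overline S$ meets every twistor fibre over $\rr^4\setminus\Lambda$ in exactly one point, so your ``generic line'' argument would force $\deg\Sigma=1$ whether or not $\Sigma=\sigma\Sigma$, contradicting the quadric you want to end up with. (The parity/even-degree argument and the classification of free antiholomorphic involutions are therefore moot.)

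The proof in \cite{viaclovsky} --- mirrored by this paper's own proof of Theorem~\ref{nonex} with the quartic $\sK$ in place of the quadric --- avoids extending across $\pi^{-1}(C)$ altogether, precisely because Shiffman-type removability fails when the exceptional set has positive $\mathcal{H}^3$-measure. One uses the auxiliary \emph{algebraic} surface $\sQ\supset\pi^{-1}(\rr\cup\{\infty\})$ and argues by dichotomy: either $S\subset\sQ$, in which case $S$, being connected, is one of the two components of $\sQ\setminus\pi^{-1}(\rr\cup\{\infty\})$ and $J=\pm\jj$; or $S\cap\sQ$ has complex dimension at most $1$, and then $A=S\setminus\sQ$ is analytic in $\cp^3\setminus\sQ$ with $\overline A\cap\sQ\subset(S\cap\sQ)\cup\pi^{-1}(\rr\cup\{\infty\})$ of zero $\mathcal{H}^4$-measure, so Bishop's lemma applies (here the analyticity of the exceptional set $\sQ$ is what replaces the unavailable Shiffman statement), $\overline A=\overline S$ is analytic, hence algebraic, and the one-point intersection with generic twistor fibres gives degree $1$, i.e.\ $J$ conformally constant --- contradicting the hypothesis. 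Maximality is then elementary: $I_q$ has different limits along different rays approaching a point of $\rr$, so $\pm\jj$ admit no continuous (let alone complex) extension across any point of $\Lambda$; your argument via ``$\Sigma$ contains whole fibres over $C$'' gestures at this but again presupposes the nonexistent algebraic $\Sigma$.
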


\noindent Thus $\jj$ and $-\jj$ are the only non-constant OCS's on
$\hh \setminus \rr$. One might hope to achieve similar
characterisations of more complicated complex structures.\medbreak

For a closed subset $\Lambda$ of $\hh$ other than those described in
the previous theorems, it is in general an open question as to whether
OCS's exist on $\hh\setminus \Lambda$, let alone the task of
attempting their classification. No systematic methods were known for
addressing this problem, other than attempting to identify graphs in
the twistor space $\cp^3$ manufactured from some obvious algebraic
subsets.

In the present work, we solve such a problem working directly on $\hh$
by exhibiting a rather surprising link with the class of quaternionic
functions recently introduced in \cite{advances}. This theory is based
on the following notion of regularity.

\begin{defin}\label{reg}
  Let $\Omega$ be a domain in $\hh$ and let $f:\Omega \to \hh.$ For
  all $I \in \s$, let us denote $\Omega_I = \Omega \cap L_I$ and $f_I
  = f_{|_{\Omega_I}}$.  The function $f$ is called \emph{slice
    regular} if, for all $I \in \s$, the restriction $f_I$ is
  holomorphic, i.e.\ $\bar \partial_I f : \Omega_I \to \hh$ defined by
\[\bar \partial_I f (x+Iy) = {\textstyle\frac12}\!\left(
\frac{\partial}{\partial x}+I\frac{\partial}{\partial y} \right) f_I
(x+Iy).\]
vanishes identically.
\end{defin}

\noindent Throughout this paper we use ``regular'' to stand for
``slice regular'', see Remark~\ref{JQ}. Provided $\Omega$ satisfies
the additional properties of Definition~\ref{SSD}, regular functions
defined on $\Omega$ are necessarily real analytic, and they have some
nice properties in common with holomorphic functions of one complex
variable. We shall explain this in
section~\ref{preliminaries}.\smallbreak

Section~\ref{diff} is devoted to a detailed study of points where the
differential of a regular function $f$ is not invertible, and takes
account of a phenomenon whereby $f$ can be constant on isolated
$2$-spheres. Theorem~\ref{fully} fully describes the behaviour of $f$
in such cases.
The link between regular functions and OCS's, central to this paper,
is established in section~\ref{induced}. We discuss various
automorphisms of $\jj$ and the domain $\hsr$, and enumerate more
general classes of maps that preserve orthogonality.

Section~\ref{twist} contains a twistorial interpretation of the
quaternionic analysis, in which one passes from $\hp^1$ to $\cp^3$ in
order to render the theory complex algebraic. Theorem~\ref{lift}
asserts that a regular function $f$ lifts to a holomorphic mapping
from an open set of a quadric (the graph of $\jj$) in $\cp^3$ onto
another complex surface in $\cp^3$. This enables us to associate to
$f$ a holomorphic curve $\tF$ in the Klein quadric in $\cp^5$,
satisfying a reality condition, and the construction can be inverted
(see Theorem~\ref{transform}). The curve $\tF$ is the transform of the
title, and it encodes the action of $f$ on the family of spheres
$x+y\s$ symmetric with respect to the real axis. Properties of regular
functions can then be interpreted in natural holomorphic terms.

We illustrate our approach by describing OCS's induced by mapping the
real axis of $\hh$ onto a parabola $\pgamma$. Because of the
importance of this example, it effectively occupies the last two
sections. Section~\ref{removing} exploits the theory of the quadratic
polynomial $q^2+qi+c$ over $\hh$ and properties of its roots
(generically, there are two). In section~\ref{squartic}, we show that
the induced complex structures arise from a quartic surface $\sK$ in
$\cp^3$, whose singular points form three lines. It is one of a number
of quartic scrolls whose classification (described in \cite{dolg} and
references therein) might be relevant for the treatment of analogous
orthogonal complex structures. The map $f$ determines a resolution of
$\sK$ via Theorem~\ref{bir}, and the ``twistor geometry of conics''
could be further developed using Lie sphere techniques
\cite{shapirog}.

In \cite[Theorem 14.4 of v1]{viaclovsky}, it was shown that a
(non-singular) algebraic subvariety of $\cp^3$ of degree $d>2$ cannot
define a single-valued OCS unless one removes a set of real dimension
at least 3. By applying similar methods, we prove the non-existence of
an OCS on the complement $\hh\setminus \pgamma$ of the parabola (see
Theorem~\ref{nonex}). Indeed, in the geometry of $\hh\setminus\pgamma$
that we describe in this paper, one needs to remove a solid paraboloid
$\PGamma$ as well as $\pgamma$ in order to define an OCS.

\section{Factorisation of regular functions}\label{preliminaries}

Definition~\ref{reg} has its origin in work of Cullen
  \cite{cullen}. The class of regular functions includes every
  polynomial or power series of the form
$$f(q) = \sum_{n\in\nn} q^n a_n$$ on its ball of convergence $B(0,R) =
  \{q \in \hh : |q| <R\}$. Throughout this paper, $\nn$ denotes the
  set of natural numbers $0,1,2,\ldots$

Conversely, a regular function on a ball $B(0,R)$ is the sum of a
power series of the same type. Furthermore, the set of power series
converging in $B(0,R)$ is a ring with the usual addition operation $+$
and the multiplicative operation $*$ defined by
\begin{equation}\label{prodottostar} 
  \bigg(\sum_{n \in \nn} q^n a_n
  \bigg)*\bigg( \sum_{n \in \nn} q^n b_n \bigg) = \sum_{n \in \nn} q^n
  \sum_{k = 0}^n a_k b_{n-k}.
\end{equation} 
Recently, \cite{advancesrevised} identified a larger class of
domains on which the theory of regular functions is natural and not
limited to quaternionic power series.

\begin{defin}\label{SSD}
A domain $\Omega \subseteq \hh$ is called a \emph{slice domain} if
$\Omega_I = \Omega \cap L_I$ is an open connected subset of $L_I \cong
\cc$ for all $I \in \s$, and the intersection of $\Omega$ with the
real axis is non-empty. A slice domain $\Omega$ is a \emph{symmetric
  slice domain} if it is symmetric with respect to the real axis.
\end{defin}

\noindent The word \emph{symmetric} will refer to axial symmetry with
respect to $\rr$ throughout the paper. Recall the 2-sphere $\s$
defined in \eqref{sph}. If $\Omega$ is a symmetric slice domain then
$\Omega \setminus \rr$ can be expressed as
\begin{equation}\label{domain}
\Omega \setminus \rr =\bigcup_{x+iy\in V}(x+y\s)\ \cong\ \cp^1\times V
\end{equation}
relative to \eqref{product}, where $x+y\s = \{x+Iy : I \in \s\}$ and
$V$ is an open connected subset of $\cc^+$.\medbreak

As explained in \cite{advancesrevised}, regular functions on a
symmetric slice domain admit an identity principle, and they are
$C^\infty$. These properties are not granted for other types of
domains, see e.g.\ \cite{advancesrevised}.  Additionally, it was
proven in the same paper that the multiplicative operation $*$ can be
extended so that the set of regular functions on a symmetric slice
domain $\Omega$ is endowed with a ring structure. It turns out that
this algebraic structure is strictly related to the distribution of
the zeros (see \cite{zeros,zerosopen,milan}). In fact, the zero set of
a regular function on a symmetric slice domain consists of isolated
points or isolated $2$-spheres of type $x+y\s$, and can be studied as
follows.

\begin{thm}\label{factorization}
Let $f\not\equiv0$ be a regular function on a symmetric slice domain
$\Omega$, and let $x+y\s \subset\Omega$. There exist $m \in \nn$ and a
regular function $\tilde f:\Omega \to \hh$ not identically zero in
$x+y\s$ such that
\begin{equation*}
f(q) = \big[(q-x)^2+y^2\big]^m \tilde f(q).
\end{equation*}
If $\tilde f$ has a zero $p_1 \in x+y\s$ then such a zero is unique
and there exist $n \in \nn$, $p_2,...,p_n \in x+y\s$ (with $p_i \neq
\bar p_{i+1}$ for all $i \in \{1,\ldots,n-1\}$) such that
\begin{equation*}
\tilde f(q) = (q-p_1)*(q-p_2)*\cdots*(q-p_n)*g(q)
\end{equation*}
for some regular function $g:\Omega \to \hh$ which does not have zeros in $x+y\s$.
\end{thm}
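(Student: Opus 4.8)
The plan is to establish the two assertions in turn, relying on a few standard tools for slice regular functions on symmetric slice domains — the Splitting Lemma, the Representation Formula, the factor (remainder) theorem, and the description of the zero set of a $*$-product — all available in \cite{advances,advancesrevised,zeros}. Throughout, write $\Delta_{x,y}(q)=(q-x)^2+y^2$: this is the real polynomial whose zero locus is exactly the sphere $x+y\s$, and for any $p\in x+y\s$ one has $(q-p)*(q-\bar p)=\Delta_{x,y}(q)$. Since $\Delta_{x,y}$ has real coefficients it is central for $*$, so for every regular $h$ the product $\Delta_{x,y}*h=\Delta_{x,y}\,h$ is an ordinary pointwise product and is again regular; the same applies to $\Delta_{x,y}^{-1}$ wherever it is defined.

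\emph{Step 1 (the spherical factor).} Fix $I,J\in\s$ with $J\perp I$ and use the Splitting Lemma to write $f_I=F+GJ$ with $F,G\colon\Omega_I\to L_I$ holomorphic. By the Representation Formula the map $I'\mapsto f(x+I'y)$ is affine in $I'\in\s$, so $f$ vanishes identically on $x+y\s$ if and only if $f_I$ vanishes at the two points $x\pm Iy$, i.e.\ if and only if $(z-x)^2+y^2$ divides both $F$ and $G$ as holomorphic functions on $\Omega_I$. Let $m\in\nn$ be the largest integer for which $\Delta_{x,y}^{-m}f$ — regular, a priori, only off $x+y\s$ — extends to a regular function on $\Omega$: such an $m$ exists, as $m=0$ works and $m$ is bounded (otherwise $f$ would vanish to infinite order along $x+y\s$, hence $f\equiv0$ by the identity principle). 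Put $\tilde f:=\Delta_{x,y}^{-m}f$. Maximality of $m$ forces $\tilde f\not\equiv0$ on $x+y\s$: a regular function vanishing on all of $x+y\s$ is divisible by $\Delta_{x,y}$ by the slicewise criterion just recalled, and its quotient again extends regularly across the sphere, so $\tilde f\equiv0$ there would contradict the choice of $m$.

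\emph{Step 2 (the linear factors).} Suppose $\tilde f(p_1)=0$ with $p_1\in x+y\s$. Uniqueness of $p_1$ in $x+y\s$ is immediate: writing $\tilde f(x+I'y)=a+I'b$, a second zero $x+I''y$ with $I''\ne I'$ would give $(I'-I'')b=0$, hence $b=0$ and then $a=0$, so $\tilde f\equiv0$ on $x+y\s$, a contradiction. By the factor theorem, $\tilde f=(q-p_1)*g_1$ for some regular $g_1\colon\Omega\to\hh$, and $g_1\not\equiv0$ on $x+y\s$: otherwise, by the zero-set formula for $*$-products together with the fact that $q\mapsto(q-p_1)^{-1}q(q-p_1)$ preserves the sphere $x+y\s$, the product $(q-p_1)*g_1$ would vanish on all of $x+y\s$, against Step 1. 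Hence if $g_1$ has a zero in $x+y\s$ it is unique, say $p_2$, and $g_1=(q-p_2)*g_2$; iterating gives $\tilde f=(q-p_1)*\cdots*(q-p_k)*g_k$ with each $g_k$ regular on $\Omega$ and not identically zero on $x+y\s$. The process stops: the symmetrization $\tilde f^s=\tilde f*\tilde f^c$ (with $\tilde f^c$ the regular conjugate) has real coefficients, is multiplicative for $*$, and satisfies $(q-p_i)^s=\Delta_{x,y}$, so $\tilde f^s=\Delta_{x,y}^{k}\,g_k^s$; since $\tilde f\not\equiv0$ forces $\tilde f^s\not\equiv0$, and $\tilde f^s$ vanishes along $x+y\s$ only to finite order, $k$ is bounded. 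After finitely many steps, $n$ say, $g:=g_n$ has no zero in $x+y\s$, which is the asserted factorisation. Finally $p_i\ne\bar p_{i+1}$ for every $i$, for otherwise $(q-p_i)*(q-p_{i+1})=(q-p_i)*(q-\bar p_i)=\Delta_{x,y}$ would occur as a central $*$-factor of $g_{i-1}$, forcing $g_{i-1}\equiv0$ on $x+y\s$ against the previous sentence.

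The step I expect to be the real obstacle is the claim in Step 1 that $\Delta_{x,y}^{-m}f$, manifestly regular away from $x+y\s$ and holomorphically extendable along every slice $L_I$, genuinely assembles into a function regular across the whole sphere $x+y\s$. This removable-singularity phenomenon along a symmetric sphere is precisely what fails on a general slice domain, and it is here that the axial symmetry of $\Omega$ (through the Representation Formula) is essential; once it is in hand, the remainder is bookkeeping with the zero set of $*$-products and the finiteness supplied by the identity principle.
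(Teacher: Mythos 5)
Your argument is correct. Note that the paper does not prove Theorem~\ref{factorization} at all: it is quoted as a known result, with the proof deferred to the cited literature (\cite{zeros,zerosopen,milan}, building on \cite{advancesrevised}). Your proof is essentially the standard one from those sources: extraction of the spherical factor $\big[(q-x)^2+y^2\big]^m$ via the divisibility lemma (a regular function vanishing on $x+y\s$ is divisible by $(q-x)^2+y^2$ with regular quotient), uniqueness of the zero on a sphere from the affine dependence of $f(x+I'y)$ on $I'$, iterated use of the factor theorem for the linear terms, and termination via the symmetrization $\tilde f^s=\big[(q-x)^2+y^2\big]^k g_k^s$. You also correctly identify the one genuinely delicate point -- that the slicewise quotient reassembles into a regular function across the sphere, which is exactly where the symmetry of $\Omega$ and the Representation Formula enter -- and your sketch of it (holomorphic divisibility of the splitting components $F,G$ on $\Omega_I$, then extension by the Representation Formula) is the argument used in the literature, so there is no gap of substance.
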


\bigbreak

\begin{defin}
In the situation of Theorem \ref{factorization}, $f$ is said to have
\emph{spherical multiplicity} $2m$ at $x+y\s$ and \emph{isolated
  multiplicity} $n$ at $p_1$. Finally, the \emph{total multiplicity}
of $x+y\s$ for $f$ is defined as the sum $2m+n$.
\end{defin}

The meaning of these notions is clarified by the case of quadratic
polynomials, which will also prove useful in the sequel.

\begin{exa}\label{roots}
Let $\alpha,\beta\in \hh$ and set $P(q)=(q-\alpha)*(q-\beta)$.
\begin{enumerate}
\item If $\beta$ is not in the same sphere $x+y\s$ as $\alpha$ then $P$
  has two distinct roots, $\alpha$ and 
$(\alpha -\bar\beta)\beta (\alpha -\bar \beta)^{-1}$, 
each having isolated multiplicity $1$.
\item If $\alpha,\beta$ lie in the same sphere $x+y\s$ but $\alpha
  \neq \bar \beta$ then $\alpha=(\alpha -\bar \beta) \beta (\alpha
  -\bar \beta)^{-1}$ and the unique root $\alpha$ has isolated
  multiplicity $2$.
\item Finally, if $\alpha=\bar\beta\in x+y\s$ then the zero set of
  $P$ is $x+y\s$, which has spherical (and total) multiplicity $2$.
\end{enumerate}
Observe that, in 1., the second root $(\alpha -\bar\beta)\beta
  (\alpha -\bar \beta)^{-1}$ does not necessarily equal $\beta$ but it does lie in
  the same sphere as $\beta$.
\end{exa}

\begin{rem}\label{JQ}  The class characterised by
  Definition~\ref{reg} differs significantly from other classes of
  quaternionic functions introduced during the last century, in
  particular that arising from solutions of the Fueter equation,
  relevant to the study of maps between hypercomplex manifolds. (See
  \cite{chenli,librodaniele,sudbery} and references therein.) The
  class of Fueter regular functions has the advantage that it has an
  interpretation on any quaternionic manifold \cite{ssalamon}. It has
  a subtle but deep algebraic structure that was identified by Joyce
  \cite{joyce}, whose work was interpreted by Quillen
  \cite{quillen}. But the latter is less relevant to the theory of
  quaternionic polynomials, application of which is really the subject
  of this paper.
\end{rem} 

\section{The differential of a regular function}\label{diff}

In this section, we will prove some new results concerning the (real)
differential $f_*$ of a regular function $f$. We first recall two
theorems proven in \cite{expansion}.

\begin{thm}\label{series}
Let $f$ be a regular function on a symmetric slice domain $\Omega$,
and let $x_0,y_0 \in \rr$ be such that $x_0+y_0\s \subset \Omega$. For
all $q_0 \in x_0+y_0\s$, there exists a sequence $\{A_{n}\}_{n \in
  \nn}\subset \hh$ such that
\begin{equation}\label{expansion2}
f(q) = \sum_{n \in \nn}\big[(q-x_0)^2+y_0^2\big]^n\,[A_{2n} + (q-q_0)A_{2n+1}]
\end{equation}
in every
$U(x_0+y_0\s,R)=\{q\in\hh:|(q-x_0)^2+y_0^2|<R^2\}\subseteq\Omega.$
\end{thm}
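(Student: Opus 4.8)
The plan is to establish the expansion first on the slice $L_{I_0}$ through $q_0=x_0+I_0y_0$ (when $y_0\ne0$; if $y_0=0$ then $q_0=x_0\in\rr$ and the statement is the usual power series expansion of a regular function at a real point) by a Cauchy-integral computation, and then to propagate it over the whole neighbourhood $U:=U(x_0+y_0\s,R)$ using the representation formula, i.e.\ the fact that a regular function on a symmetric slice domain is a \emph{slice} function. Write $\Delta(q)=(q-x_0)^2+y_0^2$. This is a polynomial with real coefficients, so $\Delta(q)\in L_I$ whenever $q\in L_I$, and $|\Delta(x+Iy)|^2=[(x-x_0)^2-y^2+y_0^2]^2+4(x-x_0)^2y^2$ does not depend on $I$; hence $|\Delta|$ is constant on each sphere $x+y\s$, so $U$ is a union of such spheres (together with an interval of $\rr$ when $R>y_0$), it is bounded, and $U\cap L_{I_0}$ is the Cassini region $\{z\in L_{I_0}:|\Delta(z)|<R^2\}$, symmetric in $\rr$ and containing both $q_0$ and $\bar q_0$. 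We shall use the identities $\Delta(\xi)-\Delta(q)=(\xi-q)(\xi+q-2x_0)$ and $\xi+q-2x_0=(\xi-\bar q_0)+(q-q_0)$.

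\emph{Expansion on the slice.} Since $f_{I_0}$ is holomorphic on $\Omega_{I_0}\supseteq U\cap L_{I_0}$, for $q$ with $|\Delta(q)|<R^2$ and any $R'<R$ with $|\Delta(q)|<(R')^2$ the Cauchy formula on $L_{I_0}\cong\cc$ gives
\[ f_{I_0}(q)=\frac{1}{2\pi I_0}\oint_{\Gamma_{R'}}(\xi-q)^{-1}\,d\xi\;f_{I_0}(\xi),\qquad \Gamma_{R'}=\{\xi\in L_{I_0}:|\Delta(\xi)|=(R')^2\}, \]
a piecewise-smooth cycle (with one or two components) bounding a region relatively compact in $\Omega_{I_0}$. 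On $\Gamma_{R'}$ one has $|\Delta(q)|<(R')^2=|\Delta(\xi)|$, so $(\Delta(\xi)-\Delta(q))^{-1}=\sum_{n\ge0}\Delta(q)^n\Delta(\xi)^{-n-1}$; inserting this together with the two identities above yields
\[ (\xi-q)^{-1}=\sum_{n\ge0}\Delta(q)^n(\xi-\bar q_0)\,\Delta(\xi)^{-n-1}+\sum_{n\ge0}\Delta(q)^n(q-q_0)\,\Delta(\xi)^{-n-1}, \]
every factor produced from the kernel lying in $L_{I_0}$ and hence staying to the left of $f_{I_0}(\xi)$. Integrating term by term gives $f_{I_0}(q)=\sum_{n\ge0}\Delta(q)^n[A_{2n}+(q-q_0)A_{2n+1}]$ with
\[ A_{2n}=\frac{1}{2\pi I_0}\oint_{\Gamma_{R'}}(\xi-\bar q_0)\,\Delta(\xi)^{-n-1}\,d\xi\;f_{I_0}(\xi),\qquad A_{2n+1}=\frac{1}{2\pi I_0}\oint_{\Gamma_{R'}}\Delta(\xi)^{-n-1}\,d\xi\;f_{I_0}(\xi), \]
which by Cauchy's theorem are independent of $R'$; so the expansion holds on all of $U\cap L_{I_0}$. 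The bounds $|A_{2n}|,|A_{2n+1}|\le C(R')^{-2n-2}$, the constancy of $|\Delta|$ on spheres, and the boundedness of $q-q_0$ on $U$ then show that $S(q):=\sum_{n\ge0}\Delta(q)^n[A_{2n}+(q-q_0)A_{2n+1}]$ converges uniformly on compact subsets of $U$, hence defines a slice regular function there.

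\emph{Propagation over $U$.} It remains to check $S=f$ on all of $U$. Since $\Delta$ is slice preserving, its pointwise product with a slice function is again a slice function; $q\mapsto q-q_0$ is a slice function; and right multiplication by a constant preserves slice functions. Hence each partial sum of $S$ is a slice function, and therefore so is its locally uniform limit $S$. On the other hand $f$ is regular on the symmetric slice domain $\Omega\supseteq U$, hence a slice function on $\Omega$ and a fortiori on $U$. Two slice functions agreeing on $U\cap L_{I_0}$ agree on every sphere $x+y\s\subseteq U$, because the restriction of a slice function to such a sphere is determined by its values at the $\rr$-symmetric pair $x\pm I_0y$, which lie in $U\cap L_{I_0}$. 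As $U$ is the union of these spheres with at most a real interval and $U\setminus\rr$ is dense in $U$, continuity gives $S=f$ throughout $U$, completing the proof.

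The step requiring most care is the propagation. The neighbourhood $U(x_0+y_0\s,R)$ is in general \emph{not} a symmetric slice domain --- for $R\le y_0$ its slices are disconnected (two Cassini ovals) and $U$ misses the real axis entirely --- so one cannot simply invoke the identity principle for such domains; the resolution is that $f$ is \emph{already known} to be a slice function on the larger domain $\Omega$, and that the proposed series is visibly a locally uniform limit of polynomials, so that agreement on the single distinguished $\rr$-symmetric pair in each sphere of $U$ suffices. A secondary point, to be watched throughout the slice computation, is the noncommutative bookkeeping: all factors arising from the Cauchy kernel must be kept to the left of the $\hh$-valued $f_{I_0}(\xi)$; and one should note that $\Gamma_{R'}$ degenerates to a figure-eight exactly when $R'=y_0$, which is still an admissible cycle (and may be avoided by choosing $R'\ne y_0$).
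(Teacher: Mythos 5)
Your argument is correct. Note, however, that this paper does not prove Theorem~\ref{series} at all: it is recalled from the reference cited as \cite{expansion}, and the only trace of the original argument here is the remark (in the proof of Theorem~\ref{lift}) that convergence is obtained ``by the same estimates used in \cite{expansion}''. So there is no in-paper proof to compare line by line; what you have produced is a self-contained proof in the same spirit as the quoted source. Your two main devices are sound: the factorisation $\Delta(\xi)-\Delta(q)=(\xi-q)\bigl((\xi-\bar q_0)+(q-q_0)\bigr)$ of the Cauchy kernel inside the commutative plane $L_{I_0}$, with all kernel factors kept to the left of the $\hh$-valued $f_{I_0}(\xi)$, correctly produces coefficients independent of $R'$ and the expansion on the Cassini region of the slice; and the propagation step is exactly where care is needed, since $U(x_0+y_0\s,R)$ need not be a slice domain, and you resolve it correctly by using that $f$ is a slice function on the symmetric slice domain $\Omega$ (the representation formula of \cite{advancesrevised}, which you should cite explicitly) together with the fact that your partial sums $\Delta(q)^nA_{2n}+\Delta(q)^n(q-q_0)A_{2n+1}$ are polynomials with right quaternionic coefficients, hence slice functions, and that sliceness passes to locally uniform limits on the circular set $U$. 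Two minor points worth making explicit rather than implicit: for the Cauchy formula on $\{|\Delta|<(R')^2\}\cap L_{I_0}$ one should observe that each connected component of this sublevel set is simply connected (maximum principle), so the level curve is a legitimate bounding cycle and the contribution of the component not containing $q$ vanishes; and the independence of the $A_n$ from $R'$ (hence from $R$) follows from Cauchy's theorem applied between two level curves, which is what guarantees that a single sequence works ``in every'' admissible $U(x_0+y_0\s,R)$, as the statement requires. With those remarks added, your proof stands on its own.
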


Let $q_0=x_0+Iy_0$. We now see that regular functions are
  not merely holomorphic in the plane $L_I$, but that they have an
  analogous derivative perpendicular to $L_I$. The resulting formulae
  involve the first two coefficients (after $A_0$).

\begin{thm}
Let $f$ be a regular function on a symmetric slice domain $\Omega$,
let $q_0 \in \Omega$ and let $A_n$ be the coefficients of the
expansion \eqref{expansion2}. Then for all $v \in \hh$ with $|v|=1$
the derivative of $f$ along $v$ can be computed at $q_0$ as
\begin{equation}\label{differential}
\lim_{t\to 0}\frac{f(q_0+tv)-f(q_0)}{t} = vA_1 + (q_0v-v\bar q_0)A_2.
\end{equation}
\end{thm}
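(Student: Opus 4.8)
The plan is to compute the limit directly from the series expansion \eqref{expansion2}, evaluating the difference quotient term by term. Write $q_0 = x_0 + Iy_0$ and set, for brevity, $\Delta(q) = (q-x_0)^2 + y_0^2$, so that $\Delta(q_0) = (q_0-x_0)^2+y_0^2 = (Iy_0)^2+y_0^2 = 0$. The key algebraic fact is therefore that $\Delta$ vanishes at $q_0$, so in the expansion
\[
f(q) = \sum_{n \in \nn} \Delta(q)^n\,[A_{2n} + (q-q_0)A_{2n+1}],
\]
all terms with $n \ge 2$ will contribute $o(t)$ to $f(q_0+tv) - f(q_0)$, since $\Delta(q_0+tv)^n = O(t^n)$ for $n\ge 2$ (here one uses that $\Delta$ is a real-analytic, indeed polynomial, function of $q$, hence $\Delta(q_0+tv) = O(t)$ near the zero $q_0$). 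So only the $n=0$ and $n=1$ summands matter, and the problem reduces to differentiating
\[
A_0 + (q-q_0)A_1 + \Delta(q)\,[A_2 + (q-q_0)A_3]
\]
along $v$ at $q_0$.

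First I would handle the $n=0$ term: $\frac{d}{dt}\big|_{t=0}\big[A_0 + (tv)A_1\big] = vA_1$, which is immediate and accounts for the first summand on the right-hand side of \eqref{differential}. Next, for the $n=1$ term, since $(q_0+tv - q_0)A_3 = tvA_3 \to 0$, the factor $A_2 + (q-q_0)A_3$ tends to $A_2$, and it remains to compute $\lim_{t\to 0} \Delta(q_0+tv)/t$. This is the heart of the calculation: expand
\[
\Delta(q_0+tv) = (q_0 + tv - x_0)^2 + y_0^2 = (q_0-x_0)^2 + t(q_0-x_0)v + tv(q_0-x_0) + t^2v^2 + y_0^2,
\]
and using $(q_0-x_0)^2 + y_0^2 = 0$ together with $q_0 - x_0 = Iy_0$ one gets $\Delta(q_0+tv) = t\,(q_0 v - v\bar q_0) + t^2 v^2$, because $(q_0-x_0)v + v(q_0-x_0) = (q_0-x_0)v - v(\overline{q_0-x_0}) = q_0v - v\bar q_0$ (the real part $x_0$ cancels in $q_0v-v\bar q_0$ versus $(q_0-x_0)v - v(\overline{q_0-x_0})$). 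Hence $\lim_{t\to 0}\Delta(q_0+tv)/t = q_0v - v\bar q_0$, and the $n=1$ term contributes $(q_0v - v\bar q_0)A_2$.

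Finally I would combine the pieces and check the tail: the sum $\sum_{n\ge 2}\Delta(q_0+tv)^n[\cdots]$ divided by $t$ tends to $0$ because each term is $O(t^{n-1})$ with $n-1 \ge 1$, and the convergence of the series in a neighbourhood $U(x_0+y_0\s,R)$ is uniform enough (the coefficients $A_n$ grow at most geometrically, as in any power-series estimate) to justify interchanging the limit with the infinite sum. Adding the $n=0$ and $n=1$ contributions yields $vA_1 + (q_0v - v\bar q_0)A_2$, as claimed. The main obstacle is purely bookkeeping: making the term-by-term differentiation rigorous, i.e.\ justifying that the difference quotient of the full series converges to the sum of the difference quotients — this follows from the local uniform convergence of \eqref{expansion2} on $U(x_0+y_0\s,R)$ guaranteed by Theorem~\ref{series}, combined with the elementary estimate $|\Delta(q_0+tv)| \le C|t|$ for small $t$. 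No genuinely quaternionic subtlety arises beyond the non-commutativity already accounted for in the identity $(q_0-x_0)v + v(q_0-x_0) = q_0v - v\bar q_0$.
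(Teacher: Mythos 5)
Your proof is correct, and since the paper itself states this theorem without proof (it is recalled from the external reference on spherical series expansions), your direct term-by-term differentiation of \eqref{expansion2} is exactly the natural argument: the key identity $\big((q_0+tv)-x_0\big)^2+y_0^2=t(q_0v-v\bar q_0)+t^2v^2$, which rests on $(q_0-x_0)^2+y_0^2=0$ and $\overline{q_0-x_0}=-(q_0-x_0)$, is verified correctly, and it also covers the case $q_0\in\rr$, where both sides reduce to $vA_1$. Your handling of the tail is also sound, since $|\Delta(q_0+tv)|\le C|t|$ combined with the geometric bounds on $|A_n|$ underlying the convergence of \eqref{expansion2} dominates the difference quotients of the terms with $n\ge2$ by a convergent series that is $O(|t|)$, justifying the interchange of limit and sum.
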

\noindent For more details, we refer the reader to
  \cite{expansion}.\smallbreak

If we identify $T_{q_0}\hh$ with $\hh=L_{I} \oplus L_{I}^\perp$ then
for all $u \in L_I$ and $w \in L_I^\perp$,
\begin{equation}\label{LR}
(f_*)_{q_0}(u+w)=u (A_1 + 2\Im(q_0) A_2) + w A_1.
\end{equation}
In other words, {\it the differential $f_*$ at $q_0$ acts by right
  multiplication by $A_1 + 2\Im(q_0) A_2$ on $L_I$ and by right
  multiplication by $A_1$ on $L_{I}^\perp$.} 

We will now make use of \eqref{LR} to investigate the rank of $f_*$.

\begin{prop}\label{classification}
Let $f$ be a regular function on a symmetric slice domain $\Omega$,
let $q_0 = x_0+Iy_0 \in \Omega \setminus \rr$ and let $A_n$ be the
coefficients of the expansion \eqref{expansion2}.
\begin{itemize}
\item If $A_1=0$ then $f_*$ has rank $2$ at $q_0$ if $A_2 \ne0$, rank $0$ if
  $A_2 = 0$.
\item If $A_1 \ne0$ then either $f_*$ is invertible at $q_0$ or it has 
rank $2$ at $q_0$; the latter happens if and only if $1+2
  \Im(q_0) A_2A_1^{-1} \in L_{I}^\perp$.
\end{itemize}
Finally, for all $x_0 \in \Omega \cap \rr$, $f_*$ is invertible at
$x_0$ if $A_1\neq 0$; it has rank $0$ at $x_0$ if $A_1=0$.
\end{prop}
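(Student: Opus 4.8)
The plan is to work directly from the explicit formula \eqref{LR} for the differential, treating the three cases of the statement in turn. The key observation is that $(f_*)_{q_0}$ is described by right multiplication by $B := A_1 + 2\Im(q_0)A_2$ on $L_I$ and by right multiplication by $A_1$ on $L_I^\perp$, and that right multiplication by a nonzero quaternion is an invertible $\rr$-linear map of $\hh$ onto $\hh$ which sends each of the two-dimensional subspaces $L_I$ and $L_I^\perp$ to two-dimensional subspaces. In particular right multiplication by $A_1$ maps $L_I^\perp$ isomorphically onto $L_I^\perp A_1$, a $2$-plane, and this plane is either $L_I$ or $L_I^\perp$ or meets each of them only in $0$; but the only information we need is that $w\mapsto wA_1$ has rank $2$ when $A_1\neq 0$ and rank $0$ when $A_1=0$.

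First I would dispose of the case $A_1=0$. Then $f_*$ acts by $u\mapsto 2u\,\Im(q_0)A_2$ on $L_I$ and kills $L_I^\perp$. Since $\Im(q_0)\neq 0$ (because $q_0\notin\rr$), the map $u\mapsto u\,\Im(q_0)A_2$ on $L_I$ is zero if $A_2=0$ and is injective on the $2$-dimensional space $L_I$ if $A_2\neq 0$; hence $f_*$ has rank $0$ or rank $2$ accordingly. Next, the case $A_1\neq 0$: here $f_*$ restricted to $L_I^\perp$ already has rank $2$, so $f_*$ has rank $\ge 2$, and it is invertible precisely when its restriction to $L_I$ is injective, i.e.\ when $u\mapsto uB$ is injective on $L_I$ with image complementary to $L_I^\perp A_1 = L_I^\perp$. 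Writing $B = (1 + 2\Im(q_0)A_2A_1^{-1})A_1$, we see that $L_I B = L_I (1+2\Im(q_0)A_2A_1^{-1}) A_1$; since right multiplication by $A_1$ is a bijection of $\hh$ carrying $L_I^\perp$ to $L_I^\perp$, the image $L_I B$ equals $L_I^\perp$ if and only if $L_I(1+2\Im(q_0)A_2A_1^{-1}) = L_I^\perp$, which (since $L_I$ contains $1$) happens exactly when $1+2\Im(q_0)A_2A_1^{-1}\in L_I^\perp$ — using that $L_I\cdot\lambda$ for a quaternion $\lambda$ is a $2$-plane containing $\lambda$, so it equals $L_I^\perp$ iff $\lambda\in L_I^\perp$ and then automatically $I\lambda\in L_I^\perp$ as well because $L_I^\perp$ is a left $L_I$-module. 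Thus $f_*$ fails to be invertible iff $1+2\Im(q_0)A_2A_1^{-1}\in L_I^\perp$, and in that case its rank is exactly $2$ (the restriction to $L_I$ lands inside the image $L_I^\perp$ of the restriction to $L_I^\perp$, so the total rank does not exceed $2$; and it is at least $2$ since $A_1\neq 0$). Finally, for $x_0\in\Omega\cap\rr$ one has $\Im(q_0)=0$, so \eqref{LR} degenerates to $(f_*)_{x_0}(u+w) = (u+w)A_1$, i.e.\ $f_*$ is right multiplication by $A_1$ on all of $\hh$, which is invertible if $A_1\neq 0$ and zero if $A_1=0$.

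The main obstacle, and the only point requiring genuine care, is the linear-algebra lemma underlying the second bullet: that for $\lambda\in\hh$, the $\rr$-plane $L_I\lambda = \{(a+Ib)\lambda : a,b\in\rr\}$ satisfies $L_I\lambda = L_I^\perp$ if and only if $\lambda\in L_I^\perp$. One direction is immediate since $\lambda = 1\cdot\lambda\in L_I\lambda$. For the converse, one checks that $L_I^\perp$ is invariant under left multiplication by $I$ — indeed if $w\perp 1$ and $w\perp I$ then, using that left multiplication by $I$ is an orthogonal map fixing the plane $L_I$ setwise (it rotates it) and preserving orthogonal complements, $Iw\in L_I^\perp$ — so $\lambda\in L_I^\perp$ forces $I\lambda\in L_I^\perp$ and hence $L_I\lambda\subseteq L_I^\perp$; equality then follows by dimension count, both being $2$-dimensional (right multiplication by $\lambda\neq 0$ being injective). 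Assembling these observations with \eqref{LR} gives all four assertions of the proposition.
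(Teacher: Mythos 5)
Your plan --- read everything off \eqref{LR}, pull the factor $A_1$ out of $B:=A_1+2\Im(q_0)A_2$, and reduce invertibility to whether $p:=1+2\Im(q_0)A_2A_1^{-1}$ lies in $L_I^\perp$ --- is essentially the paper's, and your auxiliary lemma that $L_I\lambda=L_I^\perp$ if and only if $\lambda\in L_I^\perp$ is correct. However, the step you invoke twice in the case $A_1\ne0$ is false: right multiplication by $A_1$ does \emph{not} in general preserve $L_I^\perp$, so the identification $L_I^\perp A_1=L_I^\perp$ cannot be used. Concretely, for $I=i$ and $A_1=j$ one has $L_i^\perp A_1=(\rr j+\rr k)j=\rr+\rr i=L_i$; in fact $L_I^\perp A_1=L_I^\perp$ holds only when $A_1\in L_I$. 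This makes two of your intermediate assertions wrong as stated: with $A_2=0$, $A_1=j$, your criterion ``$f_*$ fails to be invertible iff $L_IB=L_I^\perp$'' gives the wrong verdict ($L_IB=L_i^\perp$, yet $f_*$ is right multiplication by $j$, which is invertible), and your claim ``$L_IB=L_I^\perp$ iff $L_Ip=L_I^\perp$'' is also false in this example ($L_Ip=L_i\ne L_i^\perp$ while $L_IB=L_i^\perp$). Your final characterisation comes out correct only because these two errors cancel.

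The repair is local and is exactly the paper's manoeuvre: write $(f_*)_{q_0}(u+w)=\bigl[u\,p+w\bigr]A_1$, i.e.\ $(f_*)_{q_0}=R_{A_1}\circ T$ where $R_{A_1}$ is the (bijective) right multiplication by $A_1$ and $T(u+w)=up+w$; then the rank and invertibility of $f_*$ coincide with those of $T$, and one compares $L_Ip$ with $L_I^\perp$ \emph{before} multiplying by $A_1$, rather than comparing $L_IB$ with $L_I^\perp A_1$ after. By your lemma (together with the left $L_I$-module property of $L_I^\perp$), either $p\notin L_I^\perp$, so $L_Ip\cap L_I^\perp=\{0\}$ and $T$, hence $f_*$, is invertible; or $p\in L_I^\perp$, in which case the image of $T$ is $L_I^\perp$ (kernel $L_I$ when $p=0$), so $f_*$ has rank exactly $2$. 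Your treatment of the cases $A_1=0$ and $x_0\in\Omega\cap\rr$ is fine and agrees with the paper.
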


\begin{proof}
If $A_1=0$ then $(f_*)_{q_0}(u+w)=u\,2\Im(q_0) A_2$ for all $u \in L_I,
w \in L_I^\perp$. Hence the kernel of $(f_*)_{q_0}$ is $L_I^\perp$ if
$A_2 \neq 0$, the whole space $\hh$ if $A_2 = 0$.

Let us now turn to the case $A_1 \ne0$ and observe that for all $u
\in L_I$ and $w \in L_I^\perp$,
$$(f_*)_{q_0}(u+w)=\left[u(1+2\Im(q_0) A_2 A_1^{-1})+w\right]\!A_1.$$
The differential $(f_*)_{q_0}$ is not invertible if and only if $1 +
2\Im(q_0) A_2 A_1^{-1} =p \in L_I^\perp$. In this case, if $p=0$ then
the kernel of $(f_*)_{q_0}$ is $L_I$; if $p \neq0$ then the kernel is
the $2$-plane of vectors $-wp^{-1} +w$ for $w \in L_I^\perp$.

The last statement is proved by observing that if $x_0 \in \Omega \cap
\rr$ then $(f_*)_{x_0}v = v A_1$ for all $v \in \hh$.
\end{proof}

Now let us study in detail the set
$$N_f = \{q \in \Omega : f_*\mathrm{\ is\ not\ invertible\ at\ }
q\},$$ which we may call the \emph{singular set} of $f$.  In this
study, we will make use of the following specific subset of $N_f$ (see
\cite{open,zerosopen}).

\begin{defin}
Let $\Omega$ be a symmetric slice domain and let $f : \Omega \to \hh$
be a regular function. The \emph{degenerate set} of $f$ is the union
$D_{f}$ of the 2-spheres $x+y\s$ (with $y \ne0$) such that
$f_{|_{x+y\s}}$ is constant.
\end{defin}

The following observation will also prove useful (see \cite{poincare}).

\begin{rem}\label{stereo}
For all $q_0 = x_0+Iy_0 \in \hh \setminus \rr$, setting $\Psi(q) =
(q-q_0)(q-\bar q_0)^{-1}$ defines a stereographic projection of
$x_0+y_0\s$ onto the plane $L_{I}^\perp$ from the point $\bar q_0$.
\end{rem}

Indeed, if we choose $J \in \s$ with $J\perp I$ and set $K = IJ$ then
for all $q=x_0+Ly_0$ with $L=tI+uJ+vK \in \s$ we have $\Psi(q) =
(L-I)(L+I)^{-1} = \frac{u+Iv}{1+t} K$ and $(\rr+I\rr)K = L_{I}^\perp$.
We are now in a position to characterise the points of the singular
set in algebraic terms.

\begin{prop}\label{characterization}
Let $f$ be a regular function on a symmetric slice domain $\Omega$,
let $q_0 = x_0+Iy_0 \in \Omega$. Then $f_*$ is not invertible at $q_0$
if, and only if, there exist $\widetilde q_0 \in x_0+y_0\s$ and a
regular function $g: \Omega \to \hh$ such that
\begin{equation}\label{algebraic}
f(q) = f(q_0) + (q-q_0)*(q-\widetilde q_0)*g(q).
\end{equation}
Equivalently, $f_*$ is not invertible at $q_0$ if, and only if, $f-f(q_0)$ 
has total multiplicity $n\ge2$ at $x_0+y_0\s$. Moreover,
$q_0$ belongs to the degenerate set $D_f$ if, and only if, it belongs
to the singular set $N_f$ and there exists a regular $g: \Omega \to
\hh$ such that equation \eqref{algebraic} holds for $\widetilde
q_0=\bar q_0$. The latter is equivalent to saying that $f-f(q_0)$ has
spherical multiplicity $n\ge2$ at $x_0+y_0\s$.
\end{prop}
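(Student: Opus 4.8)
The plan is to carry out everything in terms of the regular function $h:=f-f(q_0)$, which vanishes at $q_0\in x_0+y_0\s$ and, since $A_0=f(q_0)$, has by \eqref{expansion2} the expansion
$$h(q)=(q-q_0)A_1+\sum_{n\ge1}\big[(q-x_0)^2+y_0^2\big]^n\big[A_{2n}+(q-q_0)A_{2n+1}\big]$$
about $q_0$, with the same coefficients $A_n$ as $f$. Write $\sigma:=(q-x_0)^2+y_0^2=(q-q_0)*(q-\bar q_0)$. Since $h(q_0)=0$, Theorem~\ref{factorization} gives a factorisation $h=(q-q_0)*g_1$ with $g_1$ regular on $\Omega$; regrouping the expansion as $h=(q-q_0)A_1+\sigma*R$ with $R=\sum_{n\ge0}\sigma^n[A_{2n+2}+(q-q_0)A_{2n+3}]$ shows $g_1=A_1+(q-\bar q_0)*R$ near $q_0$. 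I would pivot every condition in the statement through the single assertion: \emph{$g_1$ has a zero on $x_0+y_0\s$}.

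First I would show that this assertion is equivalent both to \eqref{algebraic} (for some $\widetilde q_0\in x_0+y_0\s$ and regular $g$) and to $h$ having total multiplicity $\ge2$ at $x_0+y_0\s$. If $g_1(\widetilde q_0)=0$ for some $\widetilde q_0\in x_0+y_0\s$, then Theorem~\ref{factorization} applied to $g_1$ yields $g_1=(q-\widetilde q_0)*g$, hence $h=(q-q_0)*(q-\widetilde q_0)*g$; conversely, from such an identity for $h$ one cancels the left factor $(q-q_0)$ (not a zero-divisor in the $*$-algebra) to recover $g_1=(q-\widetilde q_0)*g$. That \eqref{algebraic} forces total multiplicity $\ge2$ follows because $(q-q_0)*(q-\widetilde q_0)$ has total multiplicity exactly $2$ at $x_0+y_0\s$ by Example~\ref{roots} (the whole sphere if $\widetilde q_0=\bar q_0$, otherwise a double isolated zero at $q_0$), together with additivity of total multiplicity under $*$; conversely, Theorem~\ref{factorization} produces a factorisation exhibiting \eqref{algebraic} whenever the total multiplicity is $\ge2$ (namely $h=(q-q_0)*(q-\bar q_0)*g$ when $\sigma\mid h$, and $h=(q-q_0)*(q-p_2)*\cdots$ otherwise, since the unique isolated zero of $h$ on the sphere is then $q_0$).

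Next I would match "$g_1$ has a zero on $x_0+y_0\s$" with "$f_*$ is not invertible at $q_0$". A short computation with the expansion shows that $g_1$ restricts on $x_0+y_0\s$ to the affine map $L\mapsto u+Lv$ with $v=y_0A_2$ and $u=A_1+\Im(q_0)A_2$, and that $g_1(q_0)=A_1+2\Im(q_0)A_2$, $g_1(\bar q_0)=A_1$. If $A_1=0$, then $g_1(\bar q_0)=0$, so $g_1$ has a zero on the sphere, and $f_*$ has rank $\le2$ by Proposition~\ref{classification}, so both sides hold. If $A_1\ne0$ and $A_2=0$, then $g_1\equiv A_1\ne0$ on the sphere (no zero) and $1+2\Im(q_0)A_2A_1^{-1}=1\notin L_I^\perp$, so $f_*$ is invertible and both sides fail. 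If $A_1\ne0$ and $A_2\ne0$, then $v\ne0$ and $g_1$ has a zero on the sphere iff $L_0:=-uv^{-1}\in\s$; substituting $u=-L_0v$ gives
$$g_1(q_0)\,g_1(\bar q_0)^{-1}=(I-L_0)v\,\big[-(L_0+I)v\big]^{-1}=(L_0-I)(L_0+I)^{-1},$$
while by construction $g_1(q_0)\,g_1(\bar q_0)^{-1}=(A_1+2\Im(q_0)A_2)A_1^{-1}=1+2\Im(q_0)A_2A_1^{-1}$, so by Proposition~\ref{classification} non-invertibility of $f_*$ amounts to $(L_0-I)(L_0+I)^{-1}\in L_I^\perp$. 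Since $L\mapsto(L-I)(L+I)^{-1}$ is injective (a fractional-linear map with invertible coefficient matrix) and its restriction to $\s$ is, by Remark~\ref{stereo}, a bijection onto $L_I^\perp\cup\{\infty\}$ carrying $-I$ to $\infty$, one gets $(L_0-I)(L_0+I)^{-1}\in L_I^\perp$ if and only if $L_0\in\s$, i.e.\ if and only if $g_1$ has a zero on the sphere. This last equivalence (converting the algebraic non-invertibility criterion of Proposition~\ref{classification} into a geometric statement about the cofactor $g_1$ through the stereographic projection of Remark~\ref{stereo}) is the crux of the proof. I expect the main obstacle to be organisational: keeping the $*$-products aligned with the pointwise values of $h$ and $g_1$ so that the Möbius/stereographic identification applies, and checking that the degenerate sub-cases ($A_2=0$; $L_0=-I$, which is impossible since it would force $g_1(\bar q_0)=0$; $\widetilde q_0=\bar q_0$) break no link of the chain.

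Finally, for the assertions about $D_f$, assume $q_0\notin\rr$. Then $q_0\in D_f$ means $h$ is constant, hence identically zero, on $x_0+y_0\s$; since $h$ restricts there to $L\mapsto(L-I)y_0A_1$, this holds iff $A_1=0$, iff $\sigma\mid h$, iff $h$ has spherical multiplicity $\ge2$, iff \eqref{algebraic} holds with $\widetilde q_0=\bar q_0$ (take $g=R$). Moreover $A_1=0$ forces $f_*$ to be non-invertible, so $q_0\in N_f$ automatically, which yields the form of the equivalence as stated. The case $q_0\in\rr$ of the first two assertions reduces directly to the last sentence of Proposition~\ref{classification} ($f_*$ non-invertible iff $A_1=0$ iff $h$ is $*$-divisible by $(q-q_0)*(q-q_0)$, the only available instance of \eqref{algebraic}, so that $h$ has total multiplicity $\ge2$ there), while the $D_f$-assertions are vacuous in that case since $D_f$ contains no real point.
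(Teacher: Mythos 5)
Your argument is correct and follows essentially the paper's own route: both rest on Proposition~\ref{classification} together with the stereographic Remark~\ref{stereo}, and your pivot ``$g_1$ vanishes at some $\widetilde q_0\in x_0+y_0\s$'' is exactly the paper's condition $A_1=(\bar q_0-\widetilde q_0)A_2$, since $g_1$ restricts to the sphere as $L\mapsto A_1+(L+I)y_0A_2$. The only divergence is the final leg, where the paper rearranges the expansion \eqref{expansion2} explicitly to exhibit the factor $g$ in \eqref{algebraic}, while you instead apply Theorem~\ref{factorization} to the cofactor $g_1$ and cancel $(q-q_0)$, leaning on additivity of total multiplicity (a standard fact from the literature, not proved in this paper) for the ``total multiplicity $\ge2$'' clause and on a local $g=R$ in the $D_f$ part where a global factor from Theorem~\ref{factorization} is what is really wanted --- harmless, slightly more abstract shortcuts.
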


\begin{proof}
If $q_0 \in \Omega \setminus \rr$ then it belongs to $D_f$ if and only
if, $f$ is constant on the $2$-sphere $x_0+y_0\s$, i.e. there exists a
regular function $g: \Omega \to \hh$ such that
$$f(q) = f(q_0) + [(q-x_0)^2+y_0^2]*g(q) = f(q_0) +(q-q_0)*(q-\bar
q_0)*g(q).$$ This happens if and only if the coefficient $A_1$ in the
expansion \eqref{expansion2} vanishes.

Now let us turn to the case $q_0 \in \Omega \setminus \rr$, $q_0 \not
\in D_f$. By proposition \ref{classification}, $q_0 \in N_f$ if and
only if $1 + 2\Im(q_0) A_2 A_1^{-1} = p \in L_I^\perp$. Thanks to the
previous remark, $p \in L_I^\perp$ if, and only if, there exists
$\widetilde q_0 \in (x_0+y_0\s)\setminus \{\bar q_0\}$ such that $p =
(\widetilde q_0-q_0)(\widetilde q_0-\bar q_0)^{-1}$. The last formula
is equivalent to
$$2\Im(q_0) A_2 A_1^{-1} = (\widetilde q_0-q_0 - \widetilde q_0+\bar
q_0) (\widetilde q_0-\bar q_0)^{-1} = -2\Im(q_0) (\widetilde q_0-\bar
q_0)^{-1}$$ i.e. $A_1 = (\bar q_0 - \widetilde q_0) A_2$. Finally,
the last equality is equivalent to
\[\begin{array}{rl}
f(q) \!\!&= A_0+ (q-q_0) (\bar q_0 - \widetilde q_0) A_2 +
\big[(q-x_0)^2+y_0^2\big] A_2 \\
&\hskip200pt +\big[(q-x_0)^2+y_0^2\big](q-q_0)*h(q)\\[3pt]
& =\ f(q_0) + (q-q_0)*\big[\bar q_0 - \widetilde q_0 + q-\bar q_0\big] A_2 
+ (q-q_0)*\big[(q-x_0)^2+y_0^2\big]*h(q)\\[3pt]
&=\ f(q_0) + (q-q_0)*(q- \widetilde q_0) A_2 + 
(q-q_0)*(q- \widetilde q_0)*(q- \overline{\widetilde q_0})*h(q)\\[3pt]
&=\ f(q_0) + (q-q_0)*(q- \widetilde q_0) *\big[A_2 
+(q- \overline{\widetilde q_0})*h(q)\big],
\end{array}\]
for some regular $h :\Omega \to \hh$.

To conclude, we observe that if $x_0 \in \Omega \cap \rr$ then $A_1
=0$ if and only if
$$f(q) = f(q_0) + (q-x_0)^2*g(q) = f(q_0) +(q-x_0)*(q-x_0)*g(q)$$
for some regular function $g: \Omega \to \hh$.
\end{proof}

We end this section with a complete characterization of the singular
set of $f$, proving in particular that it is empty when $f$ is an injective
function. The following two results, from \cite{open,zerosopen}, will
prove useful to this end.

\begin{thm}
Let $\Omega$ be a symmetric slice domain and let $f : \Omega \to \hh$
be a non-constant regular function. The degenerate set $D_f$ is closed
in $\Omega\setminus \rr$ and it has empty interior.
\end{thm}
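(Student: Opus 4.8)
The plan is to prove the two assertions separately: closedness in $\Omega\setminus\rr$ will follow softly from continuity of $f$ together with the product structure \eqref{domain}, while the empty-interior claim will be reduced, through the slice functions $f_I$, to the identity principle for regular functions.

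\emph{Closedness.} I would take a sequence $q_k\in D_f$ with $q_k\to q_0\in\Omega\setminus\rr$, writing $q_k=x_k+I_ky_k$ and $q_0=x_0+I_0y_0$ with $x_k=\Re q_k$, $y_k=|\Im q_k|$ (and similarly for $q_0$), so that $y_0>0$ and $x_k\to x_0$, $y_k\to y_0$. Since $q_k\in D_f$, the whole sphere $x_k+y_k\s$ lies in $\Omega$ and $f$ is constant on it with value $c_k=f(q_k)$. For a fixed but arbitrary $I\in\s$ the point $x_k+Iy_k$ lies on that sphere, so $f(x_k+Iy_k)=c_k$; since $\Omega$ is a symmetric slice domain, $x_0+y_0\s\subseteq\Omega$ by \eqref{domain}, and passing to the limit with $f$ continuous gives $f(x_0+Iy_0)=\lim_kc_k=f(q_0)$. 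As $I$ was arbitrary, $f$ is constant on $x_0+y_0\s$, i.e.\ $q_0\in D_f$. (Note $D_f\cap\rr=\emptyset$, which is why the statement is made over $\Omega\setminus\rr$.)

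\emph{Empty interior.} I would argue by contradiction. If $D_f$ had nonempty interior it would contain a ball $B(q_0,\vep)$ around some $q_0=x_0+I_0y_0$ with $y_0>0$. Every point of $B(q_0,\vep)$ lies on a $2$-sphere on which $f$ is constant, so the value of $f$ at a point $x+Ly$ of $B(q_0,\vep)$ depends only on $(x,y)$; write it $F(x,y)$, a function which is smooth because $f$ is. The restriction $f_{I_0}$ is holomorphic on $\Omega_{I_0}$, and since $f_{I_0}(x+I_0y)=F(x,y)$ on $B(q_0,\vep)\cap L_{I_0}$, the condition $\bar\partial_{I_0}f_{I_0}=0$ there reads $(\partial_x+I_0\partial_y)F=0$ on the corresponding open set of parameters $(x,y)$. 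But for $I_1\in\s$ close enough to $I_0$, the points $x+I_1y$ with $(x,y)$ near $(x_0,y_0)$ still lie in $B(q_0,\vep)$, so likewise $(\partial_x+I_1\partial_y)F=0$ near $(x_0,y_0)$; subtracting, $(I_0-I_1)\,\partial_yF=0$, and as $I_0-I_1\neq0$ is invertible in $\hh$ this forces $\partial_yF=0$, hence $\partial_xF=0$, near $(x_0,y_0)$. Thus $f_{I_0}$ is constant near $q_0$, hence (by analytic continuation on the connected set $\Omega_{I_0}$) constant on all of $\Omega_{I_0}$; then $f-f(q_0)$ vanishes on $\Omega_{I_0}$, so the identity principle for regular functions on the symmetric slice domain $\Omega$ forces $f\equiv f(q_0)$, contradicting that $f$ is non-constant.

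The single substantive step is the two-slice comparison above: the hypothesis that $D_f$ has interior is exploited by reading $f$ off along two nearby slices $L_{I_0}$, $L_{I_1}$ through the same family of spheres, and the cancellation $(I_0-I_1)\,\partial_yF=0$ is precisely what upgrades ``$f$ is constant on each sphere near $q_0$'' to ``$f$ is locally constant near $q_0$''. I do not expect a serious obstacle; the two points to watch are that $B(q_0,\vep)$ need not meet $\rr$ — so one localises at $q_0$ first and only then propagates through the two identity principles (holomorphic on the connected slice $\Omega_{I_0}$, then regular on $\Omega$) — and that the auxiliary facts invoked (smoothness of regular functions, connectedness of each $\Omega_I$ from Definition~\ref{SSD}, and the identity principle on symmetric slice domains) have all been recalled above.
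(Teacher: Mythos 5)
Your argument is correct, but note that the paper does not prove this statement at all: it is quoted as a known result from the cited works on the open mapping theorem for regular functions, so there is no internal proof to compare with. Your closedness argument (whole spheres lie in $\Omega$ by \eqref{domain}, constancy on spheres passes to the limit by continuity of $f$) is the natural one. Your empty-interior argument is a genuinely self-contained alternative to the route taken in the literature, where one typically characterises $D_f$ as the vanishing set of the spherical derivative (equivalently of the coefficient $A_1$ in the expansion \eqref{expansion2}, cf.\ Proposition~\ref{characterization}) and then invokes an identity principle for that auxiliary function; instead you exploit the hypothesis directly with the two-slice comparison $(\partial_x+I_0\partial_y)F=0$, $(\partial_x+I_1\partial_y)F=0$, whose difference $(I_0-I_1)\partial_yF=0$ kills $\partial_yF$ because nonzero quaternions are invertible, forcing $f_{I_0}$ to be locally constant. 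This buys elementarity (no spherical series or auxiliary slice function needed, only smoothness, holomorphy of each restriction, and the identity principle), at the cost of being specific to this statement rather than yielding the finer algebraic characterisation of $D_f$ used elsewhere in the paper. Two small points you should make explicit: $f_{I_0}$ is $\hh$-valued, so "constant on an open subset of the connected $\Omega_{I_0}$ implies constant on $\Omega_{I_0}$" should be justified via the splitting $f_{I_0}=g+hj$ with $g,h$ holomorphic and $\cc$-valued; and one should shrink $\vep$ so that $B(q_0,\vep)$ avoids $\rr$ and lies in $\Omega$, which you essentially do. Neither is a gap.
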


\begin{thm}[Open Mapping Theorem]\label{open}
Let $f$ be a regular function on a symmetric slice domain $\Omega$ and
let $D_f$ be its degenerate set. Then $f:\Omega \setminus
\ol{D}_f \to \hh$ is open.
\end{thm}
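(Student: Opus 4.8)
The final statement to be proved is the Open Mapping Theorem (Theorem~\ref{open}): if $f$ is regular on a symmetric slice domain $\Omega$ with degenerate set $D_f$, then $f:\Omega\setminus\ol{D}_f\to\hh$ is open. The plan is to reduce openness to a local question at each point $q_0\in\Omega\setminus\ol D_f$ and split into two cases according to whether $f_*$ is invertible at $q_0$. When $(f_*)_{q_0}$ is invertible, the classical inverse function theorem (applicable since regular functions on symmetric slice domains are $C^\infty$, as recalled after Definition~\ref{SSD}) gives at once that $f$ maps a neighbourhood of $q_0$ onto a neighbourhood of $f(q_0)$, so it remains only to handle points of the singular set $N_f$ that are not in $\ol D_f$.

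For such a point $q_0=x_0+Iy_0\in N_f\setminus\ol D_f$, I would invoke Proposition~\ref{characterization}: since $q_0\in N_f$ but $q_0\notin D_f$, there exist $\widetilde q_0\in(x_0+y_0\s)\setminus\{\bar q_0\}$ and a regular $g:\Omega\to\hh$ with
\[
f(q)=f(q_0)+(q-q_0)*(q-\widetilde q_0)*g(q).
\]
The key point is that $f-f(q_0)$ has total multiplicity exactly $n\ge 2$ at $x_0+y_0\s$, and because $q_0\notin D_f$ the sphere $x_0+y_0\s$ is \emph{not} contained in the degenerate set, so $g$ can be chosen with $g(q_0)\ne 0$ (shrinking, $g$ is non-vanishing near $q_0$). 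The strategy is then to restrict $f$ to the complex line $\Omega_I=L_I\cap\Omega$ through $q_0$: there $f_I$ is a genuine holomorphic function of one complex variable, and the factorisation above, read on $L_I$, shows that $f_I-f_I(q_0)$ vanishes to some finite order $k\ge 1$ at $q_0$. Hence $f_I$ is open at $q_0$ \emph{within $L_I$}, mapping a disc around $q_0$ onto a neighbourhood of $f(q_0)$ inside $\hh$ — but this only produces an open subset of the two-real-dimensional plane $f_I(L_I)$, not of $\hh$, so an extra transverse direction is needed.

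The transverse direction is supplied by the perpendicular derivative described in formula~\eqref{differential} and its consequence~\eqref{LR}: near $q_0$ the differential $f_*$ acts on $L_I^\perp$ by right multiplication by $A_1$. If $A_1\ne 0$ then $f_*$ restricted to $L_I^\perp$ is already injective, and combining the holomorphic openness of $f_I$ on $L_I$ with this injectivity on a complementary $2$-plane should give openness of $f$ at $q_0$ after a short argument (e.g.\ writing $f$ near $q_0$ as a $C^\infty$ map whose $4\times 4$ Jacobian, while not invertible, has the block structure from~\eqref{LR} with the $L_I$-block a nonzero holomorphic germ and the $L_I^\perp$-block invertible, then applying the classical fact that a holomorphic map $\cc\to\cc$ times a linear isomorphism is open). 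The remaining subcase is $A_1=0$: then Proposition~\ref{classification} forces $A_2\ne 0$ (else $f_*\equiv 0$ on a neighbourhood, contradicting non-constancy via the identity principle), so $f$ restricted to $L_I$ is \emph{constant to first order} but the expansion~\eqref{expansion2} shows $f(q)-f(q_0)=[(q-x_0)^2+y_0^2]^m\,[\,\cdots]$ behaves like a nonzero power of the sphere-defining function; here one uses that $q_0\notin\ol D_f$ to ensure the bracketed factor is non-vanishing near $q_0$, and again reads off openness from the one-variable holomorphic picture on each slice together with the transverse behaviour. The main obstacle is precisely this bookkeeping at singular points: one must show that the failure of $f_*$ to be invertible at $q_0\notin\ol D_f$ is "holomorphic" in nature — a branching along one slice rather than a genuine collapse — and I expect the cleanest route is to phrase the whole local argument on $\Omega_I$ using the one-variable open mapping theorem plus the explicit transverse formula~\eqref{LR}, rather than attempting a direct $4$-dimensional degree computation.
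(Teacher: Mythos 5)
You should first be aware that the paper does not prove Theorem~\ref{open} at all: it is quoted from \cite{open,zerosopen}, where the argument runs through the maximum and minimum modulus principles (isolate the zero of $f-f(q_0)$, bound $|f-f(q_0)|$ from below on the boundary of a small set, and conclude that any value $c$ near $f(q_0)$ is attained because otherwise $|f-c|$ would have an interior minimum without vanishing), not through any analysis of the real differential. So your proposal is necessarily a different route, and its first half is fine: at points where $f_*$ is invertible the inverse function theorem applies since $f$ is $C^\infty$. The entire content of the theorem, however, sits at the remaining points, and that is where your argument has a genuine gap.

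Concretely, take $q_0=x_0+Iy_0\in N_f\setminus\ol D_f$ non-real; then necessarily $A_1\ne0$ (by Proposition~\ref{characterization}, $A_1=0$ at a non-real point already forces $q_0\in D_f$, so your subcase $A_1=0$ can only concern real points, where the sketch also stops short of a proof). By Proposition~\ref{classification}, non-invertibility means $p:=1+2\Im(q_0)A_2A_1^{-1}\in L_I^\perp$, and then $(f_*)_{q_0}(u+w)=(up+w)A_1$ with $up\in L_I\cdot L_I^\perp=L_I^\perp$. Hence the image of the \emph{whole} differential is the single $2$-plane $L_I^\perp A_1$, and in particular the image of $L_I$ is contained in the image of $L_I^\perp$. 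So the ``transverse direction supplied by \eqref{LR}'' does not exist at exactly the points where you need it: to first order the slice direction and the perpendicular direction collapse into the same $2$-plane, there is no block structure of the form (nonzero holomorphic germ) times (linear isomorphism), and \eqref{LR} describes only the derivative at the single point $q_0$, not a product decomposition of $f$ near $q_0$ to which a ``holomorphic map times isomorphism is open'' fact could be applied. (A further small inaccuracy: $f_I(\Omega_I)$ is a holomorphic curve in $\hh\cong\cc^2$, in general not a plane, so even ``openness of $f_I$ within its image'' has to be formulated with care.) Openness at points of $N_f\setminus\ol D_f$ is a second-order, branched-covering phenomenon — compare $f(q)=q^2+qi$ along its singular plane in Theorem~\ref{2to1}, or Theorem~\ref{fully}; note that Theorem~\ref{fully} cannot be invoked to repair the argument, since its proof in the paper uses Theorem~\ref{open}. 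To close the gap one needs a mechanism that sees beyond the first-order data, and the cleanest is the modulus argument of the cited references, which exploits $q_0\notin\ol D_f$ precisely to isolate the zero of $f-f(q_0)$ before applying the minimum modulus principle.
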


We are now in a position to fully characterise the singular set of $f$.

\begin{thm}\label{fully}
Let $\Omega$ be a symmetric slice domain and let $f : \Omega \to \hh$
be a non-constant regular function. Then its singular set $N_f$ has
empty interior. 
Moreover, for a fixed $q_0 = x_0+Iy_0\in N_f$, let $n>1$ 
be the total multiplicity of $f-f(q_0)$ at $x_0+y_0\s$. Then there
exist a neighbourhood $U$ of $q_0$ and a neighbourhood $T$ of
$x_0+y_0 \s$ such that, for all $q_1 \in U$, the sum of the total multiplicities 
of the zeros of $f-f(q_1)$ in $T$ equals $n$; in particular, for all 
$q_1 \in U\setminus N_f$ the preimage of $f(q_1)$ includes at least two 
distinct points of $T$.
\end{thm}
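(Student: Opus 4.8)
I would establish the two assertions separately: the first is a short application of Sard's theorem together with the open mapping property, while the second is obtained by pushing the count of preimages down to a single slice $L_I$, where it turns into the classical argument principle for the symmetrized function.

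\textbf{Emptiness of the interior of $N_f$.} Suppose $N_f$ had nonempty interior. Since $D_f$ is closed in $\Omega\setminus\rr$ with empty interior, its closure $\overline{D_f}$ (contained in $D_f\cup\rr$) is nowhere dense in $\Omega$, as is $\Omega\cap\rr$; hence one may pick a nonempty open $W\subseteq N_f$ disjoint from $\overline{D_f}\cup\rr$. By the definition of $N_f$, every point of $W$ is a critical point of the $C^\infty$ map $f\colon\Omega\to\hh$ between $4$-manifolds, so Sard's theorem forces $f(W)$ to have zero Lebesgue measure in $\hh$. On the other hand $W\subseteq\Omega\setminus\overline{D_f}$, so $f|_W$ is open by Theorem~\ref{open} and $f(W)$ is a nonempty open subset of $\hh$ --- a contradiction.

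\textbf{Reduction to a slice.} For a regular function $g$ on a symmetric slice domain write $g^{s}=g^{c}*g$ for its symmetrization, $g^{c}$ being the regular conjugate. Recall from the theory of zeros (\cite{zeros,zerosopen}) that $g^{s}$ is slice preserving: it maps each $\Omega_I$ into $L_I$, and the restriction $(g^{s})_I$ is a holomorphic function of one complex variable on the plane open set $\Omega_I\subseteq L_I\cong\cc$. Recall moreover that $g^{s}$ vanishes on a $2$-sphere $x+y\s$ precisely when $g$ has a zero on it, and that --- as one reads off from Theorem~\ref{factorization} applied to $g$ --- for $y\ne0$ the order of $(g^{s})_I$ at each of the two points $x+Iy$ and $x-Iy$ equals the \emph{total multiplicity} of $g$ at $x+y\s$, while at a real zero $x$ of $g$ the order of $(g^{s})_I$ is twice the multiplicity of $g$ at $x$. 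It follows that whenever $g\not\equiv0$ and $T$ is a \emph{symmetric} open set with $\overline T$ compact in the domain of $g$ such that $(g^{s})_I$ does not vanish on $\partial(T\cap L_I)$, the number of zeros of $(g^{s})_I$ in $T\cap L_I$ counted with multiplicity equals twice the sum of the total multiplicities of the zeros of $g$ lying in $T$ (being symmetric, $T$ contains every $2$-sphere it meets).

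\textbf{Stability and conclusion.} Apply this to $g=g_{q_1}:=f-f(q_1)$, which is not identically zero because $f$ is non-constant. Setting $c=f(q_1)$, one checks from $g_{q_1}^{s}=(f^{c}-\bar c)*(f-c)$ that $g_{q_1}^{s}$, and therefore $(g_{q_1}^{s})_I$ uniformly on compact subsets of $\Omega_I$, depends continuously on $q_1$. Since the zero set of $g_{q_0}=f-f(q_0)$ is a union of isolated points and isolated $2$-spheres, I fix a small enough $R>0$ and put $T=U(x_0+y_0\s,R)=\{q:|(q-x_0)^2+y_0^2|<R^2\}$, so that $\overline T$ is compact in $\Omega$, $x_0+y_0\s$ is the only zero of $g_{q_0}$ in $\overline T$, and $T\cap L_I$ is a bounded open subset of $L_I$ with smooth boundary on which $(g_{q_0}^{s})_I$ does not vanish. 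By the previous paragraph, $(g_{q_0}^{s})_I$ then has exactly $2n$ zeros in $T\cap L_I$. By Rouch\'e's theorem there is a neighbourhood $U\subseteq T$ of $q_0$ such that for every $q_1\in U$ the function $(g_{q_1}^{s})_I$ is still nonvanishing on $\partial(T\cap L_I)$ and has exactly $2n$ zeros in $T\cap L_I$; by the reduction once more, the total multiplicities of the zeros of $f-f(q_1)$ in $T$ sum to $n$. Finally, if $q_1\in U\setminus N_f$ then Proposition~\ref{characterization} shows that $f-f(q_1)$ has total multiplicity exactly $1$ at the sphere through $q_1$, a sphere contained in $T$ because $q_1\in U\subseteq T$; since the total multiplicities inside $T$ add up to $n\ge2$, there must be another zero $p\ne q_1$ of $f-f(q_1)$ in $T$, so $f^{-1}(f(q_1))$ contains at least two distinct points of $T$.

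\textbf{Main obstacle.} The one point requiring real care is the dictionary relating the total multiplicity of $f$ at a sphere, as defined through Theorem~\ref{factorization}, to the order of vanishing of the bona fide holomorphic function $(f^{s})_I$ --- together with the bookkeeping this involves (the factor $2$, and the distinction between real and non-real base points and between spherical and isolated zeros). Once that is in hand, the stability of the count is merely the classical argument principle, and the choice of a symmetric tube $T$ with the required properties is routine, given that zeros of regular functions on symmetric slice domains form isolated points and isolated $2$-spheres.
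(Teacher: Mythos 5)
Your proposal is correct and follows essentially the same route as the paper: symmetrize $f-f(q_1)$, restrict to a slice $L_I$ where the symmetrization is a genuine holomorphic function whose vanishing order encodes the total multiplicity, stabilise the zero count under perturbation of $q_1$ (Rouch\'e in your case, Hurwitz in the paper's), and invoke Proposition~\ref{characterization} to produce a second preimage point when $q_1\notin N_f$. The only minor deviations are that you use Sard's theorem plus the open mapping theorem for the empty-interior claim where the paper uses the constant rank theorem (via the rank-$2$ statement of Proposition~\ref{classification}) plus the open mapping theorem, and that you count $2n$ zeros in the full symmetric tube $T\cap L_I$ rather than $n$ in a half-plane disc whose symmetric completion is then taken.
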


\begin{proof}
Since $f$ is not constant, the singular set $N_f$ has empty interior
if and only if $N_f \setminus D_f$ does. By proposition
\ref{classification}, the rank of $f_*$ equals $2$ at each point of
$N_f \setminus D_f$. If $N_f \setminus D_f$ contained a non empty open
set $A$, its image $f(A)$ could not be open by the constant rank
theorem, and the open mapping theorem \ref{open} would be
contradicted.

In order to prove our second statement, let us introduce the notation
$f_{q_1}(q) = f(q) - f(q_1)$ for $q_1 \in \Omega$. By proposition
\ref{characterization}, if $q_0 = x_0+Iy_0 \in N_f$ then the total
multiplicity $n$ of $f_{q_0}$ at $x_0+y_0\s$ is strictly greater than $1$. 
We will now make use of the operation of symmetrization defined in
\cite{advancesrevised} setting $f^s = f*f^c = f^c*f$ where $f^c$ is
defined by formula $$\bigg(\sum_{n \in \nn} q^n a_n\bigg)^{\!c} = 
\sum_{n \in \nn} q^n \bar a_n$$ on power series, and appropriately
extended to regular functions on symmetric slice domains. By direct
computation, the total multiplicity of each $2$-sphere $x+y\s$ for
$f^s$ is twice its total multiplicity for $f$.  Hence, in our case
$x_0+y_0\s$ has total multiplicity $2n$ for $f_{q_0}^s$, i.e., there
exists a regular $h: \Omega \to \hh$ having no zeros in $x_0+y_0\s$
such that $f_{q_0}^s(q) = [(q-x_0)^2+y_0^2]^{n} h(q)$ and
$$f_{q_0}^s(z) = \big[(z-x_0)^2+y_0^2\big]^n\,h(z) = 
(z-q_0)^n(z-\bar q_0)^nh(z)$$ for all $z \in \Omega_I$. Furthermore,
$f_{q_0}^s(\Omega_I) \subseteq L_I$ as explained in \cite{zerosopen},
so that the restriction of $f_{q_0}^s$ to $\Omega_I$ can be viewed as
a holomorphic complex function.  Let us choose an open $2$-disc
$\Delta$ centred at $q_0$ such that $f_{q_0}^s$ has no zeros in
$\ol{\Delta} \setminus \{q_0\}$, with $\Delta$ strictly included
both in $\Omega_I$ and in the half-plane of $L_I$ that contains
$q_0$. If we denote by $F_{q_1}$ the restriction of $f_{q_1}^s$ to
$\Delta$ then $q_1 \mapsto F_{q_1}$ is continuous in the topology of
compact uniform convergence and $F_{q_0}$ has a zero of multiplicity
$n$ at $q_0$ (and no other zero). We claim that there exists a
neighbourhood $U$ of $q_0$ such that for all $q_1 \in U$ the sum of the
multiplicities of the zeros of $F_{q_1}$ in $\Delta$ is $n$: if this
were not the case, it would be possible to construct a sequence
$\{q_k\}_{k \in \nn}$ converging to $q_0$ such that $\{F_{q_k}\}_{k
  \in \nn}$ contradicted Hurwitz's theorem (in the version of
\cite{libroconway}). Now let $R>0$ be the radius of $\Delta$ and let
$$T=T(x_0+y_0 \s, R) = \{x+Jy: |x-x_0|^2+|y-y_0|^2 < R^2, J \in \s\}$$
be its symmetric completion. Then for all $q_1 \in U$, $2n$ equals the
sum of the spherical multiplicities of the zeros of $f_{q_1}^s$ in
$T$.  Hence, there exist points $q_2,\ldots,q_n \in T$ and a
function $h: \Omega \to \hh$ having no zeros in $T$ such that
$$f_{q_1}(q) = (q-q_1)*\cdots*(q-q_n)*h(q).$$ Now let us suppose $q_1
= x_1+Jy_1 \in U\setminus N_f$. Then $f_*$ is invertible at $q_1$ and,
by proposition \ref{characterization}, the point $q_2$ cannot 
belong to the sphere $x_1+y_1\s$. Hence, $f_{q_1}(q)$ has at least $2$ 
zeros in $T$ and the preimage of $f(q_1)$ via $f$ intersects $T$ at least 
twice.
\end{proof}

\begin{cor}\label{injective}
Let $\Omega$ be a symmetric slice domain and let $f:\Omega\to\hh$ be a
regular function. If $f$ is injective then its singular set $N_f$ is
empty.
\end{cor}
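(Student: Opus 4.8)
The plan is to prove the contrapositive form, showing that a single point of $N_f$ already forces $f$ to fail injectivity, so that injectivity forces $N_f=\emptyset$. First I would clear away the degenerate possibility: a domain $\Omega$ is open and connected, hence has more than one point, so a constant map cannot be injective; therefore an injective regular $f$ is automatically non-constant, and Theorem~\ref{fully} is available.

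Next, assume $N_f\neq\emptyset$ and pick $q_0=x_0+Iy_0\in N_f$. Applying Theorem~\ref{fully} gives two things at once: that $N_f$ has empty interior, and that there exist a neighbourhood $U$ of $q_0$ and a neighbourhood $T$ of $x_0+y_0\s$ such that for every $q_1\in U\setminus N_f$ the fibre $f^{-1}(f(q_1))$ contains at least two distinct points of $T$. Since $N_f$ has empty interior, $U\setminus N_f$ is non-empty; choosing any $q_1$ in it produces two distinct points of $\Omega$ sent by $f$ to the same value $f(q_1)$. This contradicts injectivity, so $N_f=\emptyset$.

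The substantive obstacle has in fact been absorbed into Theorem~\ref{fully}: all the analysis — the empty-interior statement (via the open mapping theorem and the constant rank theorem), the local counting of total multiplicities under perturbation of $f(q_1)$ (via Hurwitz's theorem applied to the symmetrisation $f_{q_0}^s$), and the conclusion that a singular value near $q_0$ is attained at least twice in $T$ — is already done there. Consequently the corollary reduces to the short syllogism above; the only point that needs a moment's care is the real case $y_0=0$, but Theorem~\ref{fully} is phrased for an arbitrary $q_0\in N_f$, so no separate argument is required.
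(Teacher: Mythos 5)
Your argument is correct and is exactly the route the paper intends: the corollary is stated without a separate proof precisely because it follows immediately from Theorem~\ref{fully} (injectivity forces $f$ non-constant, the empty-interior statement gives a point $q_1\in U\setminus N_f$, and the two-point fibre conclusion contradicts injectivity). Nothing in your write-up deviates from this, so no further comment is needed.
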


\section{Induced complex structures}\label{induced}

This section starts by explaining the link between the orthogonal
complex structure $\jj$ and the theory of regular functions. The idea
is simple, namely that a regular function maps $\jj$ locally to
another \emph{orthogonal} complex structure. The main formula,
\eqref{key} below, expresses this fact even more simply, but its
consequences are significant. 

The real differential of an injective regular function is invertible
at all points, thanks to Corollary \ref{injective}. This allows us to
push-forward the complex structure $\jj$ we defined on $\hh \setminus
\rr$; we recall that for all $q \in \hh \setminus \rr$ and for all
$\vv\in T_q\hh \cong \hh$, $$\jj_q\vv = I_q\vv$$ where
$$I_q = \frac{\Im(q)}{|\Im(q)|} \in \s.$$

\begin{defin}
Let $\Omega$ be a symmetric slice domain, and let $f: \Omega \to \hh$
be an injective regular function. The \emph{induced structure} on
$f(\Omega \setminus \rr)$ is the push-forward
\begin{equation*}
\jj^f = f_*\,\jj\,(f_*)^{-1}.
\end{equation*}
\end{defin}

We now find an explicit expression for this induced structure, 
thereby proving that it is orthogonal.

\begin{prop}\label{induce}
Let $\Omega$ be a symmetric slice domain, and let $f: \Omega \to \hh$
be an injective regular function. Then
\begin{equation}\label{key}
\jj^f_{f(q)}\vv = I_q\vv 
\end{equation}
for all $q\in \Omega \setminus \rr$ and $\vv\in T_{f(q)} f(\Omega
\setminus \rr) \cong \hh$. As a consequence, $\jj^f$ is an OCS on
$f(\Omega \setminus \rr)$.
\end{prop}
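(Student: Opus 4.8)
The plan is to compute the push-forward $\jj^f = f_*\,\jj\,(f_*)^{-1}$ explicitly using the description of $f_*$ given in \eqref{LR}. Fix $q_0 = x_0 + I y_0 \in \Omega\setminus\rr$, write $I = I_{q_0}$, and decompose $\hh = L_I \oplus L_I^\perp$. By \eqref{LR}, $(f_*)_{q_0}$ acts as right multiplication by $B := A_1 + 2\Im(q_0)A_2$ on $L_I$ and by right multiplication by $A_1$ on $L_I^\perp$; since $f$ is injective, Corollary~\ref{injective} tells us $(f_*)_{q_0}$ is invertible, so both $A_1$ and $B$ are nonzero. The key structural point is that $\jj_{q_0}$ is left multiplication by $I$, and left and right multiplications on $\hh$ commute. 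So for $\vv = u + w$ with $u\in L_I$, $w\in L_I^\perp$, I would first invert: $(f_*)_{q_0}^{-1}\vv$ has $L_I$-component $u' B^{-1}$ where $uB^{-1}$ need not stay in $L_I$ — here I must be careful, because $L_I$ is not invariant under right multiplication by $B$ in general.

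So the cleaner route is: note that $L_I$ and $L_I^\perp$ are each invariant under \emph{left} multiplication by $I$ (since $I\in\s$ and $L_I = \rr + I\rr$, $L_I^\perp = (\rr+I\rr)K$ in the notation of Remark~\ref{stereo}), but the decomposition adapted to $f_*$ mixes them via right multiplication. The trick is that right multiplication by a quaternion maps $\s$-lines to $\s$-lines and, crucially, $\jj_{q_0}$ being \emph{left} multiplication by $I$ means that for any $\vv \in \hh$ and any $c \in \hh^*$, $\jj_{q_0}(\vv c) = (\jj_{q_0}\vv)c = (I\vv)c = I(\vv c)$. Therefore I would verify the identity directly on the two summands of the \emph{source}: for $u\in L_I$, $(f_*)_{q_0}(\jj_{q_0}u) = (f_*)_{q_0}(Iu) = (Iu)B = I(uB) = I\cdot(f_*)_{q_0}(u)$, and similarly $(f_*)_{q_0}(\jj_{q_0}w) = (Iw)A_1 = I(wA_1) = I\cdot(f_*)_{q_0}(w)$ for $w\in L_I^\perp$. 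Adding, $(f_*)_{q_0}\circ\jj_{q_0} = (\text{left mult.\ by }I)\circ(f_*)_{q_0}$ as maps $\hh\to\hh$. Composing on the right with $(f_*)_{q_0}^{-1}$ gives precisely $\jj^f_{f(q_0)}\vv = I_{q_0}\vv$ for all $\vv$, which is \eqref{key}.

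From \eqref{key} the remaining claims are almost immediate. That $\jj^f$ is an almost complex structure is inherited from $\jj$ (conjugation by an isomorphism preserves $J^2=-\Id$), or one reads it off \eqref{key} since $I_{q_0}^2 = -1$. Orthogonality: \eqref{key} shows $\jj^f_{f(q_0)}$ is left multiplication by the unit imaginary quaternion $I_{q_0}$, and left multiplication by a unit quaternion is an isometry of $\hh$ with the Euclidean metric (it is norm-preserving since $|ab| = |a||b|$), hence $g(\jj^f\vv,\jj^f\ww) = g(\vv,\ww)$; one also checks it is orientation-preserving exactly as for the constant structures in the introduction, using a basis $1, I_{q_0}, I', I_{q_0}I'$. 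Finally, integrability: $\jj$ is integrable on $\hh\setminus\rr$ (established via \eqref{product}), $f$ is a diffeomorphism onto its image by Corollary~\ref{injective} and the inverse function theorem, and integrability is a diffeomorphism-invariant notion (the Nijenhuis tensor is natural under push-forward), so $\jj^f = f_*\jj(f_*)^{-1}$ is integrable. Hence $\jj^f$ is an OCS on $f(\Omega\setminus\rr)$.

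The main obstacle is purely the bookkeeping in the first step: one must resist the temptation to compute $(f_*)_{q_0}^{-1}$ in coordinates (which is messy because $L_I$ is not stable under right multiplication by $B$), and instead exploit the left/right commutativity to verify $(f_*)_{q_0}\circ\jj_{q_0} = \jj_{q_0}^{\mathrm{const}}\circ(f_*)_{q_0}$ on the source decomposition, where $\jj_{q_0}^{\mathrm{const}}$ denotes left multiplication by $I_{q_0}$. Once that intertwining relation is in hand, everything else is formal. A minor subtlety worth a sentence in the write-up is that $\vv$ ranges over $T_{f(q_0)}f(\Omega\setminus\rr)\cong\hh$, i.e.\ we are using that $f$ is a local diffeomorphism so this tangent space is all of $\hh$, which is where Corollary~\ref{injective} is used.
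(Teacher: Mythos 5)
Your proof is correct. For the key formula \eqref{key}, your argument is essentially the paper's: both boil down to the intertwining relation $(f_*)_q\circ\jj_q = (\hbox{left mult.\ by }I_q)\circ(f_*)_q$, exploited via the commutativity of left and right quaternionic multiplication; the only cosmetic difference is that the paper verifies it in one line for an arbitrary tangent vector $\ww$ directly from \eqref{differential}, writing $(f_*)_q I_q\ww = I_q\ww A_1+(qI_q\ww-I_q\ww\bar q)A_2 = I_q\bigl(\ww A_1+(q\ww-\ww\bar q)A_2\bigr) = I_q(f_*)_q\ww$ (using that $q$ and $I_q$ commute), whereas you pass through the splitting $\hh=L_I\oplus L_I^\perp$ and \eqref{LR} — equivalent, and your remark that one should not try to invert $f_*$ in coordinates is well taken. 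Where you genuinely diverge is in the second assertion: the paper deduces that $\jj^f$ is an OCS by observing that $f$ is by construction a biholomorphism from $(\Omega\setminus\rr,\jj)$ onto $(f(\Omega\setminus\rr),\jj^f)$, so that $\jj^f$, regarded as the map $f(q)\mapsto I_q$ into $\s$, is holomorphic, and then invokes the criterion recalled before Theorem~\ref{thmjj} (integrability of an orthogonal almost complex structure is equivalent to holomorphicity of the associated map to $\s$). You instead read orthogonality and orientation-preservation off \eqref{key} pointwise (left multiplication by a unit imaginary quaternion is an orientation-preserving isometry of $\rr^4$) and get integrability from the fact that push-forward of an integrable structure by a diffeomorphism is integrable (naturality of the Nijenhuis tensor, or simply transporting holomorphic charts by $f^{-1}$, legitimate since Corollary~\ref{injective} plus the inverse function theorem make $f$ a diffeomorphism onto its image). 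Your route is more elementary and self-contained, avoiding the twistorial criterion; the paper's route has the advantage of setting up exactly the holomorphic-graph viewpoint that is exploited in section~\ref{twist}. Both are complete proofs.
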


\begin{proof}
If $\ww = (f_*)^{-1}_{f(q)}\vv$ then by direct computation
$$\jj^f_{f(q)}\vv = (f_*)_q \jj_q (f_*)^{-1}_{f(q)}\vv= (f_*)_q
\jj_q\ww =(f_*)_q I_q\ww.$$ Thanks to formula \eqref{differential} and
to the fact that $q$ and $I_q$ commute,
\[\begin{array}{rcl}
(f_*)_q I_q\ww &=&  I_q\ww A_1 + (q I_q\ww - I_q\ww \bar q)A_2\\[3pt]
 &=& I_q (\ww A_1 + (q\ww -\ww \bar q)A_2)\\[3pt]
&=& I_q (f_*)_q\ww\\[3pt]
&=& I_q\vv,
\end{array}\]
as desired.

Now, $f$ is an injective holomorphic map from the complex manifold
$(\Omega \setminus \rr, \jj)$ to the almost-complex manifold
$(f(\Omega\setminus \rr), \jj^f)$. Hence, $\jj^f$ can be viewed as a
holomorphic map from the almost-complex manifold $(f(\Omega\setminus
\rr), \jj^f)$ to $\s$, a fact which implies that $\jj^f$ itself is an
OCS (see the remarks preceding Theorem~\ref{thmjj}).
\end{proof}

\noindent It is important to note that \eqref{key} asserts that $f$ is
not in general $\jj$-holomorphic. This is because $I_q$ does not in
general coincide with $I_{f(q)}$ and, in other words, $f$ does not
preserve the subspace $\cc\cong L_{I_q}\subset\hh$ defined at every 
point $q$ of its domain. Next, we shall however discuss examples in 
which $f$ \emph{is} $\jj$-holomorphic.\medbreak

At this juncture, we need to introduce the quaternionic projective
line $\hp^1$. We define this to be the set of equivalence classes
$[q_1,q_2]$, where 
\begin{equation}\label{left}
  [q_1,q_2]=[pq_1,pq_2],\qquad \forall p\in\hh^*.
\end{equation}
The choice of \emph{left} multiplication is dictated by the choice in
Definition~\ref{reg}. We choose to embed $\hh$ as the affine line in
$\hp^1$ by mapping $q\in\hh$ to $[1,q]$, so that $[0,1]$ is the point
of infinity.

Once one identifies $S^4=\hh\cup\{\infty\}$ with $\hp^1$, it is well
known that the group of conformal transformations corresponds to the
group of invertible transformations
\begin{equation}\label{trans}
[q_1,q_2]\mapsto[q_1d+q_2c,\ q_1b+q_2a],\qquad a,b,c,d\in\hh
\end{equation}
(with ordering chosen to match \cite{moebius}).
The invertibility condition is
\[|a|^2|d|^2+|b|^2|c|^2-2\Re(\ol b d\ol c a)\ne0,\]
the left-hand side being the real determinant when
\hbox{\footnotesize$\Big(\!\!\begin{array}{c}a\ \ c\\b\ \ d\end{array}
  \!\!\Big)$} is embedded in $\mathfrak{gl}(4,\rr)$.  Restricting to
the affine line, \eqref{trans} becomes the linear fractional
transformation
\begin{equation}\label{mob}
q\mapsto (qc+d)^{-1}(qa+b).
\end{equation}
The group generated by these transformations is double covered by
$SL(2,\hh)$. Since the condition that a complex structure be
orthogonal depends only on the underlying conformally flat structure
of $\rr^4$, any element of $SL(2,\hh)$ certainly maps $\jj$ to another
\emph{orthogonal} complex structure.

The maps \eqref{mob} are not in general regular, in part because
regularity is not conserved under composition. However an analogous
class of regular M\"obius transformations was defined by the
third author in \cite{moebius}. These coincide with \eqref{mob} when
$c$ and $d$ are real, in which case we see that \eqref{mob} is regular
because $qc+d$ takes values in the complex plane $L_I$ where $I=I_q$
and its reciprocal can be expanded as a power series in $q$ with real
coefficients.

\begin{prop}\label{SO2H}
The subgroup of $SL(2,\hh)$ that maps $\jj$ to itself is 
\[ SO(2,\hh)\cong Sp(1)\times_{\zz_2}SL(2,\rr),\]
consisting of elements of \eqref{mob} such that $a,b,c,d$ are all real
multiples of the same unit quaternion $\vep\in Sp(1)\cong SU(2))$.
\end{prop}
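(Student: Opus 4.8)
The plan is to prove the two inclusions separately. Every element of $SO(2,\hh)$ in the stated description is a composition $q\mapsto\bar\vep\,\psi(q)\,\vep$ of the rotation $c^\vep\colon q\mapsto\bar\vep q\vep$ --- which is the element of \eqref{mob} with $a=d=\vep$, $b=c=0$ --- and a \emph{real} regular M\"obius transformation $\psi(q)=(qc_0+d_0)^{-1}(qa_0+b_0)$ with $M:=\bigl(\begin{smallmatrix}a_0&b_0\\c_0&d_0\end{smallmatrix}\bigr)\in SL(2,\rr)$; indeed, since $a_0,b_0,c_0,d_0$ commute with $\vep$, this composition is exactly \eqref{mob} with $a=a_0\vep$, $b=b_0\vep$, $c=c_0\vep$, $d=d_0\vep$. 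So I would first check that each of these two building blocks maps $\jj$ to itself, and then that, conversely, every $\jj$-preserving element of $SL(2,\hh)$ decomposes in this way.

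For the rotation $c^\vep$ a direct computation gives $(c^\vep_*)_q\,\jj_q\,(c^\vep_*)^{-1}_q\vv=\bar\vep I_q\vep\,\vv$, and since $\Im(\bar\vep q\vep)=\bar\vep\,\Im(q)\,\vep$ we have $I_{c^\vep(q)}=\bar\vep I_q\vep$, so $(c^\vep)_*\jj=\jj$. For $\psi$ I would note that, as observed just before the statement, $\psi$ is a regular function (on the symmetric slice domain obtained by deleting its real pole) and it is injective; moreover $\psi$ maps each $L_I$ into itself and, because $SL(2,\rr)$ preserves the upper half-plane, $I_{\psi(q)}=I_q$ for every $q\in\hsr$. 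Proposition~\ref{induce} then yields $\jj^\psi_{\psi(q)}\vv=I_q\vv=I_{\psi(q)}\vv$, i.e.\ $\psi_*\jj=\jj$. Composing, every element of $SO(2,\hh)$ maps $\jj$ to $\jj$.

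For the converse, suppose $\varphi\in SL(2,\hh)$ maps $\jj$ to itself. Then $\varphi$ preserves the domain $\hsr$ of $\jj$ (which is maximal by Theorem~\ref{thmjj}), hence maps the circle $\rr\cup\{\infty\}\subset\hp^1$ onto itself and restricts to a biholomorphism of $(\hsr,\jj)\cong\cp^1\times\cc^+$ (see \eqref{product}). The fibres $\cp^1\times\{z\}$ of the projection to $\cc^+$ are the only positive-dimensional compact connected complex submanifolds of $\cp^1\times\cc^+$ (their image in the bounded domain $\cc^+$ must be a point), so $\varphi$ permutes them and induces an automorphism of $\cc^+$. Since $\mathrm{Aut}(\cc^+)=PSL(2,\rr)$, I can choose a real transformation $\psi$ with $M\in SL(2,\rr)$ inducing the same automorphism of the $\cc^+$-factor; then $\psi^{-1}\varphi$ maps $\jj$ to itself and sends every sphere $x+y\s\subset\hsr$ onto itself, so, since for fixed $x$ these spheres shrink to $x$ as $y\to0$, it fixes $\rr\cup\{\infty\}$ pointwise. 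A short computation with \eqref{trans} then shows that the only elements of $SL(2,\hh)$ fixing $\rr\cup\{\infty\}$ pointwise are the rotations $q\mapsto d^{-1}qd$, $d\in\hh^*$; writing $\vep=d/|d|$ and using that $x\mapsto\bar\vep x\vep$ is an algebra automorphism of $\hh$ while $\psi$ has real coefficients, I get $\varphi(q)=\psi(\bar\vep q\vep)=\bar\vep\,\psi(q)\,\vep$, which is \eqref{mob} with $a,b,c,d$ real multiples of $\vep$ --- so $\varphi\in SO(2,\hh)$. Finally, the map sending $(\vep,M)$ to the element of \eqref{mob} with $a=a_0\vep$, $b=b_0\vep$, $c=c_0\vep$, $d=d_0\vep$ is a surjective homomorphism $Sp(1)\times SL(2,\rr)\to SO(2,\hh)$, and one checks its kernel is $\{(1,\Id),(-1,-\Id)\}\cong\zz_2$, which gives the asserted isomorphism.

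The step I expect to be the main obstacle is the end of the converse: ruling out that a $\jj$-preserving $\varphi$ could restrict to an \emph{orientation-reversing} M\"obius map of $\rr\cup\{\infty\}$, equivalently that the real matrix $M$ might have determinant $-1$ rather than $+1$. This is a genuine issue, since $q\mapsto q^{-1}$ has the form \eqref{mob} with real entries (so, naively, ``$a,b,c,d$ real multiples of $1$'') yet $(q\mapsto q^{-1})_*\jj=-\jj\ne\jj$. The descent to $\mathrm{Aut}(\cc^+)=PSL(2,\rr)$ is precisely what resolves this, automorphisms of the upper half-plane being orientation-preserving on its boundary $\rr\cup\{\infty\}$; this is also why the group that appears is $Sp(1)\times_{\zz_2}SL(2,\rr)$ rather than its extension by an orientation-reversing component.
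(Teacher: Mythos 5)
Your argument is correct, and while your forward inclusion is essentially the paper's (the same decomposition of $SO(2,\hh)$ into a real M\"obius transformation and the rotation $q\mapsto\vep^{-1}q\kern1pt\vep$, with the same direct computation showing the rotation is $\jj$-holomorphic, and Proposition~\ref{induce} plus $I_{\psi(q)}=I_q$ for the real factor), your converse takes a genuinely different route. The paper argues algebraically: a $\jj$-preserving map must send the real axis to itself, which forces $\ol{c}a$, $\ol{d}a+\ol{c}b$, $\ol{d}b\in\rr$, and a short manipulation then shows $a,b,c,d$ are real multiples of one unit quaternion. You instead argue on the complex manifold $(\hsr,\jj)\cong\cp^1\times\cc^+$: the $\cp^1$-fibres are the only positive-dimensional compact connected complex submanifolds, so a $\jj$-biholomorphism permutes the spheres $x+y\s$ and descends to an element of $\mathrm{Aut}(\cc^+)=PSL(2,\rr)$; dividing by a real representative reduces you to the pointwise stabiliser of $\rr\cup\{\infty\}$ in $SL(2,\hh)$, which your computation with \eqref{trans} correctly identifies as the rotations $q\mapsto d^{-1}qd$. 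What your route buys is precisely the point you flag at the end: preservation of the real axis alone does not distinguish $\jj\mapsto\jj$ from $\jj\mapsto-\jj$ (e.g.\ $q\mapsto q^{-1}$), whereas descending to $\mathrm{Aut}(\cc^+)$ pins the real matrix inside $SL(2,\rr)$ rather than its determinant-negative coset, so you obtain the sharper, orientation-correct form of the characterisation; the paper's computation is shorter and purely algebraic but leaves that sign issue implicit. One small caution on your last step: the kernel of $(\vep,M)\mapsto\vep M$ is $\{(1,\Id),(-1,-\Id)\}\cong\zz_2$ only if the target is read as the matrix group $SL(2,\hh)$ (which is what the statement requires); at the level of the transformations \eqref{mob} the kernel has order four, since $-\Id$ acts trivially.
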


\begin{proof}
Suppose that the bijection $f$ maps $\jj$ to itself. Inherent in this
condition is that $f$ maps the real axis (and so its complement) to
itself. Thus
\[(x\ol c+\ol d)(xa +b)=x^2\ol ca + x(\ol da + \ol cb)+\ol db\in\rr, 
\qquad\forall x\in\rr.\] This obviously holds given the
hypothesis. Conversely, if $\ol ca$, $\ol da+\ol cb$, $\ol db$ are all
real then we can write $a=\alpha c$ and $b=\beta d$ with
$\alpha,\beta\in\rr$. This implies that $\alpha\ol dc+\beta\ol
cd\in\rr$, which (since $\alpha\ne\beta$) forces $\ol dc\in\rr$ and so
$c=\gamma d$ with $\gamma\in\rr$. We conclude by setting $d=\delta\vep$
with $\delta\in\rr$, $\vep\in\hh$ and $|\vep|=1$.
 
Any element of $SO(2,\hh)$ is the composition of a real M\"obius
transformation $f\in SL(2,\rr)$ and the mapping
\begin{equation}\label{vep}
  f_\vep\colon\ q\mapsto\vep^{-1}\!q\kern1pt\vep,
\qquad\vep\in Sp(1).
\end{equation}
As remarked above, the former type preserves $\jj$. The latter fixes
the real axis, so by the identity principle cannot be regular
unless $\vep=\pm1$ and $f_\vep$ is the identity. On the other hand, it
\emph{is} $\jj$-holomorphic. For the differential of \eqref{vep} maps
$\vv\in\hh$ to $\vep^{-1}\!\vv\vep$ and \[ \jj((f_\vep)_*\vv) =
I_{\vep^{-1}\!q\vep}(\vep^{-1} \vv\vep) = 
\vep^{-1}\!I_q\vep(\vep^{-1}\!\vv\vep) =
\vep^{-1}\kern-1pt(I_q\vv)\kern1pt\vep = 
(f_\vep)_*(I_q\vv) = (f_\vep)_*(\jj\vv),\] 
so the induced complex structure at $\vep^{-1}\!q\kern1pt \vep$ coincides
with $\jj$.
\end{proof} 

\noindent The group $SO(2,\hh)$, sometimes denoted $SO^*(4)$, has the
same complexification as the orthogonal group
$SO(4)=Sp(1)\times_{\zz_2}Sp(1)$. It double covers the \emph{isometry}
group of $\hsr\cong\cp^1\times \cc^+$, endowed with the
product of metrics with constant curvature $\pm1$ (recall
\eqref{product}). The representation of $SO(2,\hh)$ on the space of
real symmetric $3\times3$ matrices was the object of study in
\cite[section~4]{viaclovsky}.\medbreak

\begin{rem}\label{five}
The following classes of functions all map the orthogonal complex
structure $\jj$ to another \emph{orthogonal} complex structure:

{\it1.} $SO(2,\hh)/\zz_2$, the group of $\jj$-holomorphic conformal
transformations;

{\it2.} $SL(2,\hh)/\zz_2$, the group of all conformal transformations;

{\it3.} the pseudo-group of all $\jj$-holomorphic regular maps;

{\it4.} the pseudo-group of all $\jj$-holomorphic maps;

{\it5.} the set of all regular maps.

\noindent 
  Recall that $\jj$ is merely the product complex structure relative
  to \eqref{domain}. Note that the group $PGL(2,\cc)$ of automorphisms
  of $\cp^1$ (acting as the identity on $\cc^+$) lies in {\it4} but
  not {\it3}. This is because an element of $PGL(2,\cc)$ is only
  regular if it is the identity (by the remark following
  \eqref{vep}). We shall investigate a mapping properly in class
        {\it5} in section~\ref{removing}.
\end{rem}

In preparation for the next section, we next describe a pair $u,v$ of
holomorphic coordinates for the complex structure $\jj$.

It is well known that conjugation by any quaternion (as in
\eqref{vep}) acts on $\hh=\rr\oplus\rr^3$ as a rotation on the $\rr^3$
summand (and obviously fixes the real part of any element
$q\in\hh$). With this in mind, define
\[ Q_u=1+uj,\qquad u\in\cc,\]
and  consider the mapping
\[ \begin{array}{crcl}
\phi\colon & \cc\times\cc^+ &\longrightarrow& \hh\\[3pt]
& (u,v) &\mapsto& Q_u^{-1}v\kern1pt Q_u.
\end{array}\]
We shall write $v=x+iy$, so $y>0$ and the complex number $v$
represents an element in the upper half plane $\cc^+$.

It follows that $\phi(u,i)$ belongs to the sphere $\s$ of unit
quaternions, and (up to a factor $-i$) it is in fact the inverse of
the stereographic projection $\s\to\cc$ from the point $-i$,
cf.~\eqref{stereo}. To see this, suppose that
\[   I=ai+bj+ck = Q_u^{-1}i\kern1pt Q_u \in \s,\]
so that
\[  (1+uj)(ai+(b+ic)j)=i+iuj,\]
whence
\begin{equation}\label{UBC}
 u = -i\frac{b+ic}{1+a}.
\end{equation}
We can write any $I\in\s\setminus\{-i\}$ as $\phi(u,i)$ for a unique
$u\in\cc$, but of course if we extend the domain of $u$ to $\cp^1$ by
allowing $u$ to assume the value $\infty$, then $\phi$ exhibits the
isomorphism \eqref{product}.

An element $q\in\hsr$ can now be represented by means of
$\phi$:
\begin{equation}\label{uvu}
 q=Q_u^{-1}v\kern1pt Q_u=(1+uj)^{-1}v(1+uj),
\end{equation}
and this makes it easier to analyse maps with $\hsr$ as domain.  For
example, to apply an element of $PGL(2,\cc)$ (mentioned in
remark~\ref{five}) one merely replaces $u$ in \eqref{uvu} by
$(cu+d)/(au+b)$ with $a,b,c,d\in\cc$. The relevance of \eqref{uvu} in
the study of power series can be seen immediately by computing
\[ q^n = (Q_u^{-1}v^nQ_u)^n =Q_u^{-1}v^nQ_u.\]
In the next section, we shall ``eliminate'' the inverse term
$Q_u^{-1}$ by passing to projective coordinates, and interpreting the
above facts twistorially.

\section{Twistor lifts}\label{twist}

Recall the ``leftish'' definition \eqref{left} of the quaternionic
projective line $\hp^1$. It enables us to define the \emph{twistor
  projection}
\begin{equation}\label{pi}
\begin{array}{crcl}
\pi\colon & \cp^3 &\longrightarrow & \hp^1\\[3pt]
& [Z_0,Z_1,Z_2,Z_3] &\mapsto& [Z_0+Z_1j,\ Z_2+Z_3j].
\end{array}
\end{equation}
Observe that, with our choices, changing the complex representative
$(Z_0,Z_1,Z_2,Z_3)\in\cc^4\setminus\{0\}$ will not affect the image in
$\hp^1.$

Embed $\hh$ as the affine line in $\hp^1$ by mapping $q\in\hh$ to
$[1,q]$. Note that $[0,1]$ then corresponds to the point $\infty\in
S^4$, and that
\begin{equation}\label{fibinf}
 \pi^{-1}(\infty) =\{[0,0,Z_2,Z_3]: [Z_2,Z_3] \in\cp^1\}.
\end{equation}
With the notation of the previous section,
\[
 [1,\ q]\ =\ [Q_u,\ v\kern1pt Q_u] = [1+uj,\ v(1+uj)]\\[3pt]
= \pi[1,\,u,\,v,\,uv].
\]
We therefore obtain

\begin{prop}\label{quadric}
The complex manifold $(\hsr,\>\jj)$ is biholomorphic to the
open subset $\sQ^+$ of the quadric
\[\sQ=\{[Z_0,Z_1,Z_2,Z_3]\in\cp^3: Z_0Z_3=Z_1Z_2\}\]
whose elements satisfy at least one of the following conditions:
\begin{itemize}
\item $Z_0\ne0$ and $Z_2/Z_0\in\cc^+$,
\item $Z_1 \neq 0$ and $Z_3/Z_1 \in \cc^+$.
\end{itemize}
\end{prop}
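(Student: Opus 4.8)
The plan is to exhibit an explicit biholomorphism between $(\hsr,\jj)$ and $\sQ^+$ using the coordinates $(u,v)$ from the previous section, and then to check that the image of the parametrisation lands exactly in $\sQ^+$ and is onto it. First I would recall from \eqref{uvu} that every $q\in\hsr$ is $Q_u^{-1}vQ_u$ for a unique pair $(u,v)\in\cp^1\times\cc^+$ (with $u=\infty$ allowed), and that the computation just before the proposition gives $[1,q]=\pi[1,u,v,uv]$. This says that the point $[Z_0,Z_1,Z_2,Z_3]=[1,u,v,uv]\in\cp^3$ lies over $q$ under the twistor projection. The first observation is that such a point always satisfies $Z_0Z_3=Z_1Z_2$, i.e.\ $uv=u\cdot v$, so it lies on $\sQ$; when $u=\infty$ one instead takes the representative $[0,1,0,v]$ (the limit of $[1/u,1,v/u,v]$), which also lies on $\sQ$.

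Next I would set up the map $\Phi\colon\cp^1\times\cc^+\to\sQ$ by $(u,v)\mapsto[1,u,v,uv]$ for $u\in\cc$ and $(\infty,v)\mapsto[0,1,0,v]$, and observe it is a holomorphic embedding of $\cp^1\times\cc^+$: on the chart $u\in\cc$ the first two coordinates $[Z_0,Z_1]=[1,u]$ recover $u$ and $Z_2/Z_0=v$ recovers $v$; on the chart $u\neq0$ (including $u=\infty$) the coordinates $[Z_0,Z_1]=[1/u,1]$ and $Z_3/Z_1=v$ do the same. I then need to identify the image with $\sQ^+$. Given a point of $\sQ$ with $Z_0\neq0$, normalise $Z_0=1$; then $Z_3=Z_1Z_2$ forces the point to be $[1,u,v,uv]$ with $u=Z_1$, $v=Z_2$, and it lies in the image of $\Phi$ iff $v=Z_2/Z_0\in\cc^+$, which is precisely the first bullet. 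Symmetrically, a point with $Z_1\neq0$ is $[w,1,wv',v']$ after normalising $Z_1=1$ (using $Z_0Z_3=Z_2$), hence equals $\Phi(1/w,v')$ — valid including $w=0$, giving $u=\infty$ — and it lies in the image iff $v'=Z_3/Z_1\in\cc^+$, the second bullet. A point of $\sQ$ with $Z_0=Z_1=0$ is $[0,0,Z_2,Z_3]$, lies in $\pi^{-1}(\infty)$ by \eqref{fibinf}, and satisfies neither bullet, so it is correctly excluded. Thus $\Phi$ is a biholomorphism $\cp^1\times\cc^+\xrightarrow{\sim}\sQ^+$.

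Finally I would assemble the conclusion: composing the biholomorphism $\hsr\cong\cp^1\times\cc^+$ of \eqref{product}/\eqref{uvu}, under which $\jj$ is by construction the product complex structure, with $\Phi$, yields a biholomorphism $(\hsr,\jj)\cong\sQ^+$; concretely it sends $q=Q_u^{-1}vQ_u$ to $[1,u,v,uv]$, and one checks directly (or invokes that $\pi\circ\Phi$ sends $(u,v)$ to $[1,q]$) that it is a section of $\pi$ over the affine line minus the real axis. I expect the only genuinely delicate point to be the bookkeeping at $u=\infty$ — making sure the parametrisation and both coordinate descriptions of $\sQ$ glue consistently there, and that the excluded locus $\{Z_0=Z_1=0\}=\pi^{-1}(\infty)$ accounts for exactly the difference between $\sQ$ and $\sQ^+$ together with the boundary $v\in\rr\cup\{\infty\}$ one has removed; everything else is a routine unwinding of the quadric equation $Z_0Z_3=Z_1Z_2$.
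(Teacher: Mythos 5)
Your proposal is correct and follows essentially the same route as the paper, which obtains the proposition directly from the computation $[1,q]=[Q_u,\,vQ_u]=\pi[1,u,v,uv]$ in the holomorphic coordinates $(u,v)$ for $\jj$ introduced just before. You merely make explicit the routine verifications the paper leaves implicit (the chart at $u=\infty$, the identification of the image with the two bullet conditions, and the exclusion of $\{Z_0=Z_1=0\}=\pi^{-1}(\infty)$), all of which are carried out correctly.
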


\bigbreak

Consider a quaternionic entire function

\begin{equation}\label{entire}
 f(q)=\sum_{n\in\nn}q^n a_n=\sum_{n\in\nn}q^n(b_n+c_nj),
\end{equation}
where $b_n,c_n\in\cc$. Repeating the calculation above with $q =
Q_u^{-1}v\kern1pt Q_u$, we have
\begin{equation}\label{repeat}
\begin{array}{rcl}\ds
[1,\ f(q)]\ =\ \Big[Q_u,\ \sum_{n\in\nn}v^nQ_ua_n\Big]
&=&\ds
\Big[1+uj,\ \sum_{n\in\nn}v^n(1+uj)(b_n+c_nj)\Big]\\[9pt]
&=&\ds
\Big[1+uj,\ \sum_{n\in\nn}v^n(b_n-u\bar c_n) +
\sum_{n\in\nn}v^n(c_n+u\ol b_n)j\Big]\\[12pt]
&=& \big[1+uj,\ g(v)-u\,\hah(v)+(h(v)+u\,\hag(v))j\big],
\end{array}
\end{equation}
where 
\begin{equation}\label{gbhc}
g(v)=\sum b_nv^n,\qquad h(v)=\sum c_nv^n
\end{equation}
are holomorphic functions, and 
$\hag(z) = \overline{g(\bar z)},\hah(z)=\overline{h(\bar z)}$
are obtained from $g,h$ by Schwarz reflection. 

This shows that the holomorphic mapping $F\colon\sQ^+\to\cp^3$ 
defined by
\begin{equation}\label{F}
F[1,\,u,\,v,\,uv]=\big[1,\ u,\ g(v)-u\,\hah(v),\ h(v)+u\,\hag(v)\big]
\end{equation}
satisfies $\pi\circ F=f\circ\pi$, and is therefore a
\emph{twistor lift} of $f$. 

\begin{rem} The conversion of $f$ into the two holomorphic 
  functions $g, h$ can be regarded as an application of the splitting
  lemma of \cite{advances}: for all $v$ in the plane $L_i = \cc$ we
  have $f(v)=g(v)+h(v)j$. The fact that $F$ preserves $u$ in the
    second slot is another way of expressing the formula \eqref{key}
    in proposition~\ref{induce}, and the linearity in $u$ reflects the
    fact that $F$ is really transforming lines on $\sQ^+$; we explain
    this below. If the coefficients $a_n$ are all real (so $h=0$),
    then $F$ maps $\sQ^+$ into the quadric $\sQ$. In this very special
    case, $f$ will in fact be $\jj$-holomorphic.
\end{rem}

The technique used in \eqref{repeat} can be generalised to other
domains in $\hsr$. This is accomplished by the next result that also
places the argument on a more rigorous footing.

\begin{thm}\label{lift}
Let $\Omega$ be a symmetric slice domain, and let $f:\Omega \to \hh$
be a regular function. Then $f$ admits a twistor lift to $\sO =
\pi^{-1}(\Omega\setminus \rr)\cap \sQ^+$; in other words, there exists
a holomorphic mapping $F\colon \sO\to \cp^3$ of the form
  \eqref{F} such that $\pi\circ F=f\circ\pi$.
\end{thm}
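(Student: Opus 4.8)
The plan is to reduce the general statement to the entire case already worked out in~\eqref{repeat}--\eqref{F}, by exploiting the local structure of a symmetric slice domain. First I would recall from~\eqref{domain} that $\Omega\setminus\rr\cong\cp^1\times V$ with $V\subseteq\cc^+$ open and connected, and that the twistor projection restricted to $\sO=\pi^{-1}(\Omega\setminus\rr)\cap\sQ^+$ is a biholomorphism onto $\Omega\setminus\rr$ by proposition~\ref{quadric}. So giving a holomorphic $F\colon\sO\to\cp^3$ of the form~\eqref{F} with $\pi\circ F=f\circ\pi$ amounts to producing a pair of holomorphic functions $g,h\colon V\to\cc$ such that, writing $q=Q_u^{-1}vQ_u$ as in~\eqref{uvu}, the point $[1,f(q)]$ equals $\pi[1,u,g(v)-u\hah(v),h(v)+u\hag(v)]$. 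The content is therefore entirely local in $v$.

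Next I would invoke the splitting lemma (quoted in the remark after~\eqref{F}): on the slice $\Omega_i=\Omega\cap L_i$, the restriction $f_i$ is holomorphic $L_i\to\hh\cong\cc\oplus\cc j$, so it splits as $f(v)=g(v)+h(v)j$ with $g,h\colon\Omega_i\cong V\to\cc$ holomorphic. I would then verify that the same computation as in~\eqref{repeat} goes through verbatim for the restriction to any slice $\Omega_I$: writing $q=x+Iy$ with $I=\phi(u,i)$, the representation property $q=Q_u^{-1}vQ_u$ together with the fact that $f$ is determined on $\Omega_I$ by $f_i$ via the symmetric-slice-domain structure (the ``representation formula'' of~\cite{advancesrevised}, which is exactly the identity underlying~\eqref{uvu} and $q^n=Q_u^{-1}v^nQ_u$) gives $f(q)=Q_u^{-1}f(v)Q_u=Q_u^{-1}(g(v)+h(v)j)Q_u$, and expanding $Q_u=1+uj$ reproduces $[1+uj,\ g(v)-u\hah(v)+(h(v)+u\hag(v))j]=\pi[1,u,g(v)-u\hah(v),h(v)+u\hag(v)]$. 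Define $F$ on $\sO$ by~\eqref{F} with these $g,h$; it is holomorphic because $g,h$ and hence $\hag,\hah$ are, and $\pi\circ F=f\circ\pi$ by the displayed identity. Finally I would check that $F$ maps into $\cp^3$ proper, i.e.\ the four homogeneous coordinates never vanish simultaneously: the first coordinate is $1\ne0$ in the chart $u\in\cc$, and on the patch $u=\infty$ one rehomogenises $[u_0,u_1]$ in the first two slots exactly as in~\eqref{pi}--\eqref{fibinf}, so the lift extends across $\{u=\infty\}\cap\sO$.

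The main obstacle is making precise the step ``$f(q)=Q_u^{-1}f(v)Q_u$'' for $q\notin L_i$, i.e.\ justifying that the slice-by-slice formula for $f$ on a general symmetric slice domain really is conjugation of the $L_i$-values, rather than merely that it holds for convergent power series as in~\eqref{entire}. This is where Theorem~\ref{lift} genuinely goes beyond the entire case: one must use that on a symmetric slice domain $f$ satisfies the representation formula, so that for $q=x+Iy$ one has $f(x+Iy)=\tfrac12(1-I_{}i)\,f(x+iy)+\tfrac12(1+I_{}i)\,f(x-iy)$ (up to the conventions of~\cite{advancesrevised}), and then to observe that this affine-in-$I$ expression coincides with $Q_u^{-1}f(x+iy)Q_u$ after one checks both sides agree at $I=\pm i$ and both are of the form $\alpha+\beta j$ with $\alpha,\beta$ affine in the stereographic coordinate $u$ of $I$; since two such affine expressions agreeing at two points agree everywhere, the identity follows. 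Once this bookkeeping is done, holomorphy of $F$ and the intertwining relation $\pi\circ F=f\circ\pi$ are immediate, and the form~\eqref{F} is built in by construction.
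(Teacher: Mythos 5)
Your overall strategy (split $f=g+hj$ on the slice $L_i$, then use the symmetric-slice structure to propagate to the other slices and read off \eqref{F}) is viable, but the identity you build it on is false. For a regular $f$ one does \emph{not} have $f(Q_u^{-1}vQ_u)=Q_u^{-1}f(v)Q_u$: conjugation by $Q_u$ commutes with the powers $q^n$ but not with the right coefficients. Concretely, take $f(q)=qj$ (so $g\equiv0$, $h(v)=v$), $v=i$, $u=i$, hence $Q_u=1+k$ and $q=Q_u^{-1}iQ_u=-j$. Then $f(q)=(-j)j=1$, while $Q_u^{-1}f(i)Q_u=(1+k)^{-1}k(1+k)=k$. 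The projective version fails too: $[Q_u,\,f(v)Q_u]$ has second slot $(g-\bar u\,h)+(gu+h)j$, which is not the expression $(g-u\hah)+(h+u\hag)j$ of \eqref{F} (in the example, $-\bar u v+vj\neq -uv+vj$), and is not even holomorphic in $u$. Your proposed patch via an ``affine in $u$'' comparison does not rescue this: $u\mapsto Q_u^{-1}f(v)Q_u$ is a rational expression in $u,\bar u$ (hence not affine, not holomorphic), and $f(x+Iy)$ is real-affine in $I\in\s$, which under the stereographic coordinate $u$ is not affine either; so the ``two affine maps agreeing at two points coincide'' argument has no purchase. Your identity holds only in the special case of real coefficients ($h=0$, $g=\hag$), i.e.\ exactly the $\jj$-holomorphic case noted after \eqref{F}.

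The correct way to run your argument is to keep $Q_u$ on the left and use the representation formula: if $I=Q_u^{-1}iQ_u$ then $Q_u\cdot\tfrac12(1-Ii)=1$ and $Q_u\cdot\tfrac12(1+Ii)=uj$, so
\begin{equation*}
Q_u\,f(Q_u^{-1}vQ_u)\;=\;f(v)+u\,j\,f(\bar v)\;=\;\big(g(v)-u\hah(v)\big)+\big(h(v)+u\hag(v)\big)j ,
\end{equation*}
the Schwarz reflections arising from moving $j$ past complex scalars, not from conjugation. With this identity, $[1,f(q)]=[Q_u,\,Q_uf(q)]$ gives \eqref{F} at once, with $g,h$ holomorphic on $\Omega_i$ by the splitting lemma. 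Note that this corrected route is genuinely different from the paper's proof, which never invokes the representation formula: there one expands $f$ around each sphere $x_0+y_0\s$ using Theorem~\ref{series}, conjugates the polynomials $P_n$ (whose coefficients are manipulated as $\widetilde A_n=Q_uA_nQ_u^{-1}$), and proves absolute and uniform convergence of the transformed series to obtain local holomorphic lifts; your (repaired) argument trades those convergence estimates for the representation formula and is shorter, but as written the central step is simply incorrect.
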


\begin{proof}
  The theorem will be proved if we show that for all
$$U=U(x_0+y_0\s,R) = \{q \in \hh : |(q-x_0)^2+y_0^2| < R^2\} \subseteq
\Omega\setminus \rr$$ the restriction $f_{|_U}$ can be lifted to a
holomorphic $F : \pi^{-1}(U) \cap \sQ^+\to \cp^3$.  By Theorem
\ref{series}, for each $q_0\in x_0+y_0\s$ there exists $\{A_n\}_{n \in
  \nn}\subset \hh$ such that for all $q \in U$
$$f(q) = \sum_{n \in \nn}P_n(q)A_n$$ where
$P_{2n}(q)=\big[(q-x_0)^2+y_0^2\big]^n$ and $P_{2n+1}(q) =
P_{2n}(q)(q-q_0)$. Now, if $q = Q_u^{-1}v\kern1pt Q_u$ then $P_{2n}(q)
= Q_u^{-1}P_{2n}(v)Q_u$ and 
\[P_{2n+1}(q) =
Q_u^{-1}P_{2n}(v)Q_u(Q_u^{-1}v\kern1pt Q_u-q_0) =
Q_u^{-1}P_{2n}(v)(v-Q_uq_0Q_u^{-1})Q_u\] so that
$$f(q) = Q_u^{-1}\sum_{n \in \nn}\widetilde P_n(v) Q_u
A_n=Q_u^{-1}\left(\sum_{n \in \nn}\widetilde P_n(v)\widetilde
A_n\right) Q_u$$ with $\widetilde A_n = Q_uA_nQ_u^{-1}$, $\widetilde
P_{2n}(v)= \big[(v-x_0)^2+y_0^2\big]^n$ and $\widetilde P_{2n+1}(v) =
\widetilde P_{2n}(v)(v-Q_uq_0Q_u^{-1})$. Let $V \subseteq \cc^+$ be
such that $U = \{x+Iy: I \in \s, x+iy \in
V\}$. Then by the same estimates used in \cite{expansion} to prove
theorem \ref{series}, $\sum_{n \in \nn}\widetilde P_n(v)\widetilde
A_n$ converges absolutely and uniformly on compact sets in $\cc \times
V$. 

\bigbreak

Furthermore, 
\begin{align*}
[1,f(q)]
&=\bigg[Q_u,\sum_{n \in \nn}\widetilde P_n(v) Q_u
  A_n\bigg] \\ &=\bigg[Q_u,\sum_{m \in \nn}\widetilde P_{2m}(v)
  \Big(Q_u A_{2m} + (v-Q_uq_0Q_u^{-1}) Q_u A_{2m+1}\Big)\bigg]\\ &=
\bigg[Q_u,\sum_{m \in \nn}\widetilde P_{2m}(v) \Big(Q_u A_{2m} + v Q_u
  A_{2m+1} -Q_uq_0 A_{2m+1}\Big)\bigg]\\ 
  &=\bigg[1+uj,\sum_{m \in
    \nn}\widetilde P_{2m}(v)\Big((1+uj) (A_{2m}-q_0 A_{2m+1})
  + v (1+uj) A_{2m+1} \Big)\bigg]\\[3pt]
  &= \bigg[1+uj,\sum_{m \in \nn}\widetilde P_{2m}(v)\Big\{ \Big(A_{2m}-q_0 A_{2m+1}
  + vA_{2m+1} \Big) +\\[-15pt]
&\hskip200pt u \Big(jA_{2m}-jq_0 A_{2m+1}
  + vjA_{2m+1} \Big) \Big\} \bigg]
  \end{align*}
Hence, there exist holomorphic $g,h\colon V \to \cc$ such that
$[1, f(q)]$ equals 
\[\Big[1+uj, g(v)+h(v)j + u\Big(\hag(v)j-\hah(v)\Big)\Big]
=\pi\big[1,\ u,\ g(v)-u\,\hah(v),\ h(v)+u\,\hag(v)\big],\]
as required.
\end{proof}

\begin{rem}
  Let $f_1,f_2$ be regular functions on a symmetric slice domain
  $\Omega$. If $g_1,h_1$ and $g_2,h_2$ are the couples of functions
  appearing in their twistor lifts according to formula \eqref{F} then
  $f_i(v) = g_i(v) + h_i(v)j$ for all $v \in\Omega_i= \Omega \cap \cc$ and $i=1,2$. By the
  definition of regular (or star) product in
    \eqref{prodottostar}, $f_1*f_2$ splits as
\begin{equation}\label{regularproduct}
f_1*f_2= (g_1g_2-h_1 \hah_2) +(g_1h_2+h_1\hag_2)j
\end{equation}
in $L_i=\cc$. Hence, the twistor lift of $f_1*f_2$ is immediately
determined by the lifts of $f_1$ and $f_2$.
\end{rem}

If $\Omega \subset \hh$ is a symmetric slice domain that is properly
included in $\hh$, the lifting $F$ of a regular function $f: \Omega
\to \hh$ does not, in general, extend to all of $\sQ^+$.

\begin{exa}
If $f:B(0,1) \to \hh$ is defined as $f(q)=\sum_{n \in \nn}q^{2^n}$
then for $q = Q_u^{-1} v\kern1pt Q_u$ 
$$[1,f(q)] = \pi\bigg[1,\ u,\ \sum_{n \in
    \nn}v^{2^n},\ u\!\sum_{n\in\nn}v^{2^n}\bigg],$$ where $\sum_{n \in
  \nn}v^{2^n}$ cannot be holomorphically extended near any point $v$
with $|v|=1$.
\end{exa}

The quadric $\sQ$ of proposition~\ref{quadric} is of course
biholomorphic to $\cp^1\times\cp^1$; the rulings are parametrized by
$u$ and $v$. An axially-symmetric sphere $x+y\s$ can be identified
with the line
\[
\ll_v = \big\{[1,\ u,\ x+iy,\ (x+iy)u]: u\in\cc\cup\{\infty\}\big\}
\]
in $\cp^3$ defined by fixing $v=x+iy$. We may regard $\ll_v$ as a
point in the Grassmannian $\Gr_2(\cc^4)$ that parametrizes lines in
$\cp^3$, or equivalently (by means of the Pl\"ucker embedding) as a
point in the Klein quadric in $\mathbb{P}(\ext^2\cc^4)=\cp^5$.

An important feature of $F$ is that it maps each line $\ll_v$ into
another \emph{line} in $\cp^3$, since the expression in \eqref{F} is
linear in $u$ (when the right-hand side is homogenised). This is also
a consequence of the fact, shown simply in \cite{advancesrevised,
  open}, that $f$ maps $x+y\s$ to another sphere $a+\s b$ (with
$a,b\in\hh$) in an affine manner. Indeed, the projection $\pi$
establishes a bijective correspondence between lines $\cp^1$ in
$\cp^3$ and ``2-spheres'' in $\rr^4$, where a ``2-sphere'' might be an
$S^2$ of finite radius (in some 3-space), or a 2-plane, or a point. A
detailed study of this correspondence is given in \cite{shapirog}.

The action of the unit imaginary quaternion $j$ on $\hh^2$, or equivalently
\begin{equation}\label{j}
 j\colon [Z_0,Z_1,Z_2,Z_3]\mapsto 
[-\bar Z_1,\bar Z_0,-\bar Z_3,\bar Z_2]
\end{equation}
on $\cc^4$, induces a real structure
\begin{equation}\label{sigma}
\sigma\colon[\zeta_1,\zeta_2,\zeta_3,\zeta_4,\zeta_5,\zeta_6]\mapsto
[\bar\zeta_1,\bar\zeta_5,-\bar\zeta_4,-\bar\zeta_3,\bar\zeta_2,\bar\zeta_6]
\end{equation}
on $\cp^5$, expressed here relative to the basis
$\{e^{01},e^{02},e^{03},e^{12},e^{13},e^{23}\}$ of $\bigwedge^2\cc^4$
(where $e^{ij}=e^i\wedge e^j$). A fixed point of $\sigma$ corresponds
to a $j$-invariant line in $\cp^3$, i.e.\ a fibre of $\pi$ or a
``2-sphere'' of zero radius.  The fixed point set must therefore by
parametrized by $S^4$, and one can see this directly as follows.

In the above coordinates, the Klein quadric is given by
\begin{equation}\label{Klein}
\zeta_1\zeta_6-\zeta_2\zeta_5+\zeta_3\zeta_4=0,
\end{equation}
Splitting the homogeneous coordinates $\zeta_i$ into their real and
imaginary parts, the equation of the fixed point set is
\[ x_1x_6-x_2^2-x_3^2-x_4^2-x_5^2=0,\]
and defines the real quadric $\mathscr N$ of null directions in the
projectivized Lorentz space $\mathbb{P}(\rr^{1,5})$. 
The situation is summarised by the diagram
\begin{equation}\label{diag}
\begin{array}{cccc}
\Gr_2(\cc^4)  && \subset & \cp^5\\[3pt]
\cup          &&         & \cup\\[3pt]
\hp^1 &\kern-16pt =S^4=\mathscr{N} &\subset& \mathbb{P}(\rr^{1,5})
\end{array}
\end{equation}
that reflects the usual $SO(1,5)$-invariant representation of the
conformal 4-sphere.\smallbreak

Now let $f\colon\Omega \to \hh$ be a regular function, where $\Omega$
is a symmetric slice domain and
$\Omega \setminus \rr\cong\cp^1\times V$ as in \eqref{domain}, and let
$F$ be its twistor lift. We may consider the mapping
\begin{equation}\label{grass}
\tF\colon\ V\longrightarrow\Gr_2(\cc^4)
\end{equation} 
defined by $v\mapsto F(\ll_v)$. 

\begin{defin} We shall call $\tF$ the \emph{twistor transform} of $f$.
\end{defin}

The idea is simple, and exploits the philosophy of Shapiro's paper
\cite{shapirog} that twistor geometry is the complexification of Lie
sphere geometry. The domain $\hh\setminus\rr$ is a disjoint union of
the sets $x+y\s$, which are mapped by $f$ to other spheres in $\hh$,
and $\tF$ merely encodes the induced map of spheres.

\begin{thm}\label{transform}
Let $\Omega$ be a symmetric slice domain, and let $V = \Omega \cap
\cc^+$ so that $\Omega \setminus \rr\cong\cp^1\times V$. If $f:\Omega
\to \hh$ is a regular function and $\tF$ is its twistor transform
\eqref{grass}, then $\tF$ extends to $\overline{V}$ setting $\tF(\bar
v) = \sigma(\tF(v))$, and consequently it defines a holomorphic curve
on $\Omega_i=\Omega \cap \cc$. Conversely any such ``real''
holomorphic curve $G\colon V\to\Gr_2(\cc^4)$ equals the transform
$\tF$ of a unique regular map $f:\Omega \to \hh$, provided
  $\zeta_6\circ G$ is never zero.
\end{thm}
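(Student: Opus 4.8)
The plan is to handle the two directions separately and, in both, to reduce everything to the local model provided by Theorem~\ref{lift} and formula \eqref{F}. For the forward direction, start from the twistor lift $F\colon\sO\to\cp^3$ of $f$ furnished by Theorem~\ref{lift}; for each $v\in V$, $F$ carries the line $\ll_v\subset\sQ^+$ to a line in $\cp^3$, and its Pl\"ucker coordinates are polynomial (in fact bilinear) in the coefficients of the two sides of \eqref{F}, hence holomorphic in $v$. Thus $\tF\colon V\to\Gr_2(\cc^4)$ is holomorphic. To see the reality condition, observe that $\pi\circ F=f\circ\pi$ and $f$ is real on $\Omega\cap\rr$, so $f$ commutes with quaternionic conjugation on symmetric spheres; translated through \eqref{j}, the lift $F$ intertwines the real structure on the source $\cp^3$ with that on the target, and on the level of lines in $\cp^5$ this is exactly $\sigma$. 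Concretely: the functions $\hag,\hah$ in \eqref{F} are the Schwarz reflections of $g,h$, so replacing $v$ by $\bar v$ and conjugating the homogeneous coordinates sends the line $F(\ll_v)$ to $F(\ll_{\bar v})$; chasing this through the Pl\"ucker map and \eqref{sigma} gives $\tF(\bar v)=\sigma(\tF(v))$. Since $\sigma$ is antiholomorphic, $\tF$ together with its $\sigma$-twin glues to a single holomorphic curve on the connected open set $\Omega_i=\Omega\cap\cc$ (which contains $V$, $\bar V$, and the real segment along which the two pieces agree by construction).

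For the converse, suppose $G\colon V\to\Gr_2(\cc^4)$ is a holomorphic curve with $G(\bar v)=\sigma(G(v))$ and with $\zeta_6\circ G$ nowhere zero. The line $G(v)\subset\cp^3$ must be reconstructed in the normalised form $[1,u,\ast,\ast]$ appearing in \eqref{F}: first I would use the nonvanishing of $\zeta_6$ — i.e.\ the Pl\"ucker coordinate $e^{23}$ — to conclude that $G(v)$ is never contained in a fibre of $\pi$ nor tangent to the relevant coordinate hyperplanes, so it meets the affine chart $Z_0\ne0$ in a line that projects isomorphically onto the $u$-line $\{Z_0=1,\,Z_1=u\}$; parametrising $G(v)$ accordingly writes it as $[1,u,\,p(v)+u\,q(v),\,r(v)+u\,s(v)]$ for holomorphic functions $p,q,r,s$ of $v$ (the affine-in-$u$ form is automatic because a line has a linear parametrisation). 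The reality hypothesis $G(\bar v)=\sigma(G(v))$, read off \eqref{sigma}, forces $q=-\hat p$ and $s=\hat r$ — precisely the shape $g(v)-u\,\hah(v),\ h(v)+u\,\hag(v)$ of \eqref{F} with $g:=p$, $h:=r$. Now define $f$ on $\Omega\cap\cc=\Omega_i$ by $f(v)=g(v)+h(v)j$ and extend it to all of $\Omega$ by the standard symmetric (``representation formula'') extension of \cite{advancesrevised}; this $f$ is regular on the symmetric slice domain $\Omega$, and by construction its twistor lift has the form \eqref{F} with these $g,h$, so $\tF=G$. Uniqueness follows from the identity principle: two regular functions on $\Omega$ with the same restriction to $\Omega_i$ coincide, and $\tF$ determines $g,h$, hence $f|_{\Omega_i}$.

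The step I expect to be the main obstacle is the converse reconstruction — specifically, justifying that the hypothesis $\zeta_6\circ G\ne0$ is exactly what is needed to put every line $G(v)$ simultaneously into the normal form $[1,u,\ast,\ast]$ with the two ``$\ast$'' entries \emph{affine} in $u$ and with \emph{holomorphic} coefficients $g,h$ in $v$. One must check that $G(v)$ never lies in the hyperplane $Z_0=0$ (otherwise the parametrisation by $u$ breaks down) and never in $Z_1=0$ in a way incompatible with the $v$-slot; here the Pl\"ucker relation \eqref{Klein} together with $\zeta_6\ne0$ pins down the coordinates $\zeta_1,\dots,\zeta_5$ so that the line always hits the chart correctly, and one reads $g,h$ as explicit ratios of Pl\"ucker coordinates, manifestly holomorphic since $\zeta_6$ is a nonvanishing denominator. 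I would also need to confirm that the resulting $g,h$ are single-valued on $V$ (no monodromy), which is immediate because $G$ itself is single-valued and the normalisation is canonical once $\zeta_6\ne0$. The remaining verifications — that the symmetric extension of $g+hj$ is regular, and that its lift is \eqref{F} — are routine given Theorem~\ref{lift} and the splitting lemma, so I would treat them briefly.
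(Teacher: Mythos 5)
Your proof is essentially correct, and in the forward direction it coincides with the paper's: the paper writes $F(\ll_v)$ by its equations \eqref{FL}, wedges the two coefficient vectors to obtain the explicit Pl\"ucker coordinates \eqref{-gh}, and reads off both holomorphy and the relation $\tF(\bar v)=\sigma(\tF(v))$ from the Schwarz-reflection identities $\hag(v)=\overline{g(\bar v)}$, $\hah(v)=\overline{h(\bar v)}$ --- exactly your ``concrete'' computation. (Your preliminary aside that $f$ is real on $\Omega\cap\rr$ and hence commutes with quaternionic conjugation is false for a general regular $f$, but it carries no weight in the argument you actually give.) In the converse you take a mildly different route: you recover the line in the form $[1,u,p+uq,r+us]$, impose reality, and then extend $g+hj$ from the slice $\Omega_i$ to a regular $f$ on $\Omega$ by the representation/extension formula of \cite{advancesrevised}; the paper instead sets $\zeta_6\equiv1$, puts $g=-\zeta_3$, $h=\zeta_2$, defines $f$ directly through the lift formula, and proves regularity by a self-contained computation showing that the operator $\frac{\partial}{\partial x}+(1+uj)^{-1}i(1+uj)\frac{\partial}{\partial y}$ annihilates the resulting expression, because $g,h,\hag,\hah$ are holomorphic. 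Your version outsources that verification to a known extension theorem and makes uniqueness explicit via the identity principle (the paper leaves it implicit); the paper's version avoids quoting the extension lemma and exhibits the regularity mechanism directly. Concerning the step you flagged as the main obstacle: $\zeta_6(G(v))\ne0$ means precisely that the line $G(v)$ is disjoint from $\pi^{-1}(\infty)=\{Z_0=Z_1=0\}$, so $[Z]\mapsto[Z_0,Z_1]$ restricts to an isomorphism on $G(v)$ and the parametrisation by $u=Z_1/Z_0$ gives the affine-in-$u$ normal form with $p,q,r,s$ holomorphic (ratios of Pl\"ucker coordinates with nonvanishing denominator $\zeta_6$), as you anticipated. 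One index slip should be corrected: with $Z_2=pZ_0+qZ_1$ and $Z_3=rZ_0+sZ_1$, the condition $G(\bar v)=\sigma(G(v))$ read through \eqref{sigma} forces the \emph{cross} relations $q=-\hat r$ and $s=\hat p$ (not $q=-\hat p$, $s=\hat r$); it is this coupling that reproduces the shape $Z_2=gZ_0-\hah Z_1$, $Z_3=hZ_0+\hag Z_1$ of \eqref{F} with $g=p$, $h=r$, after which your reconstruction and uniqueness argument go through.
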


\begin{rem}\label{zeta}
The condition $\zeta_6(G(v))=0$ asserts that $G(v)$ intersects the
fibre \eqref{fibinf} over $\infty$. If the intersection is proper,
then the projection $\pi(G(v))$ is a \emph{plane} in $\hh$ rather than
a 2-sphere. But a regular function $f$ must transform $x+\s y$ into a
genuine 2-sphere for all $v=x+iy\in \cc^+$, and it is this 2-sphere
that is the projection of $\tF(v)$. When one allows $f$ to have poles,
it is possible for $\tF(v)$ to properly intersect, or to equal,
$\pi^{-1}(\infty)$. The former happens if and only if $f$ has a
2-sphere of non-removable poles, one of which has order $0$. The
latter happens if and only if $f$ has a 2-sphere of non-removable
poles, all having positive order. (For the definition and
characterization of poles, see \cite{singularities}.) For a given
``real'' holomorphic curve $G\colon V\to\Gr_2(\cc^4)$, removing those
$v=x+iy \in V$ such that $\zeta_6\circ G(v)=0$, one can reconstruct a
regular $f$ that will have poles on the 2-spheres $x+\s y$; examples
are presented after the proof.
\end{rem}

\begin{proof}[Proof of Theorem~\ref{transform}] 
It is an easy matter to determine $\tF$ explicitly when $f$ is given
by \eqref{entire} and $g,h$ by \eqref{gbhc}. It follows that 
$F(\ll_v)$ has linear equations
\begin{equation}\label{FL}
 Z_2=gZ_0-\hah Z_1,\qquad Z_3=h Z_0+\hag Z_1,
\end{equation}
whose coefficients determine the vectors 
\[ [g,-\hah,-1,0],\qquad [h,\hag,0,-1],\]
that we can wedge together. Relative to our basis $\{e^{ij}\}$,
\begin{equation}\label{-gh}
\tF(v)=[\zeta_1,\ldots,\zeta_6]=[g\hag+\hah h,\ h,\ -g,\ \hag,\ \hah,\ 1].
\end{equation}
This confirms that $\tF$ is holomorphic. The final ``1'' fixes the
projective class, and we can ignore the quadratic term because
$\tF(v)$ must satisfy \eqref{Klein}, expressing the fact that the
4-form $\tF(v)\wedge\tF(v)$ vanishes. Comparing \eqref{-gh} with
\eqref{sigma}, and recalling that $\hag(v) = \overline{g(\bar v)}$,
$\hah(v) = \overline{h(\bar v)}$ shows that $\tF$ intertwines complex
conjugation with $\sigma$.

Conversely, given a mapping $G\colon V\to \Gr_2(\cc^4)$ such that 
$G(\bar v) = \sigma(G(v))$ and with $\zeta_6 \neq0$, we may
assume that $\zeta_6\equiv1$ and recover $g=-\zeta_3$ and $h=\zeta_2$,
and (for all $v \in V$ and $u \in \cc$) we set

\begin{align*}
 \Big[1, f\big((1+uj)^{-1}v (1+uj)\big)\Big]
 &=\pi\Big[1,\ u,\ g(v)-u\,\hah(v),\ h(v)+u\,\hag(v)\Big]\\ &=
 \Big[1+uj, g(v)+h(v)j + u\Big(\hag(v)j-\hah(v) \Big) \Big]
\end{align*}
In order to verify that $f$ is regular, we compute:
\begin{align*}
&\left(\frac{\partial}{\partial x}+(1+uj)^{-1}i
  (1+uj)\frac{\partial}{\partial y} \right)f\left(x+(1+uj)^{-1}i
  (1+uj) y\right)\\ &= \left(\frac{\partial}{\partial x}+(1+uj)^{-1}i
  (1+uj)\frac{\partial}{\partial y} \right) (1+uj)^{-1}\Big\{
  g(x+iy)+h(x+iy)j\\[-10pt]
&\hskip260pt + u[\hag(x+iy)j-\hah(x+iy)]\Big\}\\ 
&= (1+uj)^{-1}\left(\frac{\partial}{\partial
    x}+i\frac{\partial}{\partial y} \right) \left\{ g(x+iy)+h(x+iy)j +
  u[\hag(x+iy)j-\hah(x+iy)]\right\},
\end{align*}
which is identically zero.
\end{proof}

\begin{exas}\label{phen}
An example of the phenomenon mentioned in Remark~\ref{zeta} is
provided by the curve
  \[ G(v) = [1,\ 0,\ i-v,\ v+i,\ 0,\ v^2+1] = \bigg[\frac1{v^2+1},\ 0,
  -\frac1{v+i},\ \frac1{v-i},\ 0,\ 1\bigg].\] This satisfies the
  reality condition, and 
  \[G(-i)=[1,0,2i,0,0,0],\qquad G(i)=[1,0,0,2i,0,0]\] are lines that
  properly intersect $\pi^{-1}(\infty)$. Retracing our steps back as
  in the previous proof, we recover $g(v)=1/(v+i)$ and $h\equiv0$, and
  \[ f(q) = Q_u^{-1}\!\Big(\frac1{v+i}+\frac u{v-i}j\Big)
  =(q^2+1)^{-1}(q-i),\] in the notation \eqref{uvu}. This is the
  so-called \emph{regular reciprocal} of $q+i$, also denoted by
  $(q+i)^{-*}$. It has a non-removable pole of order $0$ at $q=i$, and 
  poles of order $1$ at all other points of $\s$ (see \cite{open,singularities}).
  
  Let us consider a second example: a ``real'' curve $G$ giving rise
  to the regular function $f(q)=(q^2+1)^{-2}(q-i) =
  (q^2+1)^{-1}(q+i)^{-*}$, which has a sphere of non-removable poles
  of strictly positive order. It is
  \begin{align*}
 G(v) &= [1,\ 0,\ (v^2+1)(i-v),\ (v^2+1)(v+i),\ 0,\ (v^2+1)^3]\\ &=
 \bigg[\frac1{(v^2+1)^3},\ 0,-\frac1{(v^2+1)(v+i)},\ 
\frac1{(v^2+1)(v-i)},\ 0,\ 1\bigg],
  \end{align*}
  which equals $[1,0,0,0,0,0]$, i.e.\ $\pi^{-1}(\infty)$, when $v=\pm
  i$.
\end{exas}

\medbreak

If $f_r\colon\rr\to\hh$ is real analytic, we can extend
  it to a regular function $f\colon\Omega\to\hh$ for some symmetric
  slice domain $\Omega$ neighbouring $\rr$. The twistor transform of
  $f$ is none other than the \emph{complexification} of $f_r$,
  obtained using \eqref{diag}. If $f_r$ is algebraic then $\tF$ will
  be a rational curve. 

Examples~\ref{phen} fit into this scheme, and in section \ref{squartic}, 
we shall see an example in which $\tF$ is a rational quartic curve, see
  \eqref{tt}. However, an even simpler case arises when $f_r$ is the
  identity. For then
  \[ \tF(v)=\ell_v=[v^2,\ 0,\ -v,\ v,\ 0,\ 1]\] is defined for
    all $v\in\cp^1$, and the image of $\tF$ is a conic in some
    $\cp^2$. Indeed, $\ext^2(\cc^4)$ is the complexification of the
    Lie algebra
\[\mathfrak{so}(2,\hh)\cong \mathfrak{sp}(1)\oplus\mathfrak{sl}(2,\rr)
\cong \mathfrak{so}(3)\oplus\mathfrak{so}(1,2),\] and the $\cp^2$
arises by projectivizing the final summand.

To conclude, our focus on axially-symmetric spheres and the complex
structure $\jj$ has reduced the conformal group to $SO(2,\hh)$. This
provides the extra structure on the Klein quadric with which to study
regular functions.  We point out that not only do the second and third
slot in the twistor transform \eqref{-gh} determine the twistor lift,
but they also enable one to compute the transform of a star product,
thanks to formula \eqref{regularproduct}. In this way,
\eqref{prodottostar} is converted into a multiplication within the
class of holomorphic curves under consideration in
Theorem~\ref{transform}.

\begin{rem} \label{orthog} Let us consider a domain 
$V \subseteq \cc^+$ and a holomorphic curve $G\colon V\to\Gr_2(\cc^4)$
  which is not necessarily real (that is, which does not necessarily
  extend to a domain in $\cc$ which is symmetric with respect to the
  real axis). Then, provided $\zeta_6\circ G \neq 0$, the curve $G$
  will still define a mapping $F\colon \sO\to\cp^3$ on an open subset
  $\sO\cong\cp^1 \times V$ of $\sQ^+$ as in \eqref{F}, except that
  $g,\hag$ and $h,\hah$ are now unrelated. We can therefore define
  $f\colon\pi(\sO)\to\hh$ so that $f\circ\pi=\pi\circ F$. This $f$
  will be a regular function that does not necessarily extend to a
  symmetric slice domain, but that still maps $\jj$ to the orthogonal
  complex structure transforming each $\vv\in T_{f(q)} f(\pi(\sO))
  \cong \hh$ to $I_q\vv$.
\end{rem}

\section{Removing a parabola}\label{removing}

In this section, we will investigate the possibility of defining a
non-constant orthogonal complex structure on $\hh\setminus\pgamma$, where
$\pgamma$ is the parabola
$$\pgamma = \{t^2+it : t \in \rr\},$$ via the regular function $f(q) =
q^2+qi$ (which maps $\rr$ onto $\pgamma$).

\begin{thm}\label{2to1}
Let $f(q) = q^2+qi$ on $\hh$. Its degenerate set $D_f$ is empty and
its singular set $N_f$ is the $2$-plane $-i/2 + j\rr+k\rr$, whose
image is the paraboloid of revolution
\begin{equation}\label{oid}
\pGamma = \Big\{x_0+jx_2+kx_3: x_0,x_2,x_3 \in \rr,\ x_0 = 
\frs14-(x_2^2+x_3^2)\Big\}. 
\end{equation}
The restriction $f: N_f \to \pGamma$ is a bijection. Furthermore,
$f(\hh\setminus N_f) = \hh \setminus \pGamma$ and the inverse image of
every $c\in\hh\setminus \pGamma$ consists of two points. The mapping
$f\colon\hh\to\hh$ is therefore a double covering branched over
$\pGamma$ and $f(\hh\setminus \rr) = \hh \setminus \pgamma$.
\end{thm}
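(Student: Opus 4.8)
The plan is to analyze the regular function $f(q) = q^2+qi$ directly using the machinery of sections~\ref{diff} and~\ref{twist}. First I would compute the coefficients $A_n$ of the expansion \eqref{expansion2} at a generic point $q_0 = x_0+Iy_0$. Since $f$ is a quadratic polynomial, only $A_0, A_1, A_2$ are nonzero, and a short computation gives $A_2 = 1$ and $A_1 = 2x_0 + i$ (the derivative-type coefficient), with $A_0 = f(q_0)$. By Proposition~\ref{classification}, since $A_2 \neq 0$, the differential $f_*$ is never rank $0$; and $f_*$ fails to be invertible at $q_0 \notin \rr$ precisely when $1 + 2\Im(q_0)A_2A_1^{-1} \in L_I^\perp$, i.e.\ when $1 + 2\Im(q_0)(2x_0+i)^{-1} \in L_I^\perp$. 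Working this out (using that $\Im(q_0) = I y_0$ and expanding in the plane $L_I$) should force $x_0 = -1/4\,$? — more carefully, one finds the condition pins down $\Re(q_0)$, giving the $2$-plane $-i/2 + j\rr + k\rr$; I would also check via the last sentence of Proposition~\ref{classification} that no real point lies in $N_f$ since $A_1 = 2x_0+i \neq 0$ there, and that $D_f$ is empty since $A_1$ never vanishes. That $f(N_f) = \pGamma$ and that $f\colon N_f \to \pGamma$ is a bijection is then a direct substitution: parametrize $N_f$ as $q = -i/2 + jx_2 + kx_3$ and compute $q^2 + qi$ explicitly, matching \eqref{oid}.

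For the covering statement, the cleanest route is to solve $q^2 + qi = c$ for fixed $c \in \hh$. Completing the square, $q^2+qi = (q + i/2)^2 - (i/2)(\text{noncommutativity correction})$ — but since $q$ and $i$ need not commute, I must be careful: write $q^2 + qi = c$ and treat this as a quadratic over $\hh$ of the form studied in section~\ref{removing}'s reference to $q^2 + qi + c$. The number of solutions of such a quadratic is governed by Example~\ref{roots} and the companion results cited: generically two roots, with a single (double) root or a whole sphere of roots in degenerate cases. I would show that $c \in \pGamma$ is exactly the locus where the two roots coalesce (branch locus), using the explicit discriminant-type condition, and that for $c \notin \pGamma$ there are exactly two distinct preimages, neither of which lies on $N_f$. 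This gives $f(\hh \setminus N_f) = \hh \setminus \pGamma$ and the double-cover assertion.

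The final clause $f(\hh\setminus\rr) = \hh\setminus\pgamma$ requires showing two things: that $f$ maps $\hh\setminus\rr$ into $\hh\setminus\pgamma$, and that it is onto. Since $f$ maps $\rr$ onto $\pgamma$ by construction, injectivity considerations (the two preimages of a point of $\pgamma$) are what matter: I would argue that each point of $\pgamma = f(\rr)$ has its \emph{full} preimage inside $\rr$ (equivalently, no point of $\hh\setminus\rr$ maps into $\pgamma$), which follows from the branched-cover structure — a point $t^2+it \in \pgamma$ is the image of the real point $t$, and either it has a second preimage that is also real, or it lies on the branch locus $\pGamma$; one checks $\pgamma \cap \pGamma$ and the second preimage explicitly. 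Surjectivity onto $\hh\setminus\pgamma$ then follows from $f(\hh) = \hh$ (a quaternionic quadratic is surjective — every $c$ has a root) combined with $f^{-1}(\pgamma) \subseteq \rr$.

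The main obstacle is the noncommutativity in solving $q^2 + qi = c$: one cannot simply complete the square, and the structure of the solution set of a quadratic like $q^2 + qi + c$ over $\hh$ (how many roots, when they collide, when a whole sphere appears) is exactly the delicate point. I expect the cleanest handling is to invoke the theory of the polynomial $q^2 + qi + c$ alluded to in the section introduction — its roots via the symmetrized polynomial $f^s$ and Example~\ref{roots} — rather than brute-force quaternion algebra, and to identify the branch locus $\pGamma$ with the set of $c$ for which $f^s$ (a genuine complex-coefficient polynomial in $q$ after symmetrization, here $(q^2+qi)^s(q-c)\cdots$ type expression) has a repeated root pattern. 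Tying that algebraic degeneracy back to the differential-geometric $N_f$ computed via Proposition~\ref{classification} is the conceptual crux; once both descriptions of $N_f$ and $\pGamma$ agree, the remaining assertions are bookkeeping.
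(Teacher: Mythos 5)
Most of your plan is sound and runs close to the paper. Your coefficients are right: $A_0=f(q_0)$, $A_1=2x_0+i$, $A_2=1$, so $A_1$ never vanishes, $D_f=\varnothing$, and no real point is singular. The criterion $1+2\Im(q_0)A_2A_1^{-1}\in L_I^\perp$ of Proposition~\ref{classification} does pin down $N_f$, but the outcome is not ``$x_0=-\frac14$'': it is $\Re(q_0)=0$ together with the $i$-component of $\Im(q_0)$ equal to $-\frac12$, i.e.\ $N_f=-\frac{i}{2}+j\rr+k\rr$; the substitution $f\big(-\frac{i}{2}+wj\big)=\frac14-|w|^2-wk$ then gives the bijection onto $\pGamma$, exactly as in the paper. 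For the covering statement your route through the factorisation $q^2+qi-c=(q-\alpha)*(q-\beta)$ and Example~\ref{roots} is the paper's route: note $\alpha+\beta=-i$ excludes $\beta=\bar\alpha$ (so no spherical zero set), and the remaining degenerate case (both roots in one sphere, hence a \emph{single} preimage of isolated multiplicity $2$) is precisely membership in $N_f$ by Proposition~\ref{characterization}. The paper instead finds $N_f$ by computing the second root $(\alpha-\bar\beta)\beta(\alpha-\bar\beta)^{-1}$ explicitly and locating where the two preimages coalesce; your derivation of $N_f$ from Proposition~\ref{classification} is a legitimate variant that then makes the discriminant analysis of $f^s$ unnecessary. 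Do spell out that for $c\in\pGamma$ the \emph{entire} preimage is the one point of $N_f$ (Example~\ref{roots}, case 2), since that is what yields $f(\hh\setminus N_f)\subseteq\hh\setminus\pGamma$.

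The genuine gap is your argument for the last clause. You propose to show that ``no point of $\hh\setminus\rr$ maps into $\pgamma$'', i.e.\ $f^{-1}(\pgamma)\subseteq\rr$. This is false, and your own preimage description shows it: for $\alpha=t\in\rr$ the second root of $q^2+qi-f(t)$ is $\beta=-t-i$, so $f^{-1}(t^2+it)=\{t,\,-t-i\}$; for instance $f(-i)=(-i)^2+(-i)i=0\in\pgamma$. Hence $f^{-1}(\pgamma)=\rr\cup\{s-i:s\in\rr\}$ and $f(\hh\setminus\rr)$ is actually all of $\hh$, so the displayed equality cannot be proved as you sketch; what does hold, and what the sequel (the restrictions of $f$ to the half-spaces $\hh^+$, $\hh^-$ in Proposition~\ref{extends}) actually uses, is the pair of facts $f(\rr)=\pgamma$ (so $f^{-1}(\hh\setminus\pgamma)\subseteq\hh\setminus\rr$) and, via the fundamental theorem of algebra, $\hh\setminus\pgamma\subseteq f(\hh\setminus\rr)$. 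Replace your final step by a verification of these two inclusions; be aware that the paper's own proof ends with the bijection $N_f\to\pGamma$ and does not argue this clause, so you cannot lean on it either.
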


\begin{proof}
For any given $c \in \hh$, the polynomial $f(q)-c = q^2+qi-c$ has at
least one root by the quaternionic fundamental theorem of algebra
(see, for instance, \cite{fundamental}). Hence $f^{-1}(c)$ contains at
least a point $\alpha$. Therefore, $q^2+qi-c$ can be factored as
$(q-\alpha)*(q-\beta)$ for some $\beta \in \hh$. According to example
\ref{roots}, either
\[ f^{-1}(c)=\{\alpha,\ (\alpha -\bar \beta) \beta (\alpha -\bar
\beta)^{-1}\}\] or $\alpha$ is a point of the degenerate set $D_f$ and
$\beta = \bar \alpha$. But the latter is excluded because here
  $\alpha+\beta=-i$. Hence, $D_f=\varnothing$.

Moreover,
\[(\alpha -\bar \beta) \beta (\alpha -\bar \beta)^{-1} = 
(\alpha +\bar\alpha -i) (-\alpha-i)(\alpha +\bar \alpha -i)^{-1}.\] 
We split $\alpha = z+wj$ with $z=x_0+ix_1$ and $w=x_2+ix_3$ in
    $\cc$. Thus, $\bar\alpha = \bar z-wj$ and
\[\begin{array}{rcl}
(\alpha +\bar \alpha -i) (-\alpha-i) (\alpha +\bar \alpha -i)^{-1}
&=& (z+\bar z-i)(-z-i-wj)(z+\bar z-i)^{-1}\\[6pt] 
&=& -z-i -(z+\bar z-i)(z+\bar z+i)^{-1}wj\\[5pt]
&=&\ds -z-i +\frac{1-(z+\bar z)^2+ 2(z+\bar z)i}{1+(z+\bar z)^2}wj\\[12pt]
&=& -(z+i/2)-i/2 + e^{2\theta i}wj,
\end{array}\]
  where $\tan\theta= z+\bar z$. This corresponds to rotating $z$
  by $180^\mathrm{o}$ about the point $-i/2$, and rotating $w$ by an
  angle $2\theta$ about the origin. Thus,
\begin{equation}\label{preimage}
\textstyle f^{-1}(c) = 
\left\{z+wj,\ -(z+\frac12i)-\frac12i+e^{2i\theta}wj\right\}
\end{equation}
consists of two distinct points except when $z+\bar z = 0$ and
$z=-(z+i/2)-i/2$, which means $z = -i/2$. Hence, the singular set
$N_f$ is the $2$-plane $-i/2 +\rr j+\rr k$. 

We complete the proof observing that 
\[\textstyle f(-\frac12i+wj) = \frs14 - |w|^2 - w k,\] 
so that $f$ maps $N_f$ bijectively into the paraboloid $\pGamma$
defined by \eqref{oid}.
\end{proof}

\begin{center}
\vspace{20pt}
\scalebox{.95}{\includegraphics{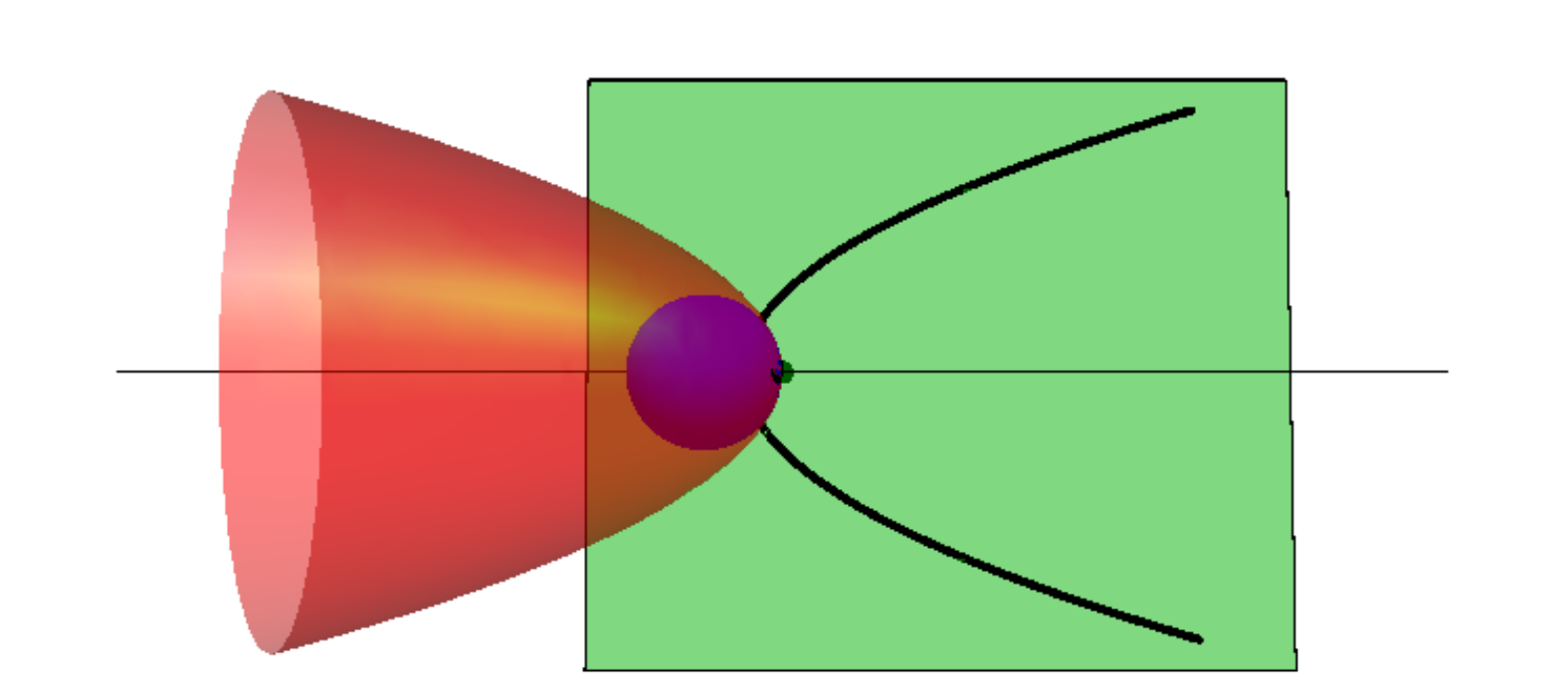}}
\vspace{0pt}
\end{center}

\parshape 1 18pt 404pt \noindent {\it Figure 1.} In $\rr^4$, the paraboloid
$\pGamma$ intersects the plane $L_i$ containing $\pgamma$ 
in the parabola's focus $(\frac14,0,0,0)$. It intersects any $3$-space
containing $\pgamma$ in a parabola with vertex at this focus. The
significance of the osculating sphere
$(x_0+\frac14)^2+x_2^2+x_3^2=\frac14$ to $\pGamma$ is explained in
section \ref{squartic}, see \eqref{osc}.

\vspace{20pt}

Inspecting the proof of the previous lemma, we find that $f$ is
injective when restricted either to the open right half-space
$$\hh^+ = \{x_0+i x_1+jx_2+kx_3 : x_0 >0\}$$ or to the open left half
space $\hh^-$. It is easily seen that their boundary $i\rr+j\rr+k\rr$
maps onto the ($3$-dimensional) solid paraboloid
$$\PGamma = \Big\{x_0+jx_2+kx_3 : x_0,x_2,x_3 \in\rr,\ 
x_0\le\frs14-(x_2^2+x_3^2)\Big\}$$ bounded by $\pGamma$ in the
3-space $\rr+j\rr+k\rr$, and that $f(\hh^+) = \hh \setminus\PGamma =
f(\hh^-)$. Note that only $\pGamma$ is of fundamental significance;
distinguishing the half spaces $\hh^+,\hh^-$ and boundary $i\rr+j\rr+k\rr$ 
determines the ``inside'' of the paraboloid, but this choice is not unique.

\bigbreak

We next use our choices to define single-valued complex structures on
a dense open set of $\hh$:

\begin{prop}\label{extends}
Let $f(q) = q^2+qi$. Let $\jj^+$ denote the complex structure induced
by the restriction $f:\hh^+ \to \hh$ on $\hh \setminus (\pgamma \cup
\PGamma)$ and let $\jj^-$ be induced by $f : \hh^- \to \hh$. Then $\hh
\setminus (\pgamma \cup \PGamma)$ is the maximal open domain of
definition for both $\jj^+$ and $\jj^-$. Indeed, both $\jj^+$ and
$\jj^-$ extend continuously to $\pGamma$ but neither of them extends
continuously to any point of $\PGamma \setminus \pGamma$.
\end{prop}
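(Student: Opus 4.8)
The plan is to describe $\jj^\pm$ completely explicitly through \eqref{key}: on its domain one has $\jj^\pm_c\vv=I_{q^\pm(c)}\vv$, where $q^\pm(c)$ is the unique root of $f(q)=c$ lying in $\hh^\pm$. This is legitimate because, as observed after Theorem~\ref{2to1}, $f|_{\hh^\pm}$ is injective with image $\hh\setminus\PGamma$, and its differential is everywhere invertible since the singular set $N_f=-i/2+j\rr+k\rr$ is contained in $\{\Re=0\}$ and so misses $\hh^\pm$; thus $f|_{\hh^\pm}\colon\hh^\pm\to\hh\setminus\PGamma$ is a diffeomorphism and Proposition~\ref{induce} applies. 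Since $\jj$ is undefined on $\rr$, the structure $\jj^+$ is a priori defined on $f(\hh^+\setminus\rr)=(\hh\setminus\PGamma)\setminus\{t^2+it:t>0\}$ and $\jj^-$ on $(\hh\setminus\PGamma)\setminus\{t^2+it:t<0\}$; as $f(0)=0\in\PGamma$, the common part of these two sets is exactly $\hh\setminus(\pgamma\cup\PGamma)$. Maximality will come out of the continuous‑extension analysis that follows.

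First I would settle the extension to $\pGamma$. Fix $p\in\pGamma$ and a sequence $c_n\to p$ with $c_n\in\hh\setminus(\pgamma\cup\PGamma)\subseteq\hh\setminus\PGamma=f(\hh^\pm)$. From $|f(q)|\ge|q|^2-|q|$ the points $q^\pm(c_n)$ remain in a fixed compact set, so every subsequential limit $q_*$ satisfies $f(q_*)=p$. But by Theorem~\ref{2to1} the restriction $f\colon N_f\to\pGamma$ is bijective and $f(\hh\setminus N_f)=\hh\setminus\pGamma$, so $f^{-1}(p)$ is the single point $q_0:=(f|_{N_f})^{-1}(p)\in N_f$; hence $q^\pm(c_n)\to q_0$. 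Since $q_0=-\tfrac i2+wj$ has $\Re q_0=0$ while $\Im q_0\ne0$, it lies off $\rr$, so $q\mapsto I_q$ is continuous near $q_0$ and $\jj^\pm_{c_n}\vv=I_{q^\pm(c_n)}\vv\to I_{q_0}\vv$. Thus both $\jj^+$ and $\jj^-$ extend continuously to every point of $\pGamma$, there coinciding with $\vv\mapsto I_{q_0}\vv$.

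Next I would rule out a continuous extension across $\PGamma\setminus\pGamma$. Fix $p\in\PGamma\setminus\pGamma$. Because $p\notin f(\hh^\pm)=\hh\setminus\PGamma$ while $p\notin\pGamma$, the equation $f(q)=p$ has two distinct roots (two by Theorem~\ref{2to1}), both on $i\rr+j\rr+k\rr=\{\Re=0\}$; writing $p=y_0+jy_2+ky_3$, the computation in the proof of Theorem~\ref{2to1} identifies them as
\[q_{1,2}=\big(-\tfrac12\pm\delta\big)i-y_3\kern1pt j+y_2\kern1pt k,\qquad \delta:=\tfrac12\sqrt{\,1-4(y_0+y_2^2+y_3^2)\,}>0.\]
Neither $q_i$ lies in $N_f$ (its $i$-component is $-\tfrac12\pm\delta\ne-\tfrac12$), so $f$ is a local diffeomorphism at each of them. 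Hence for $i=1,2$ we may pick $\widetilde q_i^{\,(n)}\in\hh^+$ with $\widetilde q_i^{\,(n)}\to q_i$ and $c_i^{(n)}:=f(\widetilde q_i^{\,(n)})\in\hh\setminus(\pgamma\cup\PGamma)$ — a full neighbourhood of $q_i$ covers a full neighbourhood of $p$, and $\pgamma\cup\PGamma$ has empty interior near $p$ — so that $c_i^{(n)}\to p$ and $\jj^+_{c_i^{(n)}}\vv=I_{\widetilde q_i^{\,(n)}}\vv\to I_{q_i}\vv$. A direct sign analysis of the displayed roots gives $I_{q_1}\ne I_{q_2}$ ($q_1$ and $q_2$ are not positive real multiples of one another); therefore $\jj^+$ cannot have a continuous limit at $p$, and the same applies verbatim to $\jj^-$. (The vertex $p=0$, which also lies on $\pgamma$, is handled the same way, now using that $q^+(c)$ may tend to $0\in\rr$.) Together with the previous paragraph this exhibits $\hh\setminus(\pgamma\cup\PGamma)$ as the maximal common domain.

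The main obstacle is the bookkeeping in the third paragraph. One has to be certain (i) that $f$ really is a local diffeomorphism at both boundary roots $q_1,q_2$ — this is exactly why it matters that $N_f$ meets $\{\Re=0\}$ only along $\{x_1=-\tfrac12\}$, i.e.\ over $\pGamma$ — and (ii) that the two imaginary units $I_{q_1},I_{q_2}\in\s$ are genuinely distinct; the sign computation in (ii) is the one real calculation, and it should be carried out with care when $p$ lies on $\rr$. Everything else is soft: the growth bound on $f$, the covering picture of Theorem~\ref{2to1}, and the continuity of $q\mapsto I_q$ on $\hh\setminus\rr$.
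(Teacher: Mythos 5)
Your overall strategy is the one the paper itself uses: describe $\jj^\pm$ through \eqref{key} as $c\mapsto I_{q^\pm(c)}$, obtain the continuous extension to $\pGamma$ from the fact that a point of $\pGamma$ has a unique preimage, lying on $N_f$ and off $\rr$, and try to destroy continuity at $p\in\PGamma\setminus\pGamma$ by exhibiting the two preimages $q_1,q_2$ on $\{\Re=0\}$ as limits of $q^+(c)$ with $I_{q_1}\ne I_{q_2}$. Your properness argument (the growth bound plus uniqueness of the preimage on $N_f$) replaces the paper's appeal to the open mapping theorem for the continuity of $f^{-1}$ on $(\hh\setminus\PGamma)\cup\pGamma$, and your use of the local diffeomorphism at $q_1,q_2$ to show that both roots are actually attained as limits of $q^+$ makes explicit a step the paper leaves implicit. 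Up to that point the proposal is sound.

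The genuine gap is exactly the step you deferred: the asserted inequality $I_{q_1}\ne I_{q_2}$ does not hold at every point of $\PGamma\setminus\pGamma$. With your notation $q_{1,2}=(-\frac12\pm\delta)i-y_3j+y_2k$, take $y_2=y_3=0$ and $0<y_0<\frac14$, i.e.\ $p$ on the open segment of the real axis between the vertex of $\pgamma$ and the focus $\frac14$; such $p$ lies in $\PGamma\setminus\pGamma$, and there $\delta<\frac12$, so both roots are \emph{negative} real multiples of $i$ and $I_{q_1}=I_{q_2}=-i$ (concretely, $f(-0.1i)=0.09=f(-0.9i)$). At these points your construction produces no two distinct limiting values; worse, combined with your own compactness argument it shows $I_{q^\pm(c)}\to-i$ along \emph{every} approach, i.e.\ the method yields a continuous extension there instead of excluding one. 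So the claim ``no continuous extension at any point of $\PGamma\setminus\pGamma$'' cannot be established by this computation on that segment (the paper's own proof asserts the same inequality ``by direct computation'', so this is a defect shared with the published argument rather than one you introduced; the inequality is correct whenever $(y_2,y_3)\ne(0,0)$ or $y_0<0$, and non-extension at those off-axis points is still enough for the maximality assertion). Finally, your parenthetical for $p=0$ is not ``the same way'': there one root is $0\in\rr$, so $I_{q_1}$ is undefined, and you need a genuinely different argument — for instance approaching $0$ inside $L_i$ from the two sides of the real axis outside the parabola, where the slice computation behind Theorem~\ref{JJJJ} gives the distinct limits $+i$ and $-i$.
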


\begin{proof}
The function $f$ is injective in $\hh^+\cup N_f$. Its inverse
$$f^{-1}:(\hh\setminus \PGamma) \cup \pGamma\to \hh$$ 
is continuous by the open mapping theorem, since the degenerate set of
$f$ is empty. Hence, $p \mapsto I_{f^{-1}(p)}$ is continuous and the
structure $\jj^+$ can be extended continuously to $\pGamma$ setting
$\jj^+_{p}\vv=I_{f^{-1}(p)}\vv$ for all $p\in \pGamma$.

Now, let $p \in \PGamma\setminus \pGamma$: since $f^{-1}$ is defined and
continuous on $(\hh\setminus \PGamma) \cup \pGamma$, the structure 
$\jj^+$ admits a continuous extension to $p$ if and only if for $a \in \hh 
\setminus (\PGamma\cup \pgamma)$
$$\lim_{a \to p} I_{f^{-1}(a)}$$ exists. We saw in the proof of the
previous lemma that $f^{-1}(p) = \{q, q'\}$ with $q= ti+wj$ and
$q'=-(t+1)i+wj$ for some $t >-1/2$ and $w\in\cc$. Since $q$ and
$q'$ are both in $\overline{\hh}^+$, the aforementioned limit exists
if and only if $I_q = I_{q'}$. However, by direct computation
$$I_{q} = \frac{ti+wj}{|ti+wj|}\ne\frac{-(t+1)i+wj}{|-(t+1)i+wj|} 
= I_{q'},$$ and the proof is complete.\end{proof}

Finally, let us compare the two structures that we introduced.

\begin{thm}\label{JJJJ}
The structures $\jj^+$ and $\jj^-$ have the following properties on
$\hh \setminus (\pgamma \cup \PGamma)$:
\begin{enumerate}
\item The four structures $\jj^+,\jj^-,-\jj^+,-\jj^-$ are distinct
  outside of $L_i=\cc$.
\item At every point of $\{x_0+ix_1: x_0 > x_1^2\}$, the
  structures $\jj^+$ and $\jj^-$ both coincide with the left
  multiplication by $-i$.
\item At every point of $\{x_0+ix_1: x_0 < x_1^2\}$, the structures 
  $\jj^+$ and $-\jj^-$ coincide; in $\{x_0+ix_1:\quad x_0 <
  x_1^2,\ x_1>0\}$, they both coincide with left multiplication by
  $i$; in $\{x_0+ix_1: x_0 < x_1^2,\ x_1<0\}$, they coincide with the
  left multiplication by $-i$.
\end{enumerate}
At every point of the paraboloid $\pGamma$, the extended structures
$\jj^+$ and $\jj^-$ coincide.
\end{thm}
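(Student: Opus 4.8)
The plan is to transport everything to the half-spaces $\hh^+,\hh^-$ via $f$ and compare the two preimages of a given point. Fix $p\in\hh\setminus(\pgamma\cup\PGamma)$. Since $p\notin\PGamma$, Theorem~\ref{2to1} gives $f^{-1}(p)=\{q^+,q^-\}$ with $q^\pm\in\hh^\pm$ (their real parts being opposite); moreover $q^\pm\notin\rr$, since $q^\pm\in\rr$ would force $p\in\pgamma$. Hence $\jj^\pm_p\vv=I_{q^\pm}\vv$ for all $\vv$. Writing the $\hh^+$-preimage as $q^+=z+wj$ with $z=z_1+iz_2\in\cc$, $z_1,z_2\in\rr$ and $z_1>0$, formula \eqref{preimage} gives $q^-=-z-i+e^{2i\theta}wj$ with $e^{2i\theta}=\frac{(1-4z_1^2)+4z_1 i}{1+4z_1^2}$, so that
\[ I_{q^+}=\frac{iz_2+wj}{\sqrt{z_2^2+|w|^2}},\qquad
I_{q^-}=\frac{-i(z_2+1)+e^{2i\theta}wj}{\sqrt{(z_2+1)^2+|w|^2}}. \]
The whole theorem then becomes a matter of reading off these two unit imaginary quaternions in terms of $z_1,z_2,w$.

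For part~(1) assume $p\notin L_i$; then $w\ne0$, because the $j$-component of $f(\zeta+\omega j)$ equals $\omega(2\Re\zeta-i)$, so $f$ maps $L_i$ and $\hh\setminus L_i$ to themselves. Matching the $j$-components, an identity $I_{q^+}=\pm I_{q^-}$ would force $e^{2i\theta}$ to be real of sign $\pm1$; but its imaginary part $\frac{4z_1}{1+4z_1^2}$ is positive, so this cannot happen, and $\jj^+,-\jj^+,\jj^-,-\jj^-$ are pairwise distinct. For parts~(2)--(3), take $p=x_0+ix_1\in L_i\setminus(\pgamma\cup\PGamma)$; then $w=0$, $I_{q^+}=i\,\mathrm{sign}(z_2)$, $I_{q^-}=-i\,\mathrm{sign}(z_2+1)$, and $z_2\notin\{0,-1\}$ (exactly the values with $f(z)\in\pgamma$). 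Substituting into $f(z)=z^2+zi$ gives $x_1=z_1(2z_2+1)$ and $x_0-x_1^2=-(4z_1^2+1)\,z_2(z_2+1)$. Since $z_1>0$: $x_0>x_1^2\iff-1<z_2<0$, forcing $I_{q^+}=I_{q^-}=-i$, which is part~(2); and $x_0<x_1^2\iff z_2(z_2+1)>0$, where $\mathrm{sign}(x_1)=\mathrm{sign}(2z_2+1)$, so either $z_2>0$ (hence $x_1>0$, $I_{q^+}=i$, $I_{q^-}=-i$) or $z_2<-1$ (hence $x_1<0$, $I_{q^+}=-i$, $I_{q^-}=i$). In both subcases $\jj^+=-\jj^-$, with common value the left multiplication by $i$ when $x_1>0$ and by $-i$ when $x_1<0$, which is part~(3).

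For the final assertion, recall that $f$ is a double cover branched over $\pGamma$: since every point of $\hh\setminus N_f$ maps outside $\pGamma$ (Theorem~\ref{2to1}), the fibre $f^{-1}(p_0)$ of any $p_0\in\pGamma$ is the single point $\hat q\in N_f$, which lies in the closure of both $\hh^+$ and $\hh^-$. Hence, as $p\to p_0$ inside $\hh\setminus\PGamma$, both preimages $q^+(p),q^-(p)$ tend to $\hat q$; equivalently, by Proposition~\ref{extends} the continuous inverses of $f|_{\hh^+\cup N_f}$ and of $f|_{\hh^-\cup N_f}$ both send $p_0$ to $\hat q$. Therefore $\jj^+_{p_0}\vv=I_{\hat q}\vv=\jj^-_{p_0}\vv$. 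I expect the only delicate part to be the bookkeeping in (2)--(3): checking that removing $\pgamma\cup\PGamma$ excludes exactly the loci where $I_{q^\pm}$ degenerates or the fibre collapses (and that $f$ is onto the relevant regions), and keeping the signs of $z_2$, $z_2+1$, $2z_2+1$ straight. The rest is direct substitution from \eqref{preimage}, Theorem~\ref{2to1} and Proposition~\ref{extends}, the last claim being essentially immediate once one notes that $\pGamma$ is the branch locus.
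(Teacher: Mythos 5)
Your proposal is correct and follows essentially the same route as the paper: it compares the two preimages given by \eqref{preimage} of Theorem~\ref{2to1}, treats the cases $p\in L_i$ (where $w=0$ and the signs of $z_2$, $z_2+1$, $2z_2+1$ decide parts 2--3) and $p\notin L_i$ (where $e^{2i\theta}\notin\rr$ rules out $\jj^+=\pm\jj^-$), and deduces the final claim from the fibre over $\pGamma$ being a single point of $N_f$ together with Proposition~\ref{extends}. The only difference is presentational: you make explicit some steps the paper leaves implicit, such as $f^{-1}(L_i)\subseteq L_i$ via the $j$-component $\omega(2\Re\zeta-i)$ and the identity $x_0-x_1^2=-(1+4z_1^2)\,z_2(z_2+1)$, which the paper states as direct region correspondences.
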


\begin{proof}
For every point $c\in 	\cc\setminus (\pgamma \cup \PGamma)$,
\[\textstyle
f^{-1}(c) = \left\{z,\ -(z+\frac12i)-\frac12i\right\},\]
for some $z\in\cc$ with $\Re(z)>0$. If $c \in \{x_0+ix_1: x_0 >
x_1^2\}$ then $z$ and $-(z+i/2)-i/2$ both belong to $\{x_0+ix_1:
-1<x_1<0\}$ so that $\jj^+_c$ and $\jj^-_c$ both coincide with the
left multiplication by $-i$. If $c \in \{x_0+ix_1: x_0 <
x_1^2,\ x_1>0\}$ then $z \in \{x_0+ix_1 : x_1>0\}$ and $-(z+i/2)-i/2$
belongs to $\{x_0+ix_1: x_1<-1\}$ so that $\jj^+_c$ coincides with
the left multiplication by $i$ and $\jj^-_c$ coincides with the left
multiplication by $-i$. Moreover, if $c \in \{x_0+ix_1: x_0 <
x_1^2,\ x_1<0\}$ then $z \in \{x_0+ix_1: x_1<-1\}$ and $-(z+i/2)-i/2
\in \{x_0+ix_1: x_1>0\}$ so that $\jj^+_c$ coincides with the left
multiplication by $-i$ and $\jj^-_c$ coincides with the left
multiplication by $i$.

Every point $c \in \hh \setminus (\pgamma \cup \PGamma \cup \cc)$ has
preimage \eqref{preimage} for some $z \in\cc$ with $\Re(z) >0$ and
non-zero $w\in \cc$; by direct inspection, $w\ne\pm
e^{2\theta i}w$ so that $\jj^+_c\ne\pm\jj^-_c$.

The final assertion follows from the fact that the preimage of each
$c\in \pGamma$ consists of one point.
\end{proof}

\section{A rational quartic scroll}\label{squartic}

We apply the theory of section~\ref{twist} to the example of
section~\ref{removing}, again with $f(q)=q^2+qi$.

From the point of view of complex structures, $f$ maps the plane
$L_i$ containing the parabola $\pgamma=\{t^2+ti:t\in\rr\}$
onto itself. It maps $-i/2$ to the focus $\frac14$, but is $2:1$
elsewhere in $L_i$. It follows that, pointwise for $q\in L_i$, the
induced complex structure $\jj^f_{f(q)}$ is determined by the action
of $f$ on $L_i$. This will be a key feature of the following
interpretation, in which the multi-valued structure $\jj^f$ is encoded
in the twistor lift of $f$. We shall see that the action of $f$ on
$L_i$ gives rise to double points in the ``graph'' of $\jj^f$.

The twistor lift is given by \eqref{F}, where
\begin{equation}\label{+gh}
g(v)=v^2+iv,\quad \hag(v)=v^2-iv,\quad h(v)=0.
\end{equation}
 This extends to a holomorphic mapping $F:\sQ\to\cp^3$ by allowing $v$
 to assume values in $\cc$ rather than just $\cc^+$. To be more
 precise, we shall adopt homogeneous coordinates for $\cp^3$ throughout
 this final section. Then
\begin{equation}\label{Fhom}
 F[st,su,tv,uv] = \big[s^2t,\ s^2u,\ t(v^2+isv),\ u(v^2-ivs)\big],
\end{equation}
with $([s,v],[t,u])\in\cp^1\times\cp^1$, is well defined and
consistent with \eqref{F}.

If we set $Z_0=s^2t$, $Z_1=s^2u$ etc., then
\[ Z_1Z_2-Z_0Z_3= 2is^3tuv,\qquad Z_1Z_2+Z_0Z_3=2s^2tuv^2.\]
It follows that the image of $F$ lies in the quartic surface $\sK$
defined by the equation $K=0$, where
\begin{equation}\label{K}
K(Z_0,Z_1,Z_2,Z_3) = (Z_1Z_2-Z_0Z_3)^2 +2Z_1Z_0(Z_1Z_2+Z_0Z_3).
\end{equation}
Observe that $\sK$ is a \emph{real} subvariety of the twistor space,
invariant by the antilinear involution \eqref{j}.

We already know from Theorem~\ref{2to1} that $f$ is generically
$2:1$. If $f(q_1)=f(q_2)$ then the two pairs of points in the quadric
$\sQ$ of Proposition~\ref{quadric} lying over $q_1,q_2$ must map to
the typically four points of $\sK$ lying over the common image
point. This shows that $F$ is generically $1:1$. In order to describe
the action of $F$ more explicitly, consider the intersection of $\sK$
with the quadric $Z_1Z_2-Z_0Z_3=0$, which we can in fact identify with
$\sQ$. This intersection also satisfies $Z_0Z_1Z_2=0$. With the
  notation
\begin{equation}\label{mij}
m_{ij}=\{[Z_0,Z_1,Z_2,Z_3]: Z_i=0=Z_j\},
\end{equation}
it must therefore consist of the four lines
$m_{01},m_{02},m_{13},m_{23}$ forming a ``square''.

\begin{thm}\label{bir}
The mapping $F$ is a birational equivalence between $\sQ$ and
$\sK$. In fact, $F$ maps $\sQ$ onto $\sK$ and is injective on the
complement of $\mm_{02}\cup\mm_{13}$ over which it is a $2:1$ branched
covering. The singular locus of $\sK$ is the union
$\mm_{02}\cup\mm_{13}\cup\mm_{01}$, and the set of cusp points is
$\mm_{01}\cup\left\{\left[0,1,0,\frac14\right],\left[1,0,\frac14,0\right]\right\}$.
\end{thm}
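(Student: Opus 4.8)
The plan is to work entirely in the homogeneous coordinates of \eqref{Fhom}, treating $F$ as a map from $\cp^1\times\cp^1$ (with coordinates $([s,v],[t,u])$) to $\cp^3$, and to analyse the fibres of $F$ together with the local structure of $\sK$ along the four coordinate lines $m_{ij}$. First I would establish that $F$ maps $\sQ$ onto $\sK$: one inclusion was already observed before the statement (the image satisfies $K=0$), and for surjectivity I would argue dimension-theoretically — $F$ is a nonconstant morphism from the irreducible projective surface $\sQ$, so its image is a closed irreducible surface contained in the irreducible surface $\sK$, hence equals $\sK$. (One should first check $\sK$ is irreducible; the polynomial $K$ in \eqref{K} has no obvious factorisation, and irreducibility can be confirmed by noting $\sK$ contains the image of the irreducible $\sQ$, which already forces $\sK$ to be irreducible as soon as $F$ is nonconstant.)

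Next I would compute the fibres of $F$ directly. Given a point $[Z_0,Z_1,Z_2,Z_3]\in\sK$, solving \eqref{Fhom} for $([s,v],[t,u])$ is a finite system: the first two coordinates give $[t,u]=[Z_0,Z_1]$ when $(Z_0,Z_1)\ne(0,0)$, and then $[s,v]$ is recovered (generically uniquely) from $Z_2,Z_3$ after using $K=0$; this should show injectivity away from where the recovery degenerates. The degeneracy occurs precisely when $Z_0=Z_1=0$ — i.e.\ on the line $m_{01}$ — or when the quadratic determining $v/s$ has a double root, which is where the $2:1$ branching over $\mm_{02}\cup\mm_{13}$ appears; here I would match this against Theorem~\ref{2to1}, which already tells us $f$ is generically $2:1$ and $F$ generically $1:1$, and identify the locus in $\sQ$ where two sheets of $\sQ$ over the same $f$-image collide (this is exactly where $q_1,q_2$ coalesce, i.e.\ over $N_f$), whose $F$-image should turn out to be $\mm_{02}\cup\mm_{13}$. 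Tracking which of the four $\sQ$-points over a coincidence pair $q_1,q_2$ get glued by $F$ gives the precise statement that $F$ is injective off $\mm_{02}\cup\mm_{13}$ and $2:1$ there. Birationality then follows since a surjective generically-injective morphism of irreducible projective surfaces is a birational equivalence.

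For the singular locus I would compute $\nabla K$ from \eqref{K} and solve $\nabla K=0$ on $\sK$. This is the routine but bulkiest step: the partials are
\[ \partial_{Z_0}K = -2Z_3(Z_1Z_2-Z_0Z_3)+2Z_1(Z_1Z_2+2Z_0Z_3),\quad \partial_{Z_3}K=-2Z_0(Z_1Z_2-Z_0Z_3)+2Z_0Z_1^2,\]
and similarly for $Z_1,Z_2$; setting all four to zero and intersecting with $K=0$ should cut out exactly $\mm_{01}\cup\mm_{02}\cup\mm_{13}$. To separate ordinary double points from cusps I would then examine the Hessian (or, more cleanly, the local normal form of $\sK$) along each branch of the singular locus: along $m_{02}$ and $m_{13}$ the surface should have transverse-$A_1$ (node-type) behaviour except at isolated points, while along $m_{01}$, and at the two special points $[0,1,0,\tfrac14]$ and $[1,0,\tfrac14,0]$, the quadratic part of $K$ in suitable local coordinates should degenerate to a perfect square, signalling a cusp edge / pinch point. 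The cleanest way to see this is probably via the parametrisation: the ramification of $F\colon\sQ\to\sK$ and the way the two coordinate rulings of $\sQ$ collapse onto $m_{01}$ exhibit $\sK$ as a scroll with $m_{01}$ as a cuspidal double line, and the two isolated cusp points are the images of the ramification points lying over the vertex of the parabola (the preimages of the focus $\tfrac14$). I expect this last step — rigorously distinguishing the node-type locus from the cuspidal locus along the three lines, and pinning down exactly the two extra cusp points — to be the main obstacle, and I would handle it by reducing to explicit local charts (e.g.\ setting $Z_0=1$ near $[1,0,\tfrac14,0]$, or $Z_1=1$ near the relevant line) and reading off the leading terms of $K$, cross-checking against the $F$-parametrisation.
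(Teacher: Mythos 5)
Your plan follows the paper's own route in all essentials: analyse $F$ directly in the coordinates \eqref{Fhom}, compute fibres to get surjectivity, injectivity away from $\mm_{02}\cup\mm_{13}$ and the $2:1$ behaviour over those lines, then find the singular locus from $\nabla K=0$ intersected with $K=0$, and separate cusps from nodes via the Hessian (the paper uses its $2\times2$ minors, which vanish identically along $\mm_{01}$ and vanish on $\mm_{02}$, $\mm_{13}$ only at $\left[0,1,0,\frac14\right]$, $\left[1,0,\frac14,0\right]$). Two of your supporting claims, however, are genuinely wrong and should be repaired. First, irreducibility of $\sK$ does not follow from the fact that it contains the irreducible image of $\sQ$ (a quartic could have further components), so your dimension-theoretic surjectivity argument is circular as stated. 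It is also unnecessary: for $Z_0Z_1\neq0$ the preimage is recovered \emph{uniquely} by $u/t=Z_1/Z_0$ and $2iv/s=(Z_1Z_2-Z_0Z_3)/(Z_0Z_1)$ (the relation $K=0$ guarantees consistency with both remaining equations; there is no quadratic in $v/s$ and no double-root issue in this chart), while on $\sK$ the condition $Z_0=0$ forces $Z_1Z_2=0$ and $Z_1=0$ forces $Z_0Z_3=0$, so the leftover points lie on $\mm_{01}\cup\mm_{02}\cup\mm_{13}$, which are the images of the rulings $s=0$, $t=0$, $u=0$; the last two are covered $2:1$ via $[s,v]\mapsto[s^2,\,v(v\mp is)]$, and this is the true source of the $2:1$ branched covering statement.

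Second, your cross-check against Theorem~\ref{2to1} misidentifies the gluing locus. The lifts lying over $N_f$ are \emph{not} identified by $F$: their images lie over the paraboloid $\pGamma=f(N_f)$ and are smooth points of $\sK$ at which the twistor fibre is tangent to $\sK$ (statement 3 of Theorem~\ref{disc}); $F$ is injective there, since the two lifts of a point $q\notin L_i$ have distinct $[Z_0,Z_1]$. The identifications occur instead over $L_i$: if $q_1\neq q_2$ lie in $L_i$ with $f(q_1)=f(q_2)$, then the two lifts of the same ruling type (e.g. $[1,0,q_1,0]$ and $[1,0,q_2,0]$) have equal image, and this is exactly the $2:1$ behaviour over $\mm_{13}\cup\mm_{02}$, i.e. over $L_i\cup\{\infty\}$ --- the ``action of $f$ on $L_i$'' producing the double curve, as explained at the start of section~\ref{squartic}. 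Your primary computation (solving \eqref{Fhom} directly) will land on the correct locus, so simply drop the $N_f$ heuristic. Minor slips to fix as well: $\partial K/\partial Z_3=-2Z_0(Z_1Z_2-Z_0Z_3)+2Z_0^2Z_1$ (not $2Z_0Z_1^2$), and the two isolated cusps lie over the \emph{focus} $\frac14$ of $\pgamma$, not its vertex.
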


\begin{proof}
To prove that $F$ is onto, first suppose that $Z_0\ne0$ and
$Z_1\ne0$. Then we can define \[ u/t=Z_1/Z_0,\qquad
2iv/s=(Z_1Z_2-Z_0Z_3)/(Z_0Z_1),\] and $[Z_i]$ has a unique inverse
image.

If $Z_0=0$ then $Z_1Z_2=0$, whereas if $Z_1=0$ then $Z_0Z_3=0$. These
equations define the lines $m_{02},m_{13},m_{01}$, so we need to
  consider the three possibilities in turn:
\begin{itemize}
\item If $Z_0=Z_2=0$ then $t=0$ and $F$ maps $[s,v]\in\cp^1$ to $[0,
  s^2, 0, v^2-isv]\in\mm_{02}$.
\item If $Z_1=Z_3=0$ then $u=0$ and $F$
maps $[s,v]\in\cp^1$ to $[s^2,0,v^2+isv,0]\in\mm_{13}$.
\item If $Z_0=0=Z_1$ then $s=0$ and $F$ maps $[t,u]\in\cp^1$ to
  $[0,0,t,u]$, so is the identity on $\mm_{01}$.
\end{itemize}
It follows that $F$ is injective unless $[t,u]$ equals $[1,0]$ or
$[0,1]$ \emph{and} $[s,v]$ does not equal $[1,0]$ or $[0,1]$.
Moreover, restricted to $\mm_{02}\cup\mm_{13}$, the map $F$ is $2:1$
except over the vertices
\begin{equation}\label{pinch}
 [1,0,0,0],\quad[0,1,0,0],\quad[0,0,1,0],\quad[0,0,0,1]
\end{equation}
of the square, where it acts as the identity.

\smallbreak

Any singular point $[Z_i]$ of $\sK$ must satisfy
\[\begin{array}{l}
\ds 0= \frs{\partial K}{\partial Z_0}=-2Z_3(Z_1Z_2-Z_0Z_3)+ 2Z_1(Z_1Z_2+2Z_0Z_3)\\[6pt]
\ds 0= \frs{\partial K}{\partial Z_1}=2Z_2(Z_1Z_2-Z_0Z_3)+2Z_0(2Z_1Z_2+Z_0Z_3)\\[6pt]
\ds 0= \frs{\partial K}{\partial Z_2}=2Z_1(Z_1Z_2-Z_0Z_3)+2Z_0Z_1^2\\[6pt]
\ds 0= \frs{\partial K}{\partial Z_3}=-2Z_0(Z_1Z_2-Z_0Z_3)+2Z_0^2Z_1.
\end{array}\]
The last two equations imply that $Z_0=0$ or $Z_1=0$. Since $[Z_i]$
satisfies \eqref{K}, we know that it must lie in
$\mm_{02}\cup\mm_{13}\cup\mm_{01}$. Conversely, any point in the union
of the three lines satisfies all four equations above, and is
singular.

To find the cusp points, one can compute the $2\times2$ minors
(determinants) of the Hessian
$\Big(\!\hbox{\large$\frac{\partial^2K}{\partial Z_i\partial
    Z_j}$}\!\Big)$ at the singular points. When $Z_0=Z_1=0$, all
minors are identically zero, and every point of $\mm_{01}$ is a
cusp. Over $\mm_{02}$ the relevant determinant equals
$4Z_1^3(4Z_3-Z_1)$, and over $m_{13}$ it is $4Z_0^3(4Z_2-Z_0)$,
indicating additional cusps at $[0,1,0,\frac14]$ and
$[1,0,\frac14,0]$.\end{proof}

Recall from \eqref{fibinf} that $m_{01}=\pi^{-1}(\infty)$. To make the
twistor projection $\pi$ more explicit, set
\begin{equation}\label{q}
 \tq=x_0+ix_1+(x_2+ix_3)j=w_1+w_2j,\qquad w_1,w_2\in\cc,
\end{equation}
so that $[Z_i]\in\pi^{-1}(\tq)$ if and only if
$Z_2+Z_3j=(Z_0+Z_1j)(w_1+w_2j)$. (In practice, we will want to
  consider points $\tq=f(q)$, so the tilde is to avoid confusion).
This implies that $\pi^{-1}(\tq)$ has equations
\begin{equation}\label{Zw}
 Z_2 = w_1Z_0-\ol w_2Z_1,\qquad
 Z_3 = w_2Z_0+\ol w_1Z_1,
\end{equation}
and coordinates
\begin{equation*}
[\zeta_1,\zeta_2,\zeta_3,\zeta_4,\zeta_5,\zeta_6] 
= \left[|w_1|^2+|w_2|^2,w_2,-w_1,\bar w_1, \bar w_2,1\right]
\end{equation*}
in $\cp^5$ (cf.~\eqref{FL} and \eqref{-gh}), so that it is fixed by
the action of $\sigma$.  We obtain
\begin{itemize}
\item $Z_2=0=Z_3 \Ra w_1=0=w_2 \Ra\tq=0$,
\item $Z_0=0=Z_2$ or $Z_1=0=Z_3$\Ra $w_2=0\Ra\tq\in L_i$,
\item $Z_0=0=Z_3$ or $Z_1=0=Z_2$\Ra $w_1=0\Ra\tq\in L_i^\perp$.
\end{itemize}

\noindent Therefore $m_{23}=\pi^{-1}(0)$ and $m_{02},m_{13}$ both
project to $L_i\cup\infty$. It follows from Theorem~\ref{bir} that
topologically, {\it $\sK$ is obtained from $\sQ$ by carrying out a
  $\zz_2$ identification on each of two punctured planes lying over
  $L_i\setminus\{0\}$.} The main features of $\sK$ are represented by
Figure~2.

\begin{center}
\vspace{-10pt}
\scalebox{.65}{\includegraphics{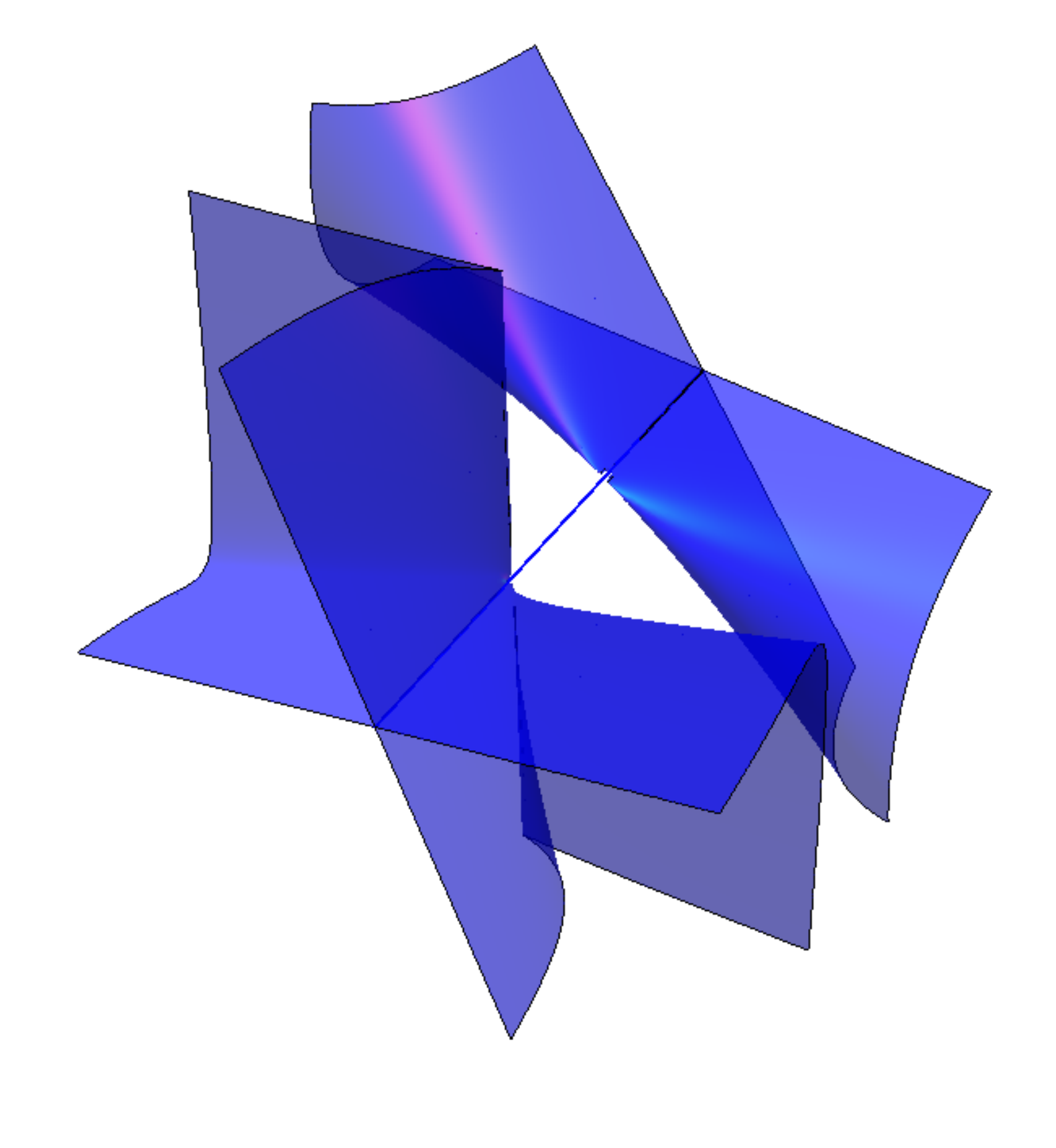}}
\vspace{-20pt}
\end{center}

\parshape 1 18pt 404pt \noindent {\it Figure 2.} The real affine slice
of $\sK$ defined by $K(x,y,z,\frac14)=0$. The vertical line is
$m_{01}$, and the diagonal one containing an extra isolated cusp
corresponds to $m_{02}$.

\vspace{20pt}

From \eqref{-gh} and \eqref{+gh}, the twistor transform of $f$
is the mapping $\tF:\cp^1\to\cp^5$ given by
\begin{equation}\label{tt}
\tF(v)=[v^4+v^2,\ 0,\ -v^2-iv,\ v^2-iv,\ 0,\  1].
\end{equation}
Its image lies in a 2-quadric, the intersection of the Klein quadric
with two of its tangent hyperplanes, those at the points
$[0,1,0,0,0,0]$ and $[0,0,0,0,1,0]$ of $\cp^5$ that correspond to
$\mm_{02}$ and $\mm_{13}$ respectively. It has a cusp at
$[1,0,0,0,0,0]$ that corresponds to $\mm_{01}$. By construction, $\sK$
is the ruled surface or \emph{scroll} containing, for each $v\in\cc$,
the line $F(\ll_v)=\tF(v)$, intersection of the planes
\begin{equation}\label{planes}
Z_2 = (v^2+iv)Z_0\quad\hbox{and}\quad Z_3 =(v^2-iv)Z_1.
\end{equation} This line is a twistor fibre if and only if it satisfies
the reality condition $\sigma(\tF(v))=\tF(v)$, equivalently, $v=\bar
v=x\in\rr$. Indeed,
\[ F(\ll_x) =
\{[Z_0,\,Z_1,\,(x^2+ix)Z_0,\,(x^2-ix)Z_1]:[Z_0,Z_1]\in\cp^1\} =
\pi^{-1}(x^2+ix),\] for each $x\in\rr$. Of the six lines
  \eqref{mij}, only $\mm_{01}=\ell_\infty$ and $m_{23}=\ell_0$ belong
  to the ruling parametrized by the twistor transform $\tF$.

\begin{rem} It follows from the above descriptions that the
  rational ruled quartic surface $\sK$ is of type 1(iii) in
  Dolgachev's list \cite[Theorem 10.4.15]{dolg} and, as explained in
  that book, of type IV(B) in Edge's list \cite{edge}, these
  classifications building on work of Cayley and Cremona. In classical
  terminology, the singular locus of $\sK$ is the \emph{double curve},
  and the singular points \eqref{pinch} are the \emph{pinch points} of
  $\sK$ \cite{dolg}. The singular locus is effectively determined by
  the properties of $\tF$ mentioned immediately after \eqref{tt}, and
  the equation \eqref{K} is a variant of Rohn's normal form, see
  \cite[Lemma~1.10 and \S3.2.8]{polo}.
\end{rem}

Substituting $Z_2=w_1Z_0$ and $Z_3=\ol w_1Z_1$ in \eqref{K}, we see
that points in $\pi^{-1}(L_i)\cap\sK$ are characterised by the
equation
\[Z_0^2Z_1^2(x_1^2-x_0)=0,\] confirming that each point of $L_i$ is
covered by exactly two double points of $\sK$. 
These affine singular points arise naturally as intersection points of
lines in the ruling. Indeed, $F(\ll_v),F(\ll_w)$ meet where
\[\begin{array}{l}
Z_2 = (v^2+iv)Z_0 = (w^2+iw)Z_0,\\[3pt]
Z_3 = (v^2-iv)Z_1 =(w^2-iw)Z_1,
\end{array}\]
 yielding $Z_0=0=Z_2$ and $v+w=i$, or $Z_1=0=Z_3$ and $v+w=-i$. 

The lines $F(\ell_v)$, $F(\ell_{\pm i-v})$ are distinct unless $v=\pm
i/2$. In the former case, they meet $m_{02}$ or $m_{13}$ (depending on
the sign) transversely at a node of $\sK$. Moreover, if they meet over
$\pgamma$ one of the two lines is a twistor fibre. On the other hand,
each of the two double lines $\ell_{i/2},\ell_{-i/2}$ is tangent to
$\sK$ at a cusp lying over the focus of $\pgamma$, i.e.,\ at a point of
$\sK\cap\pi^{-1}(\frac14)$. One of these two pinch points is
  shown in Figure~2, and we can establish a direct link with Figure~1
  as follows. If we use \eqref{Fhom}, we obtain
  \begin{equation*}\textstyle
 F\big[1,u,\frac i2,\frac i2u\big] = \big[1,u,-\frac34,\frac14u\big],
\end{equation*}
 for $u\in \mathbb{C}$. We therefore see that 
 \[\textstyle
F(\ell_{i/2}) = \Big\{\big[1,u,-\frac34,\frac14u\big] :
u\in\cc \cup \{\infty\}\Big\}\] and that the projection
$\pi(F(\ell_{i/2}))=\pi(F(\ell_{-i/2}))$ is parametrized
stereographically in $\hh=\rr^4$ by
\begin{equation}\label{osc}
\frac1{4(1+|u|^2)}\Big(|u|^2-3+4uj\Big) = 
\frac1{4(1+|u|^2)}\Big(|u|^2-3,\ 0,\ 4\,\Re\, u,\ 4\,\Im\, u\Big). 
\end{equation}
(here $\Im$ denotes the complex imaginary part, $\Im : \cc \to \rr$).
This is the sphere $$\textstyle
f\left(\frac12\s\right)= -\frac 14 + \frac i2 \s
\ =\ \left\{x_0+x_2j+x_3k:
(x_0+\frac14)^2+x_2^2+x_3^2=\frac14\right\}$$ visible in Figure~1,
containing the focus of $\gamma$.

\medbreak

The next result asserts that the surface $\sK$ is in effect determined
by the parabola $\pgamma = f(\rr)$.

\begin{prop}\label{Nullst}
Any algebraic surface in $\cp^3$ containing $\pi^{-1}(\pgamma)$
contains $\sK.$
\end{prop}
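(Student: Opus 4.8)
The plan is to exploit the fact that $\pi^{-1}(\pgamma)$ is a rational curve of degree $4$ in $\cp^3$ whose Zariski closure already spans $\cp^3$ projectively, but more sharply, to compute the ideal of $\pi^{-1}(\pgamma)$ and show that $K$ is the lowest-degree (indeed essentially unique up to lower degree) quartic vanishing on it. Concretely, I would parametrize $\pi^{-1}(\pgamma)$: by the last displayed computations in the section, for $x\in\rr$ the fibre $\pi^{-1}(x^2+ix)=F(\ll_x)$ is the line $\{[Z_0,Z_1,(x^2+ix)Z_0,(x^2-ix)Z_1]:[Z_0,Z_1]\in\cp^1\}$, so $\pi^{-1}(\pgamma)=\bigcup_{x\in\rr}F(\ll_x)$ is a real ruled surface (half of $\sK$, in a sense). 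Since $\sK=\overline{\bigcup_{v\in\cc}F(\ll_v)}$ is the complexification, and $\pi^{-1}(\pgamma)$ is a \emph{real} $2$-real-dimensional subset that is Zariski-dense in $\sK$ (the parametrizing variable $x$ ranges over $\rr$, whose Zariski closure in $\cc$ is all of $\cc$), any algebraic surface vanishing on $\pi^{-1}(\pgamma)$ vanishes on $\sK$.

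**First I would** make the density argument precise. Let $\sS$ be any algebraic surface in $\cp^3$ with $\pi^{-1}(\pgamma)\subseteq\sS$, defined by a homogeneous polynomial $P$. For each fixed $x\in\rr$, $P$ vanishes on the whole line $F(\ll_x)$, i.e.\ the restriction of $P$ to that line is the zero polynomial in $[Z_0,Z_1]$. Substituting $Z_2=(x^2+ix)Z_0$, $Z_3=(x^2-ix)Z_1$ into $P$ gives a polynomial $p(x,Z_0,Z_1)$ which is identically zero for all real $x$; since it is polynomial in $x$, it vanishes identically as a polynomial in $x$, hence for all complex values of $x$ as well. Therefore $P$ vanishes on every line $F(\ll_v)$, $v\in\cc$, and since these lines sweep out a Zariski-dense subset of $\sK$ (their union is constructible and $2$-dimensional, with closure $\sK$ by the definition of $\sK$ as the scroll), $P$ vanishes on $\sK$. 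This shows $\sK\subseteq\sS$ as sets; and since $\sK$ is irreducible (it is the image of the irreducible $\sQ\cong\cp^1\times\cp^1$ under the morphism $F$, by Theorem~\ref{bir}), if one wants the scheme-theoretic statement one notes $K$ is irreducible as a polynomial, so $(K)$ is prime and $K\mid P$.

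**The main obstacle** I anticipate is justifying irreducibility of $K$ (equivalently, that $\sK$ is genuinely an irreducible quartic and not, say, a union of lower-degree components or a non-reduced structure), so that ``$\sS\supseteq\sK$ as a set'' upgrades to ``$K$ divides $P$'' and hence ``$\sS$ contains $\sK$'' in the intended sense. This follows from Theorem~\ref{bir}: $F:\sQ\to\sK$ is birational onto its image with $\sQ$ irreducible of dimension $2$, so $\sK$ is an irreducible surface; being the zero locus of the degree-$4$ polynomial $K$, and having degree exactly $4$ (one line $F(\ll_v)$ meets a generic plane in one point, and a generic line meets $\sK$ in... — more simply, $\sK$ is cut out set-theoretically by the irreducible $K$), we conclude $K$ is irreducible and $(K)$ is its (radical) ideal in the appropriate sense. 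A secondary, purely bookkeeping point is checking that the union $\bigcup_{v\in\cc}F(\ll_v)$ really is Zariski-dense in $\sK$ rather than contained in a proper subvariety; but this is immediate because $\sK$ was \emph{defined} in the text (just before Theorem~\ref{bir}) as the scroll containing exactly these lines, and its image under $F$ is all of $\sK$. With irreducibility in hand the proof is essentially the one-line polynomial-vanishing argument above.
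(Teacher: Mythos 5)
Your core argument is the paper's own: extend the vanishing of $P$ on the real fibres $F(\ll_x)=\pi^{-1}(x^2+ix)$ to all the rulings $F(\ll_v)$, $v\in\cc$, by polynomiality in the parameter, then use the surjectivity of $F\colon\sQ\to\sK$ from Theorem~\ref{bir} to conclude that $P$ vanishes on all of $\sK$; this already gives the stated containment, exactly as in the paper. The only divergence is in the optional strengthening to ``$K$ divides $P$'' (which the paper does prove as well): the paper obtains irreducibility of $K$ from the observation that no plane or quadric in $\cp^3$ can contain $\pi^{-1}(\pgamma)$, whereas your justification is partly circular --- the fallback ``$\sK$ is cut out set-theoretically by the irreducible $K$'' assumes the conclusion, and irreducibility of the image $F(\sQ)$ by itself does not yet exclude, say, $K$ being proportional to the square of a quadratic form; to finish along your lines you would need to establish that the image has degree $4$ (or otherwise rule out repeated factors), which your parenthetical starts but does not complete. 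One small slip in your opening paragraph: $\pi^{-1}(\pgamma)$ is not a rational quartic curve but a $3$-real-dimensional union of twistor lines over the parabola; your actual argument treats it correctly as such, so nothing downstream is affected.
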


\begin{proof} 
 Let $p=p(Z_0,Z_1,Z_2,Z_3)$ be a polynomial that vanishes on
$F(\ll_x)$ for all $x\in\rr$.  We shall actually show that $K$ divides
$p$.

Referring to \eqref{Fhom}, we have that $P=p\circ F$ vanishes on all
$\ll_x$, so
  \[ P(Z_0,Z_1,xZ_0,xZ_1)=0,\qquad \forall\, [Z_0,Z_1]\in\cp^1,\ \
  x\in\rr.\] Since $P$ is a polynomial, we can replace $x\in\rr$ by
  $x\in\cc$, and so $P$ vanishes on the quadric $\sQ$. It follows from
  Theorem~\ref{bir} that $p$ vanishes on $\sK$.

The polynomial $K$ must be irreducible because no line or quadric in
$\cp^3$ could contain $\pi^{-1}(\pgamma)$. It follows from the
Nullstellensatz that $K$ divides $p$.
\end{proof}

In view of the proposition, properties of $\sK$ are naturally
associated to the 4-dimensional conformal geometry of the
parabola. We are now in a position to consolidate the analysis
  that we are carrying out in parallel to section~\ref{removing}.

\begin{thm} \label{disc}
Let $\tq=f(q)\in\hh$. The cardinality of the fibre
$\pi^{-1}(\tq)\cap\sK$ is different from $4$ in the following cases:
\begin{enumerate}
\item $\tq\in \pgamma$ iff $\pi^{-1}(\tq)\subset\sK$;
\item $\tq\in L_i\setminus\pgamma$ iff $\pi^{-1}(\tq)$ contains
  exactly two singular points of $\sK$;
\item $\tq\in\pGamma\setminus\left\{\frac14\right\}$ iff
  $\pi^{-1}(\tq)$ is tangent to $\sK$ at two smooth points.
\end{enumerate}
\end{thm}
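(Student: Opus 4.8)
The plan is to analyze the intersection $\pi^{-1}(\tilde q)\cap\sK$ directly using the explicit equations obtained above, since both $\pi^{-1}(\tilde q)$ (see \eqref{Zw}) and $\sK$ (see \eqref{K}) have been written out. Writing $\tilde q = f(q) = w_1+w_2 j$ as in \eqref{q}, I would substitute the defining relations $Z_2=w_1Z_0-\bar w_2Z_1$, $Z_3=w_2Z_0+\bar w_1Z_1$ of the twistor fibre into $K(Z_0,Z_1,Z_2,Z_3)=0$. This yields a binary quartic form in $[Z_0,Z_1]$ whose roots (counted with multiplicity) are exactly the points of $\pi^{-1}(\tilde q)\cap\sK$; the fibre meets $\sK$ in fewer than $4$ points precisely when this quartic has a repeated root, i.e.\ when its discriminant vanishes. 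So the core computation is: expand the quartic, compute (or factor) its discriminant, and identify the vanishing locus in terms of $w_1,w_2$, hence in terms of $\tilde q$.

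For the three cases, here is how I expect the analysis to split. Since $f(q)=q^2+qi$ maps $\rr$ onto $\pgamma$ and maps $L_i$ onto $L_i$ two-to-one with branch point at the focus $\tfrac14$, and maps $N_f$ onto $\pGamma$, the parametrization of $\tilde q$ by $q$ is what ties the discriminant conditions to the stated geometric sets. For case (1), if $\tilde q\in\pgamma$ then $\pi^{-1}(\tilde q)=F(\ell_x)$ for some $x\in\rr$ by the computation just before Theorem~\ref{bir}, so the whole fibre lies in $\sK$ (infinitely many intersection points, vacuously ``different from $4$''); conversely if $\pi^{-1}(\tilde q)\subset\sK$ then the quartic form vanishes identically, and I would read off from its coefficients that $w_2=0$ and $w_1\in\pgamma$. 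For case (2), the substitution $w_2=0$ was already carried out and gives $Z_0^2Z_1^2(x_1^2-x_0)=0$, so for $\tilde q\in L_i\setminus\pgamma$ (so $x_1^2\ne x_0$, ruling out the fibre lying in $\sK$) the quartic is $Z_0^2Z_1^2$ up to a nonzero scalar: exactly the two double roots $[1,0]$ and $[0,1]$, which lie on $m_{13}\cup m_{02}\subset\mathrm{Sing}(\sK)$ by Theorem~\ref{bir}. For case (3), I would substitute $w_1=0$ (the slice $\tilde q\in L_i^\perp=\rr+j\rr+k\rr$ containing $\pGamma$; more precisely one works on the $3$-space $\rr+j\rr+k\rr$ and uses the paraboloid equation $x_0=\tfrac14-|w_2|^2$ from \eqref{oid}) and show the resulting quartic has two double roots, both at \emph{smooth} points of $\sK$ — using that $\pGamma=f(N_f)$ and that over $N_f$ the map $f$ is a branch locus, which is the twistorial shadow of the tangency. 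The exceptional point $\tfrac14$ is excluded because there the two pinch points $[0,1,0,\tfrac14]$, $[1,0,\tfrac14,0]$ enter, raising the multiplicity structure.

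The main obstacle will be case (3): verifying that the two double intersection points over a generic point of $\pGamma\setminus\{\tfrac14\}$ are \emph{smooth} points of $\sK$ (so genuine tangencies of the line to the surface, not passages through the double curve), and conversely that this tangency configuration forces $\tilde q\in\pGamma$. The singular locus is $m_{02}\cup m_{13}\cup m_{01}$, which projects under $\pi$ to $L_i\cup L_i^\perp\cup\{\infty\}$; since $\pGamma\subset\rr+j\rr+k\rr$ meets $L_i$ only at the focus $\tfrac14$ and meets $L_i^\perp$ only at the origin, for $\tilde q\in\pGamma$ away from $\tfrac14$ one must separately confirm the intersection points avoid the singular lines. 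I would do this by a local computation at those points: evaluate the gradient of $K$ there and check it is nonzero, then check the line direction is tangent to the hypersurface (annihilated by the gradient), which is the discriminant condition again. The cleanest route is probably to parametrize $\sK$ by $F$ from \eqref{Fhom}, so a point of $\pi^{-1}(\tilde q)\cap\sK$ corresponds to a value $v\in\cc$ (or a point of $\ell_x$) with $\pi(F(\ell_v))\ni\tilde q$; the equation $v+w=\pm i$ describing when $F(\ell_v)$ and $F(\ell_w)$ meet (derived just after Figure~2) then converts the ``tangency'' into the coincidence $v=w$, i.e.\ $v=\pm i/2$ shifted appropriately, and tracks exactly when this happens over a point of $\pGamma$. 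Bookkeeping the correspondence between $q$, $v$, and $\tilde q=f(q)$ carefully — keeping the two sheets of $f$ straight — is where the real care is needed; the algebra itself is elementary.
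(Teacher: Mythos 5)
Your overall strategy—restrict $K$ to the fibre via \eqref{Zw}, obtain a binary quartic in $[Z_0,Z_1]$, and detect degenerate fibres by a discriminant—is sound and is essentially the paper's computation in different coordinates (the paper parametrizes the intersection points by the ruling variable $v$ and obtains the real quartic $R(v)=v^4+(1-2x_0)v^2-2x_1v+|\tq|^2$). Cases 1 and 2 then come out exactly as you say, since the $w_2=0$ substitution gives $Z_0^2Z_1^2(x_1^2-x_0)$ up to a constant. The genuine gap is the converse half of statement 3: you must prove that for $\tq\notin L_i$ the fibre meets $\sK$ in four \emph{distinct} points unless $\tq\in\pGamma$, i.e.\ that the real zero locus of the discriminant, away from $L_i$, is exactly the paraboloid. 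You treat this as routine (``identify the vanishing locus \dots the algebra itself is elementary''), but it is the hard step: on the slice $x_1=0$ the discriminant factors as $C(-1+4C+4x_0-4x_0^2)^2$ with $C=|\tq|^2$, which vanishes precisely on $\pGamma$, but off $\{x_1=0\}$ no such factorization is available, and the paper has to run a global real-algebraic argument (the discriminant $D$ is of degree $6$ in each $x_i$ with positive leading behaviour, so it is positive on the boundary of a large cube, and a computation shows its only real critical points lie in $\{x_1=0\}$). Your proposed shortcut—using ``$v+w=\pm i$'' to convert tangency into ``$v=w$, i.e.\ $v=\pm i/2$ shifted appropriately''—misidentifies the mechanism: the relation $v+w=\pm i$ (with $Z_0=0$ or $Z_1=0$) locates where two \emph{distinct} ruling lines cross, i.e.\ the double curve of $\sK$, which lies over $L_i$; the tangencies over $\pGamma$ occur at points with $Z_0Z_1\ne0$ and correspond to double roots $v=\pm i\sqrt{\tfrac14+x_2^2+x_3^2}$, which vary along $\pGamma$ and equal $\pm i/2$ only over the excluded vertex $\tfrac14$. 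So that route cannot deliver the ``only if'' direction, and without the global discriminant analysis the proof is incomplete.

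Two smaller corrections. First, the singular locus $\mm_{02}\cup\mm_{13}\cup\mm_{01}$ projects onto $L_i\cup\{\infty\}$, not onto $L_i\cup L_i^\perp\cup\{\infty\}$ (the lines over $L_i^\perp$ are $\mm_{03}$ and $\mm_{12}$, which are not singular); this actually makes the smoothness issue in case 3 easier than you anticipate: since $\pGamma\setminus\{\tfrac14\}$ is disjoint from $L_i$, the corresponding fibres avoid the singular locus altogether, which is exactly how the paper concludes that coincident intersection points there are tangencies at smooth points. Second, $\pGamma$ lies in the $3$-space $\{x_1=0\}$, not in $L_i^\perp$ (setting $w_1=0$ captures only the circle $\pGamma\cap\{x_0=0\}$, and the origin is not on $\pGamma$); your corrected substitution $x_1=0$, $x_0=\tfrac14-|w_2|^2$ is the right one and does give the forward implication of statement 3, producing two conjugate double roots at smooth points.
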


\begin{proof}
We tackle this by investigating the intersection of a given fibre
$\pi^{-1}(\tq)$ with the lines $F(\ll_v)$ as $v$ varies. Referring to
\eqref{Zw} and \eqref{planes}, we need to solve the equations \[
(v^2+iv)Z_0 = w_1Z_0-\ol w_2Z_1,\qquad
(v^2-iv)Z_1 = w_2Z_0+\ol w_1Z_1\]
which yield
\[ \Big((v^2-iv-\ol w_1)(v^2+iv-w_1)+|w_2|^2\Big)Z_0Z_1=0.\]
If $Z_0=0$ or $Z_1=0$ (two antipodal points on the fibre) then
$w_2=0$ and
\begin{equation}\label{or}
\ol w_1=v^2-iv\quad\hbox{ or }\quad w_1=v^2+iv.
\end{equation}
For fixed $\tq=w_1\in L_i$, there is a total of four solutions
$\ell_v$ unless $\tq=\frac14$ is the focus of $\pgamma$. We already
know that these solutions are associated to two distinct singular
points of $\sK$.

The line $F(\ell_v)$ can only equal a fibre $\pi^{-1}(\tq)$ if hits both
points $Z_0=0$ and $Z_1=0$ of that fibre. This forces the ``or'' in
\eqref{or} to be an ``and'', whence $v=x\in\rr$ and $\tq\in\pgamma$.
This is statement 1, and statement 2 also follows.

If $Z_0Z_1\ne0$ then it is the vanishing of
\[ R(v)=v^4+(1-2x_0)v^2-2x_1v + C,\]
where $C=|\tq|^2=\sum_{i=0}^3 x_i^2$, that determines the points of
$\pi^{-1}(\tq)\cap\sK$. Since the coefficients are real, the roots of
$R(v)$ must occur in conjugate pairs.

A computation of the discriminant of $R(v)$ shows that it equals
$16$ times
\[\begin{array}{l}
D\ =\ C-8C^2+16C^3-8Cx_0+32C^2x_0+24Cx_0^2-32C^2x_0^2-32Cx_0^3\\[5pt]
\hskip30pt
+16Cx_0^4-x_1^2+36Cx_1^2+6x_0x_1^2-72Cx_0x_1^2-12x_0^2x_1^2+8x_0^3x_1^2-27x_1^4
\end{array}\]
When $x_1=0$, this expression reduces to $C(-1 + 4C + 4x_0 - 4x_0^2)^2$,
so the zero set of $D$ contains
\begin{equation}\label{critical}
  x_0 = \frs14 - x_2^2-x_3^2,\qquad x_1=0,
\end{equation}
which is the equation of $\Gamma$. We need to show that there are no
other zeros.

When we substitute $x_i=\pm t$, the factor $D$ becomes a polynomial of
degree 6 in $t$ with leading term $9216t^6$ irrespective of the choice
of signs. Hence it is positive on the boundary of a sufficiently large
cube, and must attain a minimum inside the cube. It suffices to show
that the only critical points of $D$ occur at \eqref{critical}. 
It turns out that $\partial D/\partial x_i$ is divisible by $x_i$ for
$i=1,2,3$, so we set
\[\delta_i=\left\{\begin{array}{ll}
\ds\frac{\partial D}{\partial x_0}\quad & i=0,\\[20pt]
\ds\frac1{x_i}\frac{\partial D}{\partial x_i} & i=1,2,3.
\end{array}\right.\]
A computation shows that the system $\delta_i=0$ for $i=0,1,2,3$ only
has real solutions with $x_1=0$, and there are no further solutions
with $x_2=0$ or $x_3=0$.

We already know that the quartic has no singular points over
  $\hh\setminus L_i$. If two of the four points coincide at a
  smooth point of $\sK$ then $\sK$ is tangent to the fibre at this
  point. Statement 3 follows.
\end{proof}

Theorem~\ref{disc} provides essentially the same information as
Theorem~\ref{JJJJ}. In a neighbourhood $U$ of a generic point of
$\hh,$ the quartic $\sK$ is a $4:1$ covering of $U$ whose leaves are
holomorphic sections of the twistor space $\cp^3$. In particular,
$\sK$ contains the graphs of the complex structures
$\pm\jj^+,\pm\jj^-$ defined by Proposition~\ref{extends} on the
complement $\hh\setminus(\pgamma\cup\PGamma)$ of the parabola
$\pgamma$ and the solid paraboloid $\PGamma$. These two pairs coincide
over $L_i$, but the two leaves cross transversely (so both $\jj^+$ and
$\jj^-$ are well defined) over $L_i\setminus \left(\gamma \cup
  \left\{\frac14 \right\}\right)$. On the other hand, along directions
approaching $\pGamma$ the graphs become vertical, so the covariant
derivatives $\nabla\jj^\pm$ (computed in any conformally flat metric)
are unbounded.

 The non-trivial computations required in the proof of statement 3
 highlight the difficulty in computing the discriminant locus of some
 quite simple algebraic surfaces relative to the twistor projection
 $\pi$. In this paper, the quartic $\sK$ was ``reverse engineered''
 via Proposition~\ref{Nullst}. The problem of understanding the
 discriminant locus of generic quartics (or even cubics) remains
 open. The last proof also serves to demonstrate the relative
 effectiveness of the approach in section~6, which achieved the same
 result using quaternions.\smallbreak

Having identified the surface $\sK$, we can proceed with the proof of
the next result by mimicking that of \cite[Theorem~3.10]{viaclovsky}.

\bigbreak

\begin{thm}\label{nonex}\mbox{}
\vspace{-5pt}
\begin{enumerate}
\item There is no orthogonal complex structure whose maximal domain of
  definition is $\hh\setminus\pgamma$.
\item Let $J$ be an OCS defined on
  $\hh\setminus(\pgamma\cup\PGamma)$. If, for each $p\in\PGamma$, the
  limit points of $J$ at $p$ are among the limit points of
  $\jj^+,\jj^-,-\jj^+,-\jj^-$ at $p$, then $J$ is itself one of these
  four complex structures.
\end{enumerate}
\end{thm}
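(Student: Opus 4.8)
The plan is to move everything into the twistor space $\cp^3$ and exploit the identification, from Proposition~\ref{Nullst}, of $\sK$ as the only algebraic surface through $\pi^{-1}(\pgamma)$, together with the fibrewise structure of $\pi|_\sK$ recorded in Theorems~\ref{bir} and~\ref{disc}. Recall that an OCS $J$ on an open set $\Omega\subseteq\hh$ is the same thing as a holomorphic section of $\pi\colon\cp^3\to S^4$ over $\Omega$, i.e.\ a complex surface $\Sigma\subset\pi^{-1}(\Omega)$ on which $\pi$ restricts to a biholomorphism onto $\Omega$; both assertions are thus statements about such surfaces. We shall lean on: $\sK$ contains the graphs of all four structures $\jj^+,\jj^-,-\jj^+,-\jj^-$ over $\hh\setminus(\pgamma\cup\PGamma)$ (Proposition~\ref{extends} and \S\ref{squartic}); the singular locus and cusp locus of $\sK$ project into $L_i\cup\{\infty\}$ (Theorem~\ref{bir}); and over every point of $\pGamma$ the fibre of $\pi$ meets $\sK$ only at singular points of $\sK$ or at points where it is tangent to $\sK$, that is, critical points of $\pi|_\sK$ (Theorem~\ref{disc}).

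For (1), let $\Sigma$ be the graph of a putative OCS $J$ on $\hh\setminus\pgamma$ and $X=\overline\Sigma\subset\cp^3$. I expect a dichotomy along $\pgamma$, exactly as in \cite[Theorem~3.10]{viaclovsky}: either $J$ extends holomorphically across $\pgamma$ away from a set $\Lambda$ with $\mathcal{H}^1(\Lambda)=0$, or the locus on $\pgamma$ along which the section fails to remain single-valued contains an arc. In the first case Theorem~\ref{Hau} forces $J$ to be conformally standard, hence maximally defined on $\hh$ or on $\hh\setminus\{p\}$, so $\hh\setminus\pgamma$ is not its maximal domain. In the second case $X$ contains the line $F(\ell_x)=\pi^{-1}(x^2+ix)$ for $x$ in an interval; any polynomial vanishing on $X$ then vanishes on all these lines, hence identically in $x$, hence on the quadric $\sQ$, so by the proof of Proposition~\ref{Nullst} it is divisible by $K$ and $X\supseteq\sK$; as $X$ is a surface and $\sK$ is irreducible, $X=\sK$ and $\Sigma\subseteq\sK$. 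But $\pGamma\subset\hh\setminus\pgamma$, and over each point of $\pGamma$ every point of $\sK$ is a singular point of $\sK$ or a critical point of $\pi|_\sK$, so $\pi|_\Sigma$ cannot be a local biholomorphism there -- a contradiction. This proves (1).

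For (2), let $\Sigma$ be the graph of $J$ on $\hh\setminus(\pgamma\cup\PGamma)$, satisfying the limit-point hypothesis on $\PGamma$. Running the same dichotomy along $\pgamma$: the first alternative would again make $J$ conformally standard, but a constant OCS has a single limit point at each interior point of $\PGamma$, whereas $\nabla\jj^\pm$ is unbounded on $\PGamma$ so $\pm\jj^\pm$ have a whole circle of limit points there; the hypothesis therefore excludes this alternative, and the second gives $X\supseteq\sK$ as before. On the other hand the hypothesis says the limit points of $\Sigma$ over $\PGamma$ lie among those of $\pm\jj^\pm$, all of whose graphs sit inside the closed set $\sK$; hence $X$ meets $\pi^{-1}(\PGamma)$ only within $\sK$. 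Combining these, $X=\sK$ and $\Sigma\subseteq\sK$. Thus $\Sigma$ is a connected section contained in $\sK$, hence a single sheet of the genuine four-to-one covering $\sK\to\hh$ over the connected open set $\hh\setminus(\pgamma\cup\PGamma\cup L_i)$, whose four sheets are the pairwise distinct structures $\jj^+,\jj^-,-\jj^+,-\jj^-$ by Theorem~\ref{JJJJ}(1); therefore $J$ coincides with one of these on a dense open set and so, being real-analytic, everywhere.

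The step I expect to be the real obstacle is the boundary analysis feeding the dichotomy, and the use of the hypothesis in~(2): making rigorous that a holomorphic section over $\hh\setminus\pgamma$ either extends across $\pgamma$ off an $\mathcal{H}^1$-null set or has closure swallowing a one-parameter family of twistor fibres $F(\ell_x)$ (so that $X$ is algebraic and Proposition~\ref{Nullst} bites), and that the limit condition genuinely confines $X$ over $\PGamma$ to $\sK$. This rests on removable-singularity results for bounded holomorphic sections of the twistor fibration and on the real-analyticity of the locus where the section degenerates, and is where the proof most closely imitates \cite{viaclovsky}; the quaternionic computations of \S\ref{removing} (Theorems~\ref{2to1} and~\ref{JJJJ}) provide the complementary verification that the four sheets of $\sK$ really are $\pm\jj^\pm$ and that $\PGamma$ is the locus of their degeneration.
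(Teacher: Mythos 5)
Your overall strategy (work with the graph in $\cp^3$, use $\sK$, and reduce to the four branches $\pm\jj^\pm$) is the right one, but the engine of your argument --- the dichotomy along $\pgamma$, namely that a holomorphic section over $\hh\setminus\pgamma$ either extends across $\pgamma$ off a set of zero $\mathcal{H}^1$-measure or has closure containing the twistor fibres $\pi^{-1}(x^2+ix)$ for an arc of $x\in\rr$ --- is nowhere proved, is not a standard removable-singularity statement, and is in fact the entire difficulty; you say as much yourself. The paper never needs such a dichotomy. Its proof sets $A=\sS\setminus(\sS\cap\sK)$, observes that once $\sS\subset\sK$ is excluded (part 1: impossible because $\sK$ fails transversality to the fibres over $\pGamma$ while $J$ is smooth there --- this is the one step your sketch reproduces) or treated separately (part 2: this case \emph{is} the desired conclusion $J=\pm\jj^\pm$), the intersection $\sS\cap\sK$ has complex dimension at most $1$, so $\overline A\cap\sK$ lies in a union of real submanifolds of dimension at most $3$ and has zero $4$-dimensional Hausdorff measure; Bishop's lemma then extends $A$ across $\sK$, giving that $\overline A=\overline\sS$ is analytic in all of $\cp^3$, hence algebraic by Chow/Mumford, and of \emph{degree one} since it meets a generic twistor line in a single point --- so $J$ is conformally constant, contradicting maximality in part 1 and the limit hypothesis over $\PGamma$ in part 2. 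Your sketch replaces this Bishop--Chow--degree argument with an appeal to Theorem~\ref{Hau} in one branch and an unproved algebraicity claim in the other, so the conclusion is not reached.

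Moreover, your second branch is internally circular and, as stated, incorrect. The Proposition~\ref{Nullst}-style step ("every polynomial vanishing on $X$ is divisible by $K$, hence $X\supseteq\sK$") presupposes that $X=\overline\Sigma$ is an algebraic variety cut out by its vanishing ideal; without first establishing analyticity of the closure (Bishop) and then algebraicity (Chow), the vanishing ideal could well be zero and the step yields nothing. Worse, even granting $X\supseteq\sK$, the inference "$X=\sK$, hence $\Sigma\subseteq\sK$" fails: since $\Sigma$ is the graph of a continuous section it is already closed in $\pi^{-1}(\hh\setminus\pgamma)$, so $\overline\Sigma\supseteq\sK$ would force $\sK\cap\pi^{-1}(\hh\setminus\pgamma)\subseteq\Sigma$, which is impossible because $\pi|_{\sK}$ is generically $4{:}1$ while $\Sigma$ meets each fibre once. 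The containment you actually want, $\sS\subset\sK$, must come the other way round --- in the paper it follows from the identity principle (if the holomorphic function $K$ vanishes on an open subset of the graph it vanishes on all of it), and in part 2 it is exactly the case that produces $J\in\{\jj^+,\jj^-,-\jj^+,-\jj^-\}$, where your final identification of the four sheets over $\hh\setminus(\pgamma\cup\PGamma\cup L_i)$ via Theorem~\ref{JJJJ} is fine. As it stands, then, the proposal has a genuine gap at its central analytic step and a flawed deduction in the branch meant to replace it.
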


\begin{proof} 
  1. First suppose that $J$ is an OCS defined on
  $\hh\setminus\pgamma$. Let $\sS$ be the graph of $J$, which is a
  complex submanifold of $\pi^{-1}(\hh\setminus\pgamma)$ by
  \cite{Salamon1985,deBartolomeisNannicini}. If the holomorphic
  function $K$ vanishes on an open subset of $\sS$ then it vanishes
  identically, and $\sS\subset\sK$. This is not possible because $\sK$
  is not transversal to the twistor fibres over $\pGamma$, while $J$
  is smooth over $\pGamma$. It follows that $\sS\cap\sK$ has complex
  dimension at most $1$ at each point.

Now consider the set \begin{equation}\label{A}
 A=\sS\cap(\cp^3\setminus\sK)=\sS\setminus(\sS\cap\sK).
\end{equation}
Since $\sS$ is a complex analytic subset of
$\pi^{-1}(\hh\setminus\gamma)$, $A$ is a complex analytic subset of
the smaller open subset $\cp^3\setminus\sK$. We can therefore apply
Bishop's lemma \cite[Lemma~9]{Bishop} in an attempt to extend $A$
across the algebraic variety $B=\sK$ of complex dimension $k$ with
$k=2$. In order to do this we need to know that the $2k$-dimensional
Hausdorff measure of $\overline A\cap\sK$ is zero, where $\overline A$
is the closure of $A$ in $\cp^3$. But $\overline A\cap\sK$ is a
  subset of \[(\sS\cap\sK)\cup\pi^{-1}(\gamma)\cup\pi^{-1}(\infty),\]
  which is a union of real submanifolds of dimension less than and
  equal to $3$.

  The extension is therefore possible, and we conclude that $\overline
  A$ is complex analytic. Moreover, $\overline A =\overline\sS$.
  Indeed, for all $p \in \overline\sS$ and for all neighbourhoods $U$
  of $p$ in $\cp^3$, the fact that $U \cap \sS \ne\varnothing$ implies
  that $U \cap A = (U \cap \sS) \setminus (\sS \cap
  \sK)\ne\varnothing$ because $\sS\cap\sK$ has complex dimension at
  most $1$. Since $\overline \sS$ is complex analytic, it follows
  from standard results (such as Mumford's treatment \cite{Mumford})
  that $\overline\sS$ is algebraic. However, it then has degree one
  since it meets a generic line in just one point, so $J$ must be
  conformally constant and extend at least to $\hh$ minus a
  point.\smallbreak

  2. Now suppose that $J$ is an OCS defined on
  $O=\hh\setminus(\gamma\cup\PGamma)$, with the stated limiting
  property over $\PGamma$. The graph $\sS$ of $J$ is a complex
  submanifold of $\pi^{-1}(O)$. If $\sS\cap\sK$ has complex dimension
  $2$ at some point then $\sS\subset\sK$, and $\sS$ must coincide with
  one of the smooth branches of $\sK$ over $O$, namely the graph of
  one of $\jj^+,\jj^-,-\jj^+,-\jj^-$. We can therefore assume that
  $\sS\cap\sK$ has complex dimension at most $1$ at each point.
  
Define $A$ as in \eqref{A}. Then $\overline A\cap\sK$,
a subset of
\[ (\pi^{-1}(\PGamma)\cap\sK)\cup(\sS\cap\sK)\cup\pi^{-1}(\gamma)
\cup\pi^{-1}(\infty),\] again has $4$-dimensional Hausdorff measure
equal to zero. Indeed, $\pi^{-1}(\PGamma)\cap\sK$ has real dimension
$3$ in view of Theorem \ref{disc}. The fact that $A$ is an analytic
subset of $\cp^3\setminus\sK$ follows because $\sS$ is analytic in
$$\cp^3 \setminus (\pi^{-1}(\PGamma) \cup \pi^{-1}(\gamma) \cup\pi^{-1}(\infty))$$
and $A = \sS \setminus \sK$ intersects neither $\pi^{-1}(\gamma)
\cup\pi^{-1}(\infty)$ (as before) nor $\pi^{-1}(\PGamma)$ (thanks to
the hypothesis on the limit points). Applying Bishop's lemma, we
deduce that $\overline A=\overline\sS$ is analytic. Arguing as above,
$J$ must be conformally constant, but such a $J$ cannot coincide with
$\pm\jj^\pm$ over $\PGamma$.
\end{proof}

To sum up, Proposition~\ref{Nullst} and Theorem~\ref{disc} imply
that the paraboloid $\pGamma$ is determined algebraically by the
parabola $\pgamma$. The vertex of $\pGamma$ is the focus of $\pgamma$,
and the two points of $\sK$ lying above this vertex are cusps.
  On the other hand, $\pi^{-1}(\pGamma)$ is not a complex submanifold
  of $\sK$ because the 2-plane $N_f=f^{-1}(\pGamma)$ is not
  $\jj$-holomorphic, see \eqref{oid}.

The subsets $\pgamma$, $L_i$ and $\pGamma$ of $\hh$ (together with the
point at infinity) constitute the discriminant locus of $\sK$, and no
OCS whose graph lies in $\sK$ can be smoothly extended over $\gamma$
and $\pGamma$. Theorem~\ref{nonex} tells us that such an OCS is
characterized analytically by its limiting values over a 3-dimensional
region, such as $\PGamma$, with boundary $\pGamma$.

\bigskip
\footnotesize
\noindent\textit{Acknowledgments.} Graziano Gentili and Caterina Stoppato 
are partially supported by GNSAGA of INdAM and by the projects FIRB 2008 
``Geometria Differenziale Complessa e Dinamica Olomorfa'' and PRIN 2010-2011 
``Variet\`a reali e complesse: geometria, topologia e analisi armonica''. Caterina 
Stoppato also acknowledges support by FSE, Regione Lombardia and FIRB 
2012 ``Differential Geometry and Geometric Function Theory''.

\medskip
\normalsize

\bibliography{}

\enddocument